\numberwithin{equation}{section}
\numberwithin{figure}{section}
\newcommand{\lyxaddress}[1]{
\par {\raggedright #1
\vspace{1.4em}
\noindent\par}
}
\theoremstyle{plain}
\newtheorem{thm}{\protect\theoremname}[section]
  \theoremstyle{plain}
  \newtheorem{lem}[thm]{\protect\lemmaname}
  \theoremstyle{remark}
  \newtheorem{rem}[thm]{\protect\remarkname}
  \theoremstyle{plain}
  \newtheorem{prop}[thm]{\protect\propositionname}
  \theoremstyle{definition}
  \newtheorem{defn}[thm]{\protect\definitionname}
  \theoremstyle{plain}
  \newtheorem{fact}[thm]{\protect\factname}
  \theoremstyle{plain}
  \newtheorem{cor}[thm]{\protect\corollaryname}
\newcommand{\R}{\mathbb{R}}
\newcommand{\p}{\mathbb{P}}
\newcommand{\E}{\mathbb{E}}
\newcommand{\q}{\mathbb{Q}}
\newcommand{\var}{\mathrm{Var}}
\newcommand{\cov}{\mathrm{Cov}}
\newcommand{\N}{\mathbb{N}}
\newcommand{\proba}{{\mathcal P}}
\newcommand{\meas}{{\mathcal M}^{+}}
\newcommand{\1}{\mathbbm{1}}
\newcommand{\cV}{\mathcal{V}}
\newcommand{\cW}{\mathcal{W}}
\newcommand{\tdR}{\widetilde{R}^\Delta}
\newcommand{\preD}{\underset{\Delta}{\preceq}}
\newcommand{\preDc}{\underset{\Delta,c}{\preceq}}
\newcommand{\convn}{\underset{n\rightarrow +\infty}{\longrightarrow}}
\newtheorem{hypothesis}{Hypothesis}
\date{\today}
  \providecommand{\corollaryname}{Corollary}
  \providecommand{\definitionname}{Definition}
  \providecommand{\factname}{Fact}
  \providecommand{\lemmaname}{Lemma}
  \providecommand{\propositionname}{Proposition}
  \providecommand{\remarkname}{Remark}
\providecommand{\theoremname}{Theorem}
\begin{document}

\title{Stability of the optimal filter in continuous time: beyond the Bene\v{s}
filter}

\author{Bui, Van Bien \& Rubenthaler, Sylvain}
\maketitle

\lyxaddress{Laboratoire J. A. Dieudonné, Université de Nice Sophia Antipolis,
Parc Valrose, 06108 Nice cedex 2, FRANCE (\texttt{bienmaths@gmail.com},
\texttt{rubentha@unice.fr})}
\begin{abstract}
We are interested in the optimal filter in a continuous time setting.
We want to show that the optimal filter is stable with respect to
its initial condition. We reduce the problem to a discrete time setting
and apply truncation techniques coming from \cite{oudjane-rubenthaler-2005}.
Due to the continuous time setting, we need a new technique to solve
the problem. In the end, we show that the forgetting rate is at least
a power of the time $t$. The results can be re-used to prove the
stability in time of a numerical approximation of the optimal filter. 
\end{abstract}
filtering, signal detection, inference from stochastic processes.

\section{Introduction\label{sec:Introduction}}

\subsection{Exposition of the problem}

We are given a probability space $(\Omega,\mathcal{F},\p)$. We are
interested in the processes $(X_{t})_{t\geq0}$ and $(Y_{t})_{t\geq0}$,
solutions of the following SDE's in $\R$
\[
X_{t}=X_{0}+\int_{0}^{t}f(X_{s})ds+V_{t}\,,
\]
\[
Y_{t}=\int_{0}^{t}hX_{s}ds+W_{t}\,\text{(}h\neq0\text{)}\,,\,
\]
~where $V$, $W$ are two independent standard Brownian motions,
$X_{0}$ is a random variable in $\R$, of law $\pi_{0}$. We set
$(\mathcal{F}_{t})_{t\geq0}$ to be the filtration associated to $(V_{t},W_{t})$.
For $t\geq0$, we call optimal filter at time $t$ the law of $X_{t}$
knowing $(Y_{s})_{0\leq s\leq t}$, and we denote this law by $\pi_{t}$.
Let $\tau>1$, this parameter will be adjusted later. For any $t>0$,
we set $Q_{t}$ to be the transition kernel of the Markov chain $(X_{kt})_{k\geq0}$.
We set $Q=Q_{\tau}$. 

Before going further, we mention that we believe the results we have
could be transposed to processes in $\R^{d}$ for any $d$. This would
result in a more technical paper and we chose not to pursue this goal. 

\begin{hypothesis} \label{hyp:de-base} We suppose that $f$ is $C^{1}$
and that $\Vert f\Vert_{\infty}$, $\Vert f'\Vert_{\infty}$ are bounded
by a constant $M$. \end{hypothesis}

\begin{hypothesis}\label{hyp:sur-pi_0}We suppose that there exists
positive constants $v_{1}$, $v_{2}$ such that, for all nonnegative
$\Delta$, 
\[
\pi_{0}([-\Delta,\Delta]^{\complement})\leq v_{1}e^{-v_{2}\Delta^{2}}\,.
\]
 (This means that the tail of $\pi_{0}$ is sub-Gaussian.)

\end{hypothesis}

We are interested in the stability of $(\pi_{t})_{t\geq0}$ with respect
to its initial condition. As explained below in Equation (\ref{eq:kallianpur-striebel-01}),
for all $t$, $\pi_{t}$ can be written as a functional of $(Y_{s})_{0\leq s\leq t}$
and $\pi_{0}$. Suppose now we plug a probability $\pi_{0}'$ instead
of $\pi_{0}$ into this functional, we then obtain what is called
a ``wrongly initialized filter'' $\pi'_{t}$ (Equation (\ref{eq:kallianpur-striebel-02})).
It is natural to wonder whether $\pi_{t}-\pi'_{t}$ goes to zero when
$t$ goes to infinity in any sense, and at which rate it does so.
We would then say that the filter $(\pi_{t})$ is stable with respect
to its initial condition. We give such a result with a rate of convergence,
in Theorem \ref{thm:stabilite}, under Hypothesis \ref{hyp:de-base}
and \ref{hyp:sur-pi_0}. This question has been answered for more
general processes $(X_{t})$ and $(Y_{t})$ evolving in continuous
time. Here is a brief review and the existing results. We stress the
differences with our setting (see \cite{crisan-rozovskii-2011} for
other references).
\begin{itemize}
\item There is a proof of stability with respect to the initial condition
in \cite{atar-zeitouni-1997}. In this paper, the process $(X_{t})$
has to stay in a compact space. The rate is exponential. 
\item There are stability results in \cite{atar-1998}, \cite{kleptsyna-veretennikov-2008}.
There, the process $(X_{t})$ has to satisfy some ergodicity conditions.
The rate is exponential. 
\item In \cite{baxendale-chigansky-liptser-2004,chigansky-van-handel-2007},
the process $(X_{t})$ has to take values in a finite state space. 
\item The article \cite{stannat-2005} is the closest to our results. There
are some assumptions on the drift coefficient $f(\dots)$ (see Section
1.3 of \cite{stannat-2005} , $f$ would have to be of the form $\varphi'/\varphi$,
$\varphi\in\mathcal{C}^{2}(\R)$), which are different from ours.
The rate is exponential.
\end{itemize}
In the case of an exponential rate, the filter would be called ``exponentially
stable'' (with respect to its initial condition). The widespread
idea is that exponential stability induces that a numerical approximation
of the optimal filter would not deteriorate in time. Such an approximation
is usually based on a time-recursive computation and it is believed
that exponential stability will prevent an accumulation of errors.
In order to use a stability result in a proof concerning a numerical
scheme, one might like the distance between $\pi_{t}$ and $\pi'_{t}$
to be expressed in term of the distance between $\pi_{0}$ and $\pi_{0}'$,
and there is no such result, at least when the time is continuous.
We do not prove such a result in this paper. We explain below that
there is another way to prove that a numerical approximation of the
optimal filter does not deteriorate in time.

Again, our aim in this paper is to show stability in such a way that
the results can be used in a proof that a numerical scheme remains
good uniformly in time. \uline{This has not been done in the literature
we cited above}. We follow \cite{oudjane-rubenthaler-2005} by introducing
a ``robust filter'' restricted to compact spaces. We show that this
filter remains close to the optimal filter uniformly in time and this
is enough to prove the stability of the optimal filter with respect
to its initial condition. As in \cite{oudjane-rubenthaler-2005},
we do not show that the optimal filter is exponentially stable, nor
can we write the dependency in $\pi_{0}$, $\pi_{0}'$ in the stability
result. However, in a future work, we will use the stability properties
of the robust filter to show that there exists a numerical approximation
that remains uniformly good in time.

In the case where  $f$ satisfies 
\begin{equation}
f'(x)+f(x)^{2}+h^{2}x^{2}=P(x)\,,\label{eq:Benes}
\end{equation}
where $P(x)$ is a second-order polynomial with positive leading-order
coefficient, then $\pi_{t}$ is called the Bene\v{s} filter (see
\cite{benes-1981}, \cite{bain-crisan-2009}) and there exists an
explicit formula for the density of $\pi_{t}$, for all $t$. The
study of the Bene\v{s} filter is developed in \cite{ocone-1999}.
Under Hypothesis \ref{hyp:de-base}, we have (\ref{eq:Benes}) if
and only if $f(x)=f(0)(1+xf(0))^{-1}$. What we present here is a
case in the neighborhood of the Bene\v{s} filter.

We present the main results and our strategy in Section \ref{subsec:Main-results-and}
below. Before this, we have to go through some definitions.

For all $t>0$, the law of $Y_{0:t}$ under $\p$ and conditionally
on $X_{0}$, $X_{t}$ has a density denoted by~$y_{0:t}\mapsto\psi_{t}(y_{0:t},X_{0},X_{\tau})$
with respect to the Wiener measure (see Lemma \ref{lem:encadrement-potentiel}
below). The Kallianpur-Striebel formula (see \cite{bain-crisan-2009},
p. 57) gives us the following result (see the proof in Section \ref{subsec:Proofs-of-Section-1}).
\begin{lem}
\label{lem:kallianpur-striebel}For all $t>0$ and all bounded continuous
$\varphi$,
\begin{equation}
\pi_{t}(\varphi)=\frac{\int_{\R}\varphi(y)Q_{t}(x,dy)\psi_{t}(Y_{0:t},x,y)\pi_{0}(dx)}{\int_{\R}Q_{t}(x,dy)\psi_{t}(Y_{0:t},x,y)\pi_{0}(dx)}\,.\label{eq:kallianpur-striebel-01}
\end{equation}
\end{lem}

For any probability law $\pi_{0}'$ on $\R$, we define the wrongly
initialized filter (with initial condition $\pi_{0}'$) by, for any
$t>0$,
\begin{equation}
\pi_{t}'(\varphi)=\frac{\int_{\R}\varphi(y)Q_{t}(x,dy)\psi_{t}(Y_{0:t},x,y)\pi_{0}'(dx)}{\int_{\R}Q_{t}(x,dy)\psi_{t}(Y_{0:t},x,y)\pi_{0}'(dx)}\,.\label{eq:kallianpur-striebel-02}
\end{equation}

\subsection{Main results and outline of the proofs\label{subsec:Main-results-and}}

All the results are given under Hypothesis \ref{hyp:de-base} and
\ref{hyp:sur-pi_0}. The first main result is the following ($\Vert\dots\Vert$
is the total variation norm when applied to measures).
\begin{thm}
\label{thm:stabilite} There exists $\nu_{0}>0$ such that, if $\pi_{0}$
and $\pi_{0}'$ are comparable,
\[
\E(\Vert\pi_{t}-\pi'_{t}\Vert)=O(t^{-\nu_{0}})\,,
\]
when $t\rightarrow+\infty$ (see Section \ref{subsec:Notations} for
the definition of comparable).
\end{thm}

\begin{rem}
We have here a polynomial stability whereas filters in discrete time
are usually exponentially stable. We believe this result is not sharp
(mainly because the proof is very convoluted).
\end{rem}

In order to prove this Theorem, we first reduce the problem to a problem
in discrete time by looking at the sequences $(\pi_{0},\pi_{\tau},\pi_{2\tau},\dots)$,
$(\pi'_{0},\pi'_{\tau},\pi'_{2\tau},\dots)$. We then introduce a
truncated filter $(\pi_{k\tau}^{\Delta})_{k\geq0}$ and a wrongly
initialized truncated filter $((\pi')_{k\tau}^{\Delta})_{k\geq0}$,
where $\Delta$ is a truncation parameter. We do not get into more
details since to get a closed definition of the truncated filter,
one would have to look at Equations (\ref{eq:def-truncated-filter}),
(\ref{eq:def-R-Delta}), (\ref{eq:def-operateur-normalise}), (\ref{eq:def-D_k}),
(\ref{eq:def-compact-suite}), (\ref{eq:def-m_k}), (\ref{eq:def-xi_1}),
(\ref{eq:def-p}), (\ref{eq:def-Cz2}), (\ref{eq:A2}), (\ref{eq:def-C1}),
(\ref{eq:def-alpha-etc}), (\ref{eq:def-a-etc}), (\ref{eq:def-lambdas}),
(\ref{eq:def-theta}), (\ref{eq:def-sigma1}), (\ref{eq:def-sigma2})
and Lemma \ref{lem:calcul-var}. The truncated filter can be viewed
as a restriction of $(\pi_{k\tau})_{k\geq0}$ to some compacts. Equation
(\ref{eq:def-truncated-filter}) tells us that 
\[
\pi_{k\tau}^{\Delta}=\overline{R}_{k}^{\Delta}\overline{R}_{k-1}^{\Delta}\dots\overline{R}_{1}^{\Delta}(\pi_{0})\,\text{,}\,\text{for all }k\geq1\,,
\]
for some operators $\overline{R}_{\dots}^{\Delta}.$ We define
\[
(\pi')_{k\tau}^{\Delta}=\overline{R}_{k}^{\Delta}\overline{R}_{k-1}^{\Delta}\dots\overline{R}_{1}^{\Delta}(\pi_{0}')\,\text{,}\,\text{for all }k\geq1\,.
\]
We first prove that the operators $\overline{R}_{\dots}^{\Delta}$
are contracting (see Section \ref{sec:New-formula-for}, Equations
(\ref{eq:def-epsilon}), (\ref{eq:def-epsilon-prime}) for the definitions
of $\epsilon_{\dots}$, $\epsilon'_{\dots}$).
\begin{lem}
\label{lem:synthese-des-resultats}We suppose that $\tau$ and $h$
are chose such that Equation (\ref{eq:borne-theta}) holds. For $n\geq1$
and $k$ in $\{1,2,\dots,n-1\}$, for all $\mu$, $\mu'$ in $\mathcal{P}(\R)$,
\begin{multline*}
\left\Vert \overline{R}_{n}^{\Delta}\overline{R}_{n-1}^{\Delta}\dots\overline{R}_{k+1}^{\Delta}(\mu)-\overline{R}_{n}^{\Delta}\overline{R}_{n-1}^{\Delta}\dots\overline{R}_{k+1}^{\Delta}(\mu')\right\Vert \\
\leq\prod_{i=1}^{\lfloor(n-k)/2\rfloor}(1-\epsilon_{k+2i-1}^{2}(\epsilon'_{k+2i})^{2})\times4\inf\left(1,\frac{\Vert\mu-\mu'\Vert}{(\epsilon'_{k+2})^{2}\epsilon_{k+1}^{4}}\right)\,.
\end{multline*}
\end{lem}

We then write a telescopic sum (Equation (\ref{eq:somme-telescopique-01}))
for the optimal filter
\[
\Vert\pi_{n\tau}-\pi_{n\tau}^{\Delta}\Vert\leq\Vert\pi_{n\tau}-\overline{R}_{n}^{\Delta}(\pi_{(n-1)\tau})\Vert+\sum_{1\leq k\leq n-1}\Vert\overline{R}_{n:k+1}^{\Delta}(\pi_{k\tau})-\overline{R}_{n:k+1}^{\Delta}(\overline{R}_{k}^{\Delta}(\pi_{(k-1)\tau}))\Vert
\]
where, for all $n\geq k$, $\overline{R}_{n:k+1}^{\Delta}=\overline{R}_{n}^{\Delta}\circ\overline{R}_{n-1}^{\Delta}\circ\dots\circ\overline{R}_{k+1}^{\Delta}$
is a composition of operators (and we write a similar sum for the
wrongly initialized filter). Due to the contractivity of the operators
$\overline{R}_{\dots}^{\Delta}$, we are then able to prove that the
truncation error is uniform in time (see Section \ref{subsec:Asymptotics}
for the definition of $\preceq$).
\begin{prop}
\label{prop:approx-par-filtre-robuste}There exist a function of $h$
, $\lambda_{1}'(h)$ and a constant $\tau_{\infty}$ such that, if
$\tau\geq\tau_{\infty}$, we have
\begin{equation}
\sup_{n\geq0}\log(\E(\Vert\pi_{n\tau}-\pi_{n\tau}^{\Delta}\Vert))\preDc-\Delta^{2}\times\lambda_{1}'(h)\,;\label{eq:approx-pi}
\end{equation}
and, if $\pi_{0}$ and $\pi_{0}'$ are comparable, 
\begin{equation}
\sup_{n\geq0}\log(\E(\Vert\pi'_{n\tau}-(\pi')_{n\tau}^{\Delta}\Vert))\preDc-\Delta^{2}\times\lambda_{1}'(h)\,.\label{eq:approx-pi-prime}
\end{equation}
 (See Section \ref{subsec:Notations} for a definition of comparable.)
\end{prop}

The proof of Theorem \ref{thm:stabilite} then start with the inequality
\[
\Vert\pi_{n\tau}-\pi'_{n\tau}\Vert\leq\Vert\pi_{n\tau}-\pi_{n\tau}^{\Delta}\Vert+\Vert\pi_{n\tau}^{\Delta}-(\pi')_{n\tau}^{\Delta}\Vert+\Vert(\pi')_{n\tau}^{\Delta}-\pi'_{n\tau}\Vert\,.
\]
We choose $\Delta$ as a function of $n$ to make sure all the terms
on the right go to zero when $n$ goes to infinity. After some technicalities,
we are able to get rid of the discrete time index $n\tau$ and write
a result for any $t\geq0$.

The second main result of the paper is that these results can be used
to show that a numerical approximation of the filter does not deteriorate
in time. Suppose we have a numerical approximation $\pi_{n\tau}^{N,\Delta}$
of $\pi_{n\tau}^{\Delta}$ for all $n\geq0$ (with $\pi_{0}^{N,\Delta}=\pi_{0}^{\Delta}$)
; $\Delta$ is again the truncation parameter and $N$ is the approximation
parameter (one can think of a number of particles in case of a particle
filter approximation). We write
\[
\Vert\pi_{n\tau}-\pi_{n\tau}^{N,\Delta}\Vert\leq\Vert\pi_{n\tau}-\pi_{n\tau}^{\Delta}\Vert+\Vert\pi_{n\tau}^{\Delta}-\pi_{n\tau}^{N,\Delta}\Vert\,,\,\text{for all }n\geq0\,.
\]
We already have the result that $\Vert\pi_{n\tau}-\pi_{n\tau}^{\Delta}\Vert$
is small when $\Delta$ goes to infinity (Proposition \ref{prop:approx-par-filtre-robuste}).
For the second term, we write a telescopic sum
\[
\Vert\pi_{n\tau}^{N,\Delta}-\pi_{n\tau}^{\Delta}\Vert\leq\Vert\pi_{n\tau}^{N,\Delta}-\overline{R}_{n}^{\Delta}(\pi_{(n-1)\tau}^{N,\Delta})\Vert+\sum_{1\leq k\leq n-1}\Vert\overline{R}_{n:k+1}^{\Delta}(\pi_{k\tau}^{N,\Delta})-\overline{R}_{n:k+1}^{\Delta}(\overline{R}_{k}^{\Delta}(\pi_{(k-1)\tau}^{N,\Delta}))\Vert\,.
\]
Using Lemma \ref{lem:synthese-des-resultats} again, we would be able
to show that $\Vert\pi_{n\tau}^{N,\Delta}-\pi_{n\tau}^{\Delta}\Vert$
remains small uniformly in $n$ (when $N$ is big). Choosing $\Delta$
as a function of $N$ \_ in the same way as in \cite{oudjane-rubenthaler-2005}
\_ would then allow to show that $\Vert\pi_{n\tau}-\pi_{n\tau}^{N,\Delta}\Vert$
remains small uniformly in the time $n$, when $N$ is big. 

The fact that stability results have the potential to be used to show
that a numerical approximation does not deteriorate in time is a novelty
compared to the other papers in the area.

Another novelty lies in the proof of Lemma \ref{lem:synthese-des-resultats}.
In Proposition \ref{prop:representation-avec-U}, we exhibit a representation
of the truncated filter $\overline{R}_{n:1}^{\Delta}(\pi_{0})$ as
the $n$-th point of a Feynman-Kac sequence based on a mixing Markov
kernel (see Section \ref{sec:Definitions-and-useful} for the definition
of the term Feynman-Kac sequence). This is not straightforward because
the observations are given as a process in continuous time. Whatever
transformation we adopt, the problem remains more difficult than in
discrete time. We will write more comments about the novelty of the
proof in Section \ref{sec:New-formula-for} (it requires more notations
to develop on the subject).

The outline of the paper is as follows. In Section \ref{sec:Computation-of},
we compute useful estimates concerning $\psi_{t}$. In Section 3,
we recall useful notions on filtering. In Section 4 and 5, we introduce
the truncated filter and its properties. At the beginning of Section
5, we elaborate on our strategy. In Section 6, we prove that the optimal
filter can be approximated by robust filters uniformly in time (Proposition
\ref{prop:approx-par-filtre-robuste}), and that the optimal filter
is stable with respect to its initial condition (Theorem \ref{thm:stabilite}).

\section{\label{sec:Computation-of}Computations around $\psi_{t}$}

\subsection{Estimation of the transition density and the likelihood}

We begin by bounding the transition density by above and below (see
the proof in Section \ref{subsec:Proofs-of-Section-1}). 
\begin{lem}
\label{lem:encadrement-transition}For all $x,y\in\R$, $Q(x,dy)$
has a density $Q(x,y)$ with respect to the Lebesgue measure and
\[
\frac{e^{-\frac{(y-x)^{2}}{2\tau}}}{\sqrt{2\pi\tau}}e^{-M|y-x|-\tau\left(\frac{M}{2}+\frac{M^{2}}{2}\right)}\leq Q(x,y)\leq\frac{\exp^{-\frac{(y-x)^{2}}{2\tau}}}{\sqrt{2\pi\tau}}e^{M|y-x|+\frac{M\tau}{2}}\,.
\]
\end{lem}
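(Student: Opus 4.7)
The plan is to read off the transition density directly from the two-sided bound on $\E(\varphi(X_\tau))$ derived just above the statement. The decisive observation is that, by the very definition of $\widehat V$, we have the pathwise identity
\[
X_t - X_0 = V_t + \int_0^t f(X_s)\,ds = \widehat V_t,
\]
so under $\widetilde{\p}$, where $\widehat V$ is a standard Brownian motion by Girsanov's theorem, $X_\tau - X_0$ is centered Gaussian of variance $\tau$ and independent of $X_0$. Independence and the preservation of the initial law come from the fact that the Radon--Nikodym density is $\mathcal F_\tau$-measurable and reduces to $1$ on $\mathcal F_0$, so $\widetilde{\p}$ and $\p$ coincide on $\mathcal F_0$.

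First I would condition the two displayed inequalities for $\E(\varphi(X_\tau))$ on $X_0 = x$. Both bounding expressions depend on the trajectory only through $x$ and $\widehat V_\tau = X_\tau - X_0$, so the conditional versions hold: for every bounded nonnegative $\varphi$ and every $x\in\R$,
\[
\E^{\widetilde{\p}}\bigl(\varphi(x+\widehat V_\tau)\,e^{-M|\widehat V_\tau|-\tau(M+M^2)/2}\bigr)\le \int\varphi(y)\,Q(x,dy)\le \E^{\widetilde{\p}}\bigl(\varphi(x+\widehat V_\tau)\,e^{M|\widehat V_\tau|+M\tau/2}\bigr).
\]

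Second, I would make the two bounding expectations explicit. Under $\widetilde{\p}$, $\widehat V_\tau$ is $\mathcal N(0,\tau)$, so the change of variable $y = x+\widehat V_\tau$ turns each side into an integral of $\varphi(y)$ against the Gaussian density $\frac{e^{-(y-x)^2/(2\tau)}}{\sqrt{2\pi\tau}}$ multiplied respectively by $e^{-M|y-x|-\tau(M/2+M^2/2)}$ and $e^{M|y-x|+M\tau/2}$. Since the resulting chain of inequalities holds for every bounded nonnegative $\varphi$, localizing $\varphi$ in a shrinking interval around an arbitrary point shows that $Q(x,\cdot)$ admits a Lebesgue density $Q(x,y)$ sandwiched precisely between the two announced expressions.

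There is no real obstacle here: the argument is essentially a repackaging of the Girsanov computation already carried out, combined with the identity $X_t-X_0=\widehat V_t$, which makes the law of $X_\tau$ under $\widetilde{\p}$ Gaussian with explicit parameters. The only subtle point is checking that the change of measure preserves the law of $X_0$ (and hence that conditioning on $X_0=x$ is harmless), which is immediate from $d\p/d\widetilde{\p}|_{\mathcal F_0} = 1$.
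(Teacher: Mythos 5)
Your proposal is correct and follows essentially the same route as the paper: the Girsanov change of measure to $\widetilde{\p}$, the two-sided bound on $\left.\frac{d\p}{d\widetilde{\p}}\right|_{\mathcal{F}_{\tau}}$ in terms of $|X_{\tau}-X_{0}|$, and the fact that $X_{\tau}-X_{0}=\widehat{V}_{\tau}$ is $\mathcal{N}(0,\tau)$ and independent of $X_{0}$ under $\widetilde{\p}$. Your explicit remarks on conditioning on $X_{0}=x$ and on $\widetilde{\p}=\p$ on $\mathcal{F}_{0}$ merely spell out steps the paper leaves implicit.
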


Following~\cite{bain-crisan-2009} (Chapter 6, Section 6.1), we define
a new probability $\widehat{\p}$ by (for all $t\geq0$)
\[
\widehat{V}_{t}=V_{t}+\int_{0}^{t}f(X_{s})ds\,,
\]
 
\[
\left.\frac{d\p}{d\widehat{\p}}\right|_{\mathcal{F}_{t}}=\widehat{Z}_{t}=\exp\left(\int_{0}^{t}f(X_{s})d\widehat{V}_{s}-\frac{1}{2}\int_{0}^{t}f(X_{s})^{2}ds+\int_{0}^{t}hX_{s}dY_{s}-\frac{1}{2}\int_{0}^{t}h^{2}X_{s}^{2}ds\right)
\]
 We define, for all $0\leq s\leq t$, 
\[
Y_{s:t}=(Y_{u})_{s\leq u\leq t}\,.
\]
 We set 
\begin{equation}
\widehat{\psi}_{t}(y_{0:t},x_{0},x_{1})=\E^{\widehat{\p}}\left(\left.\exp\left(\int_{0}^{t}hX_{s}dY_{s}-\frac{1}{2}\int_{0}^{t}(hX_{s})^{2}ds\right)\right|X_{0}=x_{0},X_{t}=x_{1},Y_{0:t}=y_{0:t}\right)\,.\label{eq:def-psi-chapeau}
\end{equation}
We set
\[
\psi=\psi_{\tau}\,,\,\widehat{\psi}=\widehat{\psi}_{\tau}\,.
\]
We have the following bounds for the likelihood (see the proof in
Section \ref{subsec:Proofs-of-Section-1}).
\begin{lem}
\label{lem:encadrement-potentiel}For all $t>0$, the law of $Y_{0:t}$
under $\p$ and conditionally on $X_{0}$, $X_{t}$ has a density
denoted by~$y_{0:t}\mapsto\psi_{t}(y_{0:t},X_{0},X_{\tau})$ with
respect to the Wiener measure. This density satisfies, for all~$x$,~$z$~$\in\R$
and any continuous trajectory $y_{0:t}$ 
\[
\widehat{\psi}_{t}(y_{0:t},x,z)e^{-2M|z-x|-\tau\left(M+\frac{M^{2}}{2}\right)}\leq\psi_{t}(y_{0:t},x,z)\leq e^{2M|z-x|+\tau\left(M+\frac{M^{2}}{2}\right)}\widehat{\psi}_{t}(y_{0:t},x,z)\,.
\]
\end{lem}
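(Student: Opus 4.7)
The strategy is to assemble the pieces already laid out in the preamble. The Girsanov step under $\widehat{\p}$ turns $(\widehat{V},Y)$ into a standard two-dimensional Brownian motion, so under $\widehat{\p}$ the process $Y$ is independent of $X$ with law equal to Wiener measure; the Bayes/Kallianpur--Striebel ratio displayed before the lemma therefore does give a density of the conditional law of $Y_{0:t}$ given $(X_0,X_t)$ with respect to Wiener measure. The natural decomposition is $\left.\tfrac{d\p}{d\widehat{\p}}\right|_{\mathcal F_t}=S_t\cdot O_t$, where
\[
S_t=\exp\!\Big(F(X_t)-F(X_0)-\tfrac12\!\int_0^t f'(X_s)\,ds-\tfrac12\!\int_0^t f(X_s)^2\,ds\Big)
\]
depends only on the $X$-path and
\[
O_t=\exp\!\Big(\int_0^t hX_s\,dY_s-\tfrac12\!\int_0^t h^2X_s^2\,ds\Big)
\]
is the observation Dol\'eans--Dade exponential.

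With this in hand the rest is a pointwise bookkeeping step. From $\|f\|_\infty,\|f'\|_\infty\le M$ and $|F(X_t)-F(X_0)|\le M|X_t-X_0|$ one obtains
\[
e^{-M|X_t-X_0|-t(M+M^2)/2}\le S_t\le e^{M|X_t-X_0|+tM/2}\quad(\text{pathwise}).
\]
For the denominator, the identity $\E^{\widehat{\p}}(O_t\mid X)=1$ (since $O_t$ is a Dol\'eans--Dade martingale in $Y$ for a frozen $X$-path) reduces $\E^{\widehat{\p}}(\left.\tfrac{d\p}{d\widehat{\p}}\right|_{\mathcal F_t}\mid X_0,X_t)$ to $\E^{\widehat{\p}}(S_t\mid X_0,X_t)$, which the pathwise bound above sandwiches between the two constants shown in the preamble. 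For the numerator $\E^{\widehat{\p}}(S_tO_t\mid X_0{=}x,X_t{=}z,Y_{0:t}{=}y_{0:t})$ I insert the same pathwise bound on $S_t$, which is legitimate because it depends only on $|X_t-X_0|$ and $t$, hence is a constant under the conditioning; the remaining factor is $\E^{\widehat{\p}}(O_t\mid X_0{=}x,X_t{=}z,Y_{0:t}{=}y_{0:t})=\widehat{\psi}_t(y_{0:t},x,z)$ by definition~(\ref{eq:def-psi-chapeau}). Dividing numerator by denominator and combining exponents produces $\pm 2M|z-x|$ and $\pm(tM/2+t(M+M^2)/2)=\pm t(M+M^2/2)$, matching the stated bounds.

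No step presents a real obstacle: the only care needed is to check that the pathwise control of $S_t$ can be carried inside both conditional expectations, which is immediate since the bound is a deterministic function of $|X_t-X_0|$ and $t$, and to confirm the cancellation $\E^{\widehat{\p}}(O_t\mid X)=1$ that the preamble already used tacitly. Everything else follows directly from Girsanov and from the $L^\infty$ bounds on $f$ and $f'$.
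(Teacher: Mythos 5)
Your proof is correct and follows essentially the same route as the paper: the same factorization of $\left.\tfrac{d\p}{d\widehat{\p}}\right|_{\mathcal F_t}$ into a signal part $S_t$ (bounded pathwise via $F$, $\|f\|_\infty$, $\|f'\|_\infty\le M$) and the observation exponential $O_t$, the same reduction of the denominator via $\E^{\widehat{\p}}(O_t\mid X)=1$, and the same insertion of the pathwise bounds into both conditional expectations before taking the ratio. The only discrepancy with the statement is the paper's own $\tau$ versus $t$ in the exponent, which your computation correctly resolves as $t(M+M^2/2)$.
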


\subsection{Change of measure}

Under $\widehat{\p}$, $\widehat{V}$ is a standard Brownian motion.
So, using a standard representation of a Brownian bridge, we can rewrite
$\widehat{\psi}$ as
\begin{multline*}
\widehat{\psi}(y_{0:\tau},x,z)=\E\left(\exp\left(\int_{0}^{\tau}h\left(x\left(1-\frac{s}{\tau}\right)+z\frac{s}{\tau}+\left(B_{s}-\frac{s}{\tau}B_{\tau}\right)\right)dy_{s}\right.\right.\\
\left.\left.-\frac{h^{2}}{2}\int_{0}^{\tau}\left(x\left(1-\frac{s}{\tau}\right)+z\frac{s}{\tau}+\left(B_{s}-\frac{s}{\tau}B_{\tau}\right)\right)^{2}ds\right)\right)\,,
\end{multline*}
where $B$ is a standard Brownian motion (under $\p$). As we want
to compute the above integral, where $B$ is the only random variable
involved, we can suppose that $B$ is adapted to the filtration $\mathcal{F}$.
We have (using the change of variable $s'=s/\tau$ and the scaling
property of the Brownian motion) 
\begin{multline*}
\widehat{\psi}(y_{0:\tau},x,z)=\E\left(\exp\left(\int_{0}^{1}h\left(x(1-s'\right)+zs'+B_{\tau s'}-s'B_{\tau})dy_{\tau s'}\right.\right.\\
\left.\left.-\frac{h^{2}\tau}{2}\int_{0}^{1}\left(x(1-s')+zs'+B_{\tau s'}-s'B_{\tau}\right)^{2}ds'\right)\right)\\
=\E\left(\exp\left(\int_{0}^{1}h(x(1-s')+zs'+\sqrt{\tau}(B_{s'}-s'B_{1}))dy_{\tau s'}\right.\right.\\
\left.\left.-\frac{h^{2}\tau}{2}\int_{0}^{1}(x(1-s')+zs'+\sqrt{\tau}(B_{s'}-s'B_{1}))^{2}ds'\right)\right)
\end{multline*}
In the spirit of~\cite{mansuy-yor-2008} (Section 2.1), we define
a new probability $\mathbb{Q}$ by (for all $t$)
\[
\left.\frac{d\q}{d\p}\right|_{\mathcal{F}_{t}}=\exp\left(-\frac{h^{2}\tau^{2}}{2}\int_{0}^{1}B_{s}^{2}ds-h\tau\int_{0}^{1}B_{s}dB_{s}\right)\,.
\]
By Girsanov's theorem, under the probability $\q$, the process
\begin{equation}
\beta_{t}=B_{t}+\int_{0}^{t}h\tau B_{s}ds\,,\,\forall t\geq0\label{eq:O-U}
\end{equation}
is a Brownian motion. We get
\begin{multline}
\widehat{\psi}(y_{0:\tau},x,z)=\exp\left(\int_{0}^{1}h(x(1-s)+zs)dy_{\tau s}-\frac{h^{2}\tau}{2}\int_{0}^{1}(x(1-s)+zs)^{2}ds\right)\\
\times\E^{\q}\left(\exp\left(\int_{0}^{1}h\sqrt{\tau}(B_{s}-sB_{1})dy_{\tau s}-h^{2}\tau^{3/2}\int_{0}^{1}(x(1-s)+zs)(B_{s}-sB_{1})ds\right.\right.\\
\left.\left.-\frac{h^{2}\tau^{2}}{2}\int_{0}^{1}s^{2}B_{1}^{2}-2sB_{s}B_{1}ds+h\tau\int_{0}^{1}B_{s}dB_{s}\right)\right)\,.\label{eq:psi-chapeau-01}
\end{multline}
Using the integration by parts formula, we can rewrite the last expectation
as 
\begin{multline}
\E^{\q}\left(\exp\left(-h\sqrt{\tau}\int_{0}^{1}(y_{\tau s}-\int_{0}^{1}y_{\tau u}du)dB_{s}+h^{2}\tau^{3/2}\int_{0}^{1}\left(-x\frac{(1-s)^{2}}{2}+z\frac{s^{2}}{2}+\frac{x}{6}-\frac{z}{6}\right)dB_{s}\right.\right.\\
\left.\left.+h^{2}\tau^{2}B_{1}\left(\frac{B_{1}}{3}-\int_{0}^{1}\frac{s^{2}}{2}dB_{s}\right)+h\tau\left(\frac{B_{1}^{2}}{2}-\frac{1}{2}\right)\right)\right)=\\
\E^{\q}\left(\exp\left(-h\sqrt{\tau}\int_{0}^{1}(y_{\tau s}-\int_{0}^{1}y_{\tau u}du)dB_{s}+h^{2}\tau^{3/2}x\int_{0}^{1}sdB_{s}+\frac{h^{2}\tau^{3/2}}{2}(z-x)\int_{0}^{1}s^{2}dB_{s}\right.\right.\\
\left.\left.-h^{2}\tau^{3/2}\left(\frac{x}{3}+\frac{z}{6}\right)B_{1}+\left(\frac{h^{2}\tau^{2}}{3}+\frac{h\tau}{2}\right)B_{1}^{2}-\frac{h^{2}\tau^{2}}{2}B_{1}\int_{0}^{1}s^{2}dB_{s}-\frac{h\tau}{2}\right)\right)\,.\label{eq:psi-chapeau-02}
\end{multline}

\subsection{Covariances computation}

The last expectation contains an exponential of a polynomial of degree
$2$ of $4$ Gaussians:
\[
G_{1}=B_{1}\,,\,G_{2}=\int_{0}^{1}sdB_{s}\,,\,G_{3}=\int_{0}^{1}s^{2}dB_{s}\,,\,G_{4}=\int_{0}^{1}\left(y_{\tau s}-\int_{0}^{1}y_{\tau u}du\right)dB_{s}\,.
\]
So this expectation can be expressed as a function of the covariance
matrix of these Gaussians. We compute here the covariances, which
do not depend on $y_{0:\tau}$. We set 
\begin{equation}
\theta=h\tau\,.\label{eq:def-theta}
\end{equation}
\begin{lem}
\label{lem:calcul-var}We have:
\[
\var^{\q}(G_{1})=\frac{1-e^{-2\theta}}{2\theta}\,,
\]
\[
\var^{\q}(G_{2})=\left(1+\frac{1}{\theta}\right)^{2}\frac{(1-e^{-2\theta})}{2\theta}+\frac{1}{\theta^{2}}-\left(\frac{2}{\theta^{2}}+\frac{2}{\theta^{3}}\right)(1-e^{-\theta})\,,
\]
\begin{multline*}
\var^{\q}(G_{3})=\left(1+\frac{2}{\theta}+\frac{2}{\theta^{2}}\right)^{2}\frac{(1-e^{-2\theta})}{2\theta}+\left(\frac{2}{\theta}+\frac{2}{\theta^{2}}\right)^{3}\frac{\theta}{6}-\frac{8}{6\theta^{5}}-\frac{4}{\theta^{2}}\left(1+\frac{2}{\theta}+\frac{2}{\theta^{2}}\right)
\end{multline*}
\[
\cov^{\q}(G_{1},G_{2})=\left(\frac{1}{2\theta}+\frac{1}{2\theta^{2}}\right)(1-e^{-2\theta})+\frac{e^{-\theta}-1}{\theta^{2}}\,,
\]
\[
\cov^{\q}(G_{1},G_{3})=\left(\frac{1}{2\theta}+\frac{1}{\theta^{2}}+\frac{1}{\theta^{3}}\right)(1-e^{-2\theta})-\frac{2}{\theta^{2}}\,,
\]
\[
\cov^{\q}(G_{2},G_{3})=\left(1+\frac{1}{\theta}\right)\left(\frac{1}{2\theta}+\frac{1}{\theta^{2}}+\frac{1}{\theta^{3}}\right)(1-e^{-2\theta})-\left(\frac{1}{\theta^{2}}+\frac{2}{\theta^{3}}+\frac{2}{\theta^{4}}\right)(1-e^{-\theta})-\frac{1}{\theta^{2}}\,.
\]
\end{lem}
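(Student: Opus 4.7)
The plan is to apply Lemma~\ref{lem:representation-O-U} to rewrite each $G_i$ as a stochastic integral with respect to the $\q$-Brownian motion $\beta$, and then invoke the It\^o isometry to reduce each variance and covariance to a deterministic integral over $[0,1]$.

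More precisely, by Lemmas~\ref{lem:representation-O-U} and~\ref{lem:integral-O-U} applied with $t=1$ and $g(u)=1,u,u^{2}$ respectively, we obtain representations
\[
G_{1}=\int_{0}^{1}e^{\theta(s-1)}d\beta_{s},\quad G_{2}=\int_{0}^{1}\left[\left(1+\tfrac{1}{\theta}\right)e^{\theta(s-1)}-\tfrac{1}{\theta}\right]d\beta_{s},
\]
\[
G_{3}=\int_{0}^{1}\left[\left(1+\tfrac{2}{\theta}+\tfrac{2}{\theta^{2}}\right)e^{\theta(s-1)}-\tfrac{2s}{\theta}-\tfrac{2}{\theta^{2}}\right]d\beta_{s}.
\]
Each variance and covariance is then the $L^{2}([0,1])$ inner product of the corresponding deterministic integrands, by the It\^o isometry for $\beta$ under $\q$.

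The remaining work is the bookkeeping of six integrals. One repeatedly meets three primitives: $\int_{0}^{1}e^{2\theta(s-1)}ds=(1-e^{-2\theta})/(2\theta)$, $\int_{0}^{1}e^{\theta(s-1)}ds=(1-e^{-\theta})/\theta$, and a polynomial-times-exponential piece $\int_{0}^{1}se^{\theta(s-1)}ds=\frac{1}{\theta}-\frac{1-e^{-\theta}}{\theta^{2}}$ obtained by one integration by parts; the latter yields the pleasant identity $\int_{0}^{1}e^{\theta(s-1)}\bigl(\tfrac{2s}{\theta}+\tfrac{2}{\theta^{2}}\bigr)ds=\tfrac{2}{\theta^{2}}$, which is the reason the exponential-polynomial cross terms collapse. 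The pure polynomial integral $\int_{0}^{1}\bigl(\tfrac{2s}{\theta}+\tfrac{2}{\theta^{2}}\bigr)^{2}ds$ is most cleanly written as $\frac{4(\theta+1)^{3}}{3\theta^{5}}-\frac{4}{3\theta^{5}}=\bigl(\tfrac{2}{\theta}+\tfrac{2}{\theta^{2}}\bigr)^{3}\cdot\tfrac{\theta}{6}-\tfrac{8}{6\theta^{5}}$, matching exactly the unusual form of the stated expression for $\var^{\q}(G_{3})$.

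No real obstacle is present; the only subtlety is organizing the algebra so that the cross terms from $\var^{\q}(G_{3})$ and $\cov^{\q}(G_{2},G_{3})$ telescope. In particular, the computation of $\cov^{\q}(G_{2},G_{3})$ splits naturally into four pieces---an $e^{2\theta(s-1)}$ piece giving the coefficient $(1+1/\theta)(1+2/\theta+2/\theta^{2})/(2\theta)$, an $e^{\theta(s-1)}$ piece that contributes $-A(1-e^{-\theta})/\theta^{2}$ via the identity above (with $A=1+2/\theta+2/\theta^{2}$), and the constant terms $-(1+1/\theta)\cdot 2/\theta^{2}+1/\theta^{2}+2/\theta^{3}=-1/\theta^{2}$---which assemble into the announced formula. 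The other five identities are verified by the same procedure.
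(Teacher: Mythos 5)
Your proposal is correct and follows exactly the paper's own route: represent each $G_i$ as $\int_0^1 g_i\,d\beta_s$ via Lemmas \ref{lem:representation-O-U} and \ref{lem:integral-O-U}, then compute variances and covariances as $L^2([0,1])$ inner products of the integrands by the It\^o isometry under $\q$. The explicit integrals you record (in particular $\int_0^1 e^{\theta(s-1)}(\tfrac{2s}{\theta}+\tfrac{2}{\theta^2})\,ds=\tfrac{2}{\theta^2}$ and the regrouping for $\var^{\q}(G_3)$ and $\cov^{\q}(G_2,G_3)$) match the paper's computations.
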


See the proof in Section \ref{subsec:Proofs-of-Section-psi}

Let $U_{1},U_{2},U_{3},U_{4}$ be i.i.d. of law $\mathcal{N}(0,1)$.
We can find $\alpha,\beta,\gamma,a,b,c,\lambda_{1},\lambda_{2},\lambda_{3},\lambda_{4}\in\R$
such that (under $\mathbb{Q}$) 
\[
\left(\begin{array}{c}
G_{1}\\
G_{3}\\
G_{2}\\
G_{4}
\end{array}\right)\overset{\mbox{law}}{=}\left(\begin{array}{cccc}
\alpha U_{1}\\
\beta U_{1} & +\gamma U_{2}\\
aU_{1} & +bU_{2} & +cU_{3}\\
\lambda_{1}U_{1} & +\lambda_{2}U_{2} & +\lambda_{3}U_{3} & +\lambda_{4}U_{4}
\end{array}\right)\,.
\]
(There is no mistake here, we do intend to look at the vector $(G_{1},G_{3},G_{2},G_{4})$.)
Indeed, we take 
\begin{equation}
\alpha=\sqrt{\var^{\q}(G_{1})}\,,\,\beta=\frac{\cov^{\q}(G_{1},G_{3})}{\alpha}\,,\,\gamma=\sqrt{\var^{\q}(G_{3})-\beta^{2}}\,,\label{eq:def-alpha-etc}
\end{equation}
\begin{equation}
a=\frac{\cov^{\q}(G_{1},G_{2})}{\alpha}\,,\,b=\frac{\cov^{\q}(G_{2},G_{3})-a\beta}{\gamma}\,,\,c=\sqrt{\var^{\q}(G_{2})-a^{2}-b^{2}}\,.\label{eq:def-a-etc}
\end{equation}
And we find $\lambda_{1},\dots,\lambda_{4}$ by solving 
\begin{equation}
\left\{ \begin{array}{cccccc}
\alpha\lambda_{1} &  &  &  & = & \cov^{\q}(G_{1},G_{4})\\
\beta\lambda_{1} & +\gamma\lambda_{2} &  &  & = & \cov^{\q}(G_{3},G_{4})\\
a\lambda_{1} & +b\lambda_{2} & +c\lambda_{3} &  & = & \cov^{\q}(G_{2},G_{4})\\
\lambda_{1}^{2} & +\lambda_{2}^{2} & +\lambda_{3}^{2} & +\lambda_{4}^{2} & = & \var^{\q}(G_{4})\,.
\end{array}\right.\label{eq:def-lambdas}
\end{equation}
We observe that $\alpha,\beta,\gamma,a,b,c$ can be written explicitly
in terms of the parameters of the problem.

\subsection{Integral computation}

The last part of (\ref{eq:psi-chapeau-02}) is equal to
\begin{multline}
\E^{\q}\left(\exp\left(\left(\frac{\theta^{2}}{3}+\frac{\theta}{2}\right)G_{1}^{2}-\frac{\theta^{2}}{2}G_{1}G_{3}-h^{2}\tau^{3/2}\left(\frac{x}{3}+\frac{z}{6}\right)G_{1}\right.\right.\\
\left.\left.\left.+h^{2}\tau^{3/2}xG_{2}+\frac{h^{2}\tau^{3/2}}{2}(z-x)G_{3}-h\sqrt{\tau}G_{4}-\frac{\theta}{2}\right)\right)\right)=\mbox{\ensuremath{}}\\
\int_{u_{1,}\dots,u_{4}\in\R}\exp\left(\left(\frac{\theta^{2}}{3}+\frac{\theta}{2}\right)\alpha^{2}u_{1}^{2}-\frac{\theta^{2}}{2}\alpha u_{1}(\beta u_{1}+\gamma u_{2})-h^{2}\tau^{3/2}\left(\frac{x}{3}+\frac{z}{6}\right)\alpha u_{1}\right.\\
\left.+h^{2}\tau^{3/2}x(au_{1}+bu_{2}+cu_{3})+\frac{h^{2}\tau^{3/2}}{2}(z-x)(\beta u_{1}+\gamma u_{2})-h\sqrt{\tau}(\lambda_{1}u_{1}+\dots+\lambda_{4}u_{4})-\frac{\theta}{2}\right)\\
\frac{\exp\left(-\frac{(u_{1}^{2}+\dots+u_{4}^{2})}{2}\right)}{(2\pi)^{2}}du_{1}\dots du_{4}=\\
\int_{u_{1,}\dots,u_{4}\in\R}\exp\left\{ -\frac{1}{2\sigma_{1}^{2}}\left[u_{1}-\sigma_{1}^{2}\left(-\frac{\theta^{2}\alpha\gamma}{2}u_{2}-h^{2}\tau^{3/2}\left(\frac{x}{3}+\frac{z}{6}\right)\alpha+h^{2}\tau^{3/2}xa\right.\right.\right.\\
\left.\left.+\frac{h^{2}\tau^{3/2}}{2}(z-x)\beta-h\sqrt{\tau}\lambda_{1}\right)\right]^{2}\\
+\frac{\sigma_{1}^{2}}{2}\left[-\frac{\theta^{2}\alpha\gamma}{2}u_{2}-h^{2}\tau^{3/2}\left(\frac{x}{3}+\frac{z}{6}\right)\alpha+h^{2}\tau^{3/2}xa+\frac{h^{2}\tau^{3/2}}{2}(z-x)\beta-h\sqrt{\tau}\lambda_{1}\right]^{2}\\
+h^{2}\tau^{3/2}x(bu_{2}+cu_{3})+\frac{h^{2}\tau^{3/2}}{2}(z-x)\gamma u_{2}-h\sqrt{\tau}(\lambda_{2}u_{2}+\dots+\lambda_{4}u_{4})-\frac{\theta}{2}\\
\left.-\frac{(u_{2}^{2}+\dots+u_{4}^{2})}{2}\right\} \frac{1}{(2\pi)^{2}}du_{1}\dots du_{4}\,,\label{eq:psi-chapeau-03}
\end{multline}
where
\begin{equation}
\sigma_{1}^{2}=\left(2\left(-\left(\frac{\theta^{2}}{3}+\frac{\theta}{2}\right)\alpha^{2}+\frac{\theta^{2}\alpha\beta}{2}+\frac{1}{2}\right)\right)^{-1}\,.\label{eq:def-sigma1}
\end{equation}
As the above expectation is finite then $\sigma_{1}^{2}$ is well
defined. We set 
\begin{equation}
m_{1}=\sigma_{1}^{2}\left(-h^{2}\tau^{3/2}\left(\frac{x}{3}+\frac{z}{6}\right)\alpha+h^{2}\tau^{3/2}xa+\frac{h^{2}\tau^{3/2}}{2}(z-x)\beta-h\sqrt{\tau}\lambda_{1}\right)\,.\label{eq:def-m1}
\end{equation}
The above expectation (\ref{eq:psi-chapeau-03}) is equal to:
\begin{multline}
\int_{u_{1,}\dots,u_{4}\in\R}\exp\left(-\frac{1}{2\sigma_{1}^{2}}\left[u_{1}+\sigma_{1}^{2}\frac{\theta^{2}\alpha\gamma}{2}u_{2}-m_{1}\right]^{2}\right.\\
+\left(\frac{\sigma_{1}^{2}\theta^{2}\alpha\gamma}{2}\right)^{2}u_{2}^{2}\frac{1}{2\sigma_{1}^{2}}+\frac{m_{1}^{2}}{2\sigma_{1}^{2}}-\frac{1}{2\sigma_{1}^{2}}\times2\left(\frac{\sigma_{1}^{2}\theta^{2}\alpha\gamma}{2}\right)m_{1}u_{2}\\
+h^{2}\tau^{3/2}x(bu_{2}+cu_{3})+\frac{h^{2}\tau^{3/2}}{2}(z-x)\gamma u_{2}\\
\left.-h\sqrt{\tau}(\lambda_{2}u_{2}+\lambda_{3}u_{3}+\lambda_{4}u_{4})-\frac{(u_{2}^{2}+u_{3}^{2}+u_{4}^{2})}{2}-\frac{\theta}{2}\right)\frac{1}{(2\pi)^{2}}du_{1}\dots du_{4}=\\
\int_{u_{1,}\dots,u_{4}\in\R}\exp\left(-\frac{1}{2\sigma_{1}^{2}}\left[u_{1}+\sigma_{1}^{2}\frac{\theta^{2}\alpha\gamma}{2}u_{2}-m_{1}\right]^{2}\right.\\
-\frac{1}{2\sigma_{2}^{2}}\left[u_{2}-\sigma_{2}^{2}\left(h^{2}\tau^{3/2}xb+\frac{h^{2}\tau^{3/2}}{2}(z-x)\gamma-h\sqrt{\tau}\lambda_{2}-\frac{\theta^{2}\alpha\gamma m_{1}}{2}\right)\right]^{2}\\
\left.+\frac{m_{2}^{2}}{2\sigma_{2}^{2}}+\frac{m_{1}^{2}}{2\sigma_{1}^{2}}+h^{2}\tau^{3/2}xcu_{3}-h\sqrt{\tau}(\lambda_{3}u_{3}+\lambda_{4}u_{4})-\frac{(u_{3}^{2}+u_{4}^{2})}{2}-\frac{\theta}{2}\right)\frac{1}{(2\pi)^{2}}\,,\label{eq:psi-chapeau-04}
\end{multline}
where 
\begin{equation}
\sigma_{2}^{2}=\left(2\left(-\frac{\sigma_{1}^{2}\theta^{4}\alpha^{2}\gamma^{2}}{8}+\frac{1}{2}\right)\right)^{-1}\,,\label{eq:def-sigma2}
\end{equation}
and 
\begin{equation}
m_{2}=\sigma_{2}^{2}\left(h^{2}\tau^{3/2}xb+\frac{h^{2}\tau^{3/2}}{2}(z-x)\gamma-h\sqrt{\tau}\lambda_{2}-\frac{\theta^{2}\alpha\gamma}{2}m_{1}\right)\,.\label{eq:def-m2}
\end{equation}
Then (\ref{eq:psi-chapeau-04}) is equal to:
\begin{multline}
\int_{u_{1,}\dots,u_{4}\in\R}\exp\left(-\frac{1}{2\sigma_{1}^{2}}\left[u_{1}+\sigma_{1}^{2}\frac{\theta^{2}\alpha\gamma}{2}u_{2}-m_{1}\right]^{2}-\frac{1}{2\sigma_{2}^{2}}\left[u_{2}-m_{2}\right]^{2}+\frac{m_{2}^{2}}{2\sigma_{2}^{2}}+\frac{m_{1}^{2}}{2\sigma_{1}^{2}}\right.\\
-\frac{1}{2}\left[u_{3}-h^{2}\tau^{3/2}cx+h\sqrt{\tau}\lambda_{3}\right]^{2}-\frac{1}{2}\left[u_{4}+h\sqrt{\tau}\lambda_{4}\right]^{2}\\
\left.+\frac{1}{2}(-h^{2}\tau^{3/2}cx+h\sqrt{\tau}\lambda_{3})^{2}+\frac{1}{2}(-h\sqrt{\tau}\lambda_{4})^{2}-\frac{\theta}{2}\right)\frac{1}{(2\pi)^{2}}du_{1}\dots du_{4}=\\
\sigma_{1}\sigma_{2}\exp\left(\frac{m_{1}^{2}}{2\sigma_{1}^{2}}+\frac{m_{2}^{2}}{2\sigma_{2}^{2}}+\frac{1}{2}(-h^{2}\tau^{3/2}cx+h\sqrt{\tau}\lambda_{3})^{2}+\frac{1}{2}(h\sqrt{\tau}\lambda_{4})^{2}-\frac{\theta}{2}\right)\label{eq:psi-chapeau-05}
\end{multline}

\subsection{\label{subsec:Asymptotics}Asymptotic $\tau\rightarrow+\infty$}

From (\ref{eq:psi-chapeau-01}), (\ref{eq:psi-chapeau-02}), (\ref{eq:psi-chapeau-03}),
(\ref{eq:psi-chapeau-04}), (\ref{eq:psi-chapeau-05}), we see that
$\widehat{\psi}(y_{0:\tau},x,z)\propto\exp(P(x,z))$ with $P$ a polynomial
of degree $2$ in $x$, $z$. Let us write $-A_{2}(\theta)$ for the
coefficient of $x^{2}$ in $P$, $-B_{2}(\theta)$ for the coefficient
of $z^{2}$ in $P$, $C_{1}(\theta)$ for the coefficient of $xz$
in $P$, $A_{1}(\theta)$ for the coefficient of $x$ in $P$, $B_{1}(\theta)$
for the coefficient of $z$ in $P$ and $C_{0}(\theta)$ for the ``constant''
coefficient. We will write $A_{1}^{y_{0:\tau}}(\theta)=A_{1}(y_{0:\tau},\theta)$
(or simply $A_{1}^{y_{0:\tau}}$), etc, when in want of stressing
the dependency in $y$. When there will be no ambiguity, we will drop
the $y_{.}$ superscript. The coefficients $A_{2}^{y_{0:\tau}}$,
$B_{2}^{y_{0:\tau}}$, $C_{1}^{y_{0:\tau}}$ do not depend on $y$
as it will be seen below. We have
\begin{equation}
\widehat{\psi}(y_{0:\tau},x,z)=\sigma_{1}\sigma_{2}\exp\left(-A_{2}x^{2}-B_{2}z^{2}+A_{1}^{y_{0:\tau}}x+B_{1}^{y_{0:\tau}}z+C_{1}^{y_{0:\tau}}xz+C_{0}^{y_{0:\tau}}\right)\,.\label{eq:pol-psi-chapeau}
\end{equation}
 We are interested in the limit $\tau\rightarrow+\infty$, with $h$
being fixed (or equivalently $\theta\rightarrow+\infty$ with $h$
being fixed).
\begin{lem}
\label{lem:asymptotics-of-A-B-C}We have
\[
A_{2}(\theta)\underset{\theta\rightarrow+\infty}{\longrightarrow}\frac{h}{2}\,,\,B_{2}(\theta)\underset{\theta\rightarrow+\infty}{\longrightarrow}\frac{h}{2}\,,\,C_{1}(\theta)=-\frac{3h}{2\theta}+o\left(\frac{1}{\theta}\right)\,.
\]
\end{lem}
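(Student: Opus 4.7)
Proof plan:

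The claim is an asymptotic statement about the quadratic coefficients in the Gaussian form of $\widehat{\psi}$ given by (\ref{eq:psi-chapeau-01})--(\ref{eq:psi-chapeau-05}). Heuristically, viewing $\widehat{\psi}(y_{0:\tau},x,z)$ as a smoothing likelihood, the limits $A_{2},B_{2}\to h/2$ match the coefficient $h/2$ that appears in the stationary Kalman filter for the underlying linear-Gaussian model, while $C_{1}\sim -3/(2\tau)$ reflects the asymptotic decoupling of the endpoints of the bridge over a horizon $\tau$.

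The actual proof is a direct computation. First, I would identify all quadratic-in-$(x,z)$ contributions to the exponent of $\widehat{\psi}$. The deterministic prefactor from (\ref{eq:psi-chapeau-01}) contributes $-\frac{h\theta}{6}(x^{2}+xz+z^{2})$. The square $\frac{1}{2}(-h^{2}\tau^{3/2}cx+h\sqrt{\tau}\lambda_{3})^{2}$ from (\ref{eq:psi-chapeau-05}) contributes $-\frac{h^{4}\tau^{3}c^{2}}{2}$ to $-A_{2}$. Writing $m_{j}=\mu_{j}(\theta)x+\nu_{j}(\theta)z+(\text{$y$-dependent terms})$ via (\ref{eq:def-m1})--(\ref{eq:def-m2}), the pieces $m_{j}^{2}/(2\sigma_{j}^{2})$ for $j=1,2$ contribute $\mu_{j}^{2}/(2\sigma_{j}^{2})$, $\nu_{j}^{2}/(2\sigma_{j}^{2})$, $\mu_{j}\nu_{j}/\sigma_{j}^{2}$ to $-A_{2}, -B_{2}, C_{1}$ respectively. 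This yields closed-form rational expressions in $\alpha,\beta,\gamma,a,b,c,\sigma_{1}^{2},\sigma_{2}^{2}$, manifestly $y$-independent.

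Second, I would expand the building blocks as $\theta\to+\infty$. Lemma \ref{lem:calcul-var} gives (modulo $O(e^{-\theta})$) $\var^{\q}(G_{1})=1/(2\theta)$, $\var^{\q}(G_{2})=1/(2\theta)+O(1/\theta^{3})$, $\var^{\q}(G_{3})=1/(2\theta)-2/(3\theta^{2})+\dots$, together with analogous expansions for the three covariances. Plugging into (\ref{eq:def-alpha-etc})--(\ref{eq:def-a-etc}) yields $\alpha,\beta,a\sim(2\theta)^{-1/2}$, $\gamma\sim 2/(\theta\sqrt{3})$, $b\sim\sqrt{3}/(2\theta)$, $c^{2}\sim 1/(4\theta^{2})$ (the last three each arising from second-order cancellations under the square roots). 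From (\ref{eq:def-sigma1}), $\sigma_{1}^{2}\sim 6/\theta$; in (\ref{eq:def-sigma2}) the quantity $\sigma_{1}^{2}\alpha^{2}\gamma^{2}\theta^{4}/8$ equals exactly $1/2$ at leading order, cancelling the $+1/2$ and requiring one further order of expansion to fix $\sigma_{2}^{2}$. Using the identity $-\alpha/3+a-\beta/2 = (2\theta)^{-1/2}/6+o(\theta^{-1/2})$, one finds $\mu_{1}\to h^{1/2}/\sqrt{2}$ and $\nu_{1}\to h^{1/2}\sqrt{2}$; the leading $O(\theta)$ cancellations $h\theta(1/6-1/24-1/8)=0$ for $A_{2}$, symmetrically for $B_{2}$, and $h\theta(-1/6+1/6)=0$ for $C_{1}$ then follow. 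Collecting the subleading terms (together with the $\mu_{2},\nu_{2}$ contributions, which one verifies are subdominant) produces the claimed finite limits $h/2$ and the leading $-3h/(2\theta)$ for $C_{1}$.

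The main obstacle is the bookkeeping of cancellations: several individually $O(\theta)$ contributions must combine to $O(1)$ for $A_{2},B_{2}$ and to $O(1/\theta)$ for $C_{1}$. This requires expanding each elementary quantity to three successive orders in $1/\theta$, along with a careful analysis of the delicate $\sigma_{2}^{2}$ behavior. Each individual step is elementary, but verifying that the specific constants $h/2$ and $-3h/(2\theta)$ emerge is a lengthy algebraic exercise.
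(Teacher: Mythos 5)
Your plan is essentially the paper's own proof: write $-A_{2}$, $-B_{2}$, $C_{1}$ explicitly in terms of $\alpha,\beta,\gamma,a,b,c,\sigma_{1}^{2},\sigma_{2}^{2}$, expand each of these to enough orders in $1/\theta$ (the paper does this with computer algebra), and check the $O(\theta)$ cancellations — and your leading-order data ($\alpha,\beta,a\sim(2\theta)^{-1/2}$, $\sigma_{1}^{2}\sim6/\theta$, $c^{2}\sim1/(4\theta^{2})$, the degenerate $\sigma_{2}^{2}$, and the cancellations $1/6-1/24-1/8=0$ and $-1/6+1/6=0$) all match the paper's. Only note that your sentence attributing "$-h^{4}\tau^{3}c^{2}/2$ to $-A_{2}$" has the sign backwards relative to your own (correct) later arithmetic, and that the subleading constants giving exactly $h/2$ and $-3h/(2\theta)$ are asserted rather than collected, which is precisely the bookkeeping the paper delegates to software.
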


The proof relies partly on a symbolic computation program (see Section
\ref{subsec:Proofs-of-Section-psi}).

Let us set, for all $s\leq t$,
\[
\mathcal{W}_{s,t}=\sup_{(s_{1},s_{2})\in[s,t]}|W_{s_{1}}-W_{s_{2}}|\,,\,\mathcal{V}_{s,t}=\sup_{(s_{1},s_{2})\in[s,t]}|V_{s_{1}}-V_{s_{2}}|\,.
\]
\begin{defn}
\label{def:preceq}Suppose we have functions $f_{1}$, $f_{2}$ going
from some set $F$ to $\R$. We write 
\[
f_{1}\preceq f_{2}
\]
if there exists a constant $B$ in $\R_{+}$, which does not depend
on the parameters of our problem, such that $f_{1}(z)\leq Bf_{2}(z)$,
for all $z$ in $F$. \\
In the particular case when we are dealing with functions of a parameter
$\Delta\in\R$ %
, we write 
\[
f_{1}\underset{\Delta}{\preceq}f_{2}\mbox{ or }f_{1}(\Delta)\preD f_{2}(\Delta)
\]
 if there exists a constant $B_{1}$ in $\R_{+}$, which does not
depend on the parameters of our problem, and a constant $\Delta_{0}$,
which may depend on the parameters of our problem, such that 
\[
\Delta\geq\Delta_{0}\Rightarrow f_{1}(\Delta)\leq B_{1}f_{2}(\Delta)\,.
\]
If, in addition, $\Delta_{0}$ depends continuously on the parameter
$\tau$, we write 
\[
f_{1}\underset{\Delta,c}{\preceq}f_{2}\,.
\]
\end{defn}

The notation $\underset{\Delta}{\preceq}$ is akin to the notation
$O(\dots)$. It has the advantage that one can single out which asymptotic
we are studying.

We state here (without proof) useful properties concerning the above
Definition. 
\begin{lem}
\label{lem:proprietes-preceq}Suppose we have functions $f$, $f_{1}$,
$f_{2}$, $h_{1}$, $h_{2}$.

\begin{enumerate}
\item If $f\leq f_{1}+f_{2}$ and $f_{1}\preceq f_{2}$ then $f\preceq f_{2}$.
\item If $f\leq f_{1}+f_{2}$ and $\log(f_{1})\preD h_{1}$ and $\log(f_{2})\preD h_{2}$,
with $h_{1}(\Delta),\,h_{2}(\Delta)\underset{\Delta\rightarrow\infty}{\longrightarrow}-\infty$,
then $\log(f)\preD\sup(h_{1},h_{2})$. 
\item \label{enu:lemme-technique-iii}If we have $f_{1}\preDc f_{2}$, say
for $\tau\geq\tau_{0}$ ($\tau_{0}>0$), then there exists a constant
$B_{1}$ and a continuous function $\Delta_{0}$ such that, for all
$\tau\geq\tau_{0}$ and $\Delta\geq\Delta_{0}(\tau)$, $f_{1}(\Delta)\leq B_{1}f_{2}(\Delta)$.
In particular, for any $\tau_{1}>\tau_{0}$, if $\tau\in[\tau_{0},\tau_{1}]$
and $\Delta\ge\sup_{t\in[\tau_{0},\tau_{1}]}\Delta_{0}(t)$ then $f_{1}(\Delta)\leq B_{1}f_{2}(\Delta)$.
\end{enumerate}
\end{lem}

We have the following ``variational'' bounds on $B_{1}$ (see the
proof in Section \ref{subsec:Proofs-of-Section-psi}).
\begin{lem}
\label{lem:variation-const}For all $k\in\N$, 
\[
\left|B_{1}(Y_{k\tau:(k+1)\tau},\theta)-B_{1}(Y_{(k+1)\tau:(k+2)\tau},\theta)\right|\preceq Mh\tau+h\mathcal{V}_{k\tau,(k+2)\tau}+(h+\frac{1}{\tau})\mathcal{W}_{k\tau,(k+2)\tau}\,,
\]
and
\[
|B_{1}(Y_{0:\tau},\theta)|\preceq Mh\tau+h\mathcal{V}_{0,2\tau}+(h+\frac{1}{\tau})\mathcal{W}_{0,2\tau}\,.
\]
\end{lem}

\section{Definitions and useful notions\label{sec:Definitions-and-useful} }

 We follow here the ideas of \cite{oudjane-rubenthaler-2005}.

\subsection{Notations\label{subsec:Notations}}

We state here notations and definitions that will be useful throughout
the paper.
\begin{itemize}
\item The set $\R$, $\R^{2}$ are endowed, respectively, with $\mathcal{B}(\R)$,
$\mathcal{B}(\R^{2})$, their Borel tribes.
\item The elements of $\R^{2}$ are treated like line vectors. If $(x,z)\in\R^{2}$
and $\delta>0$, $B((x,z),\epsilon)=\{(x',z')\in\R^{2}\,:\,\sqrt{(x-x')^{2}+(z-z')^{2}}<\epsilon\}$
(the ball of center $(x,z)$ and radius $\epsilon$). The superscript
$T$ denotes the transposition. For example, if $(x,z)\in\R^{2}$,
then $(x,z)^{T}$ is a column vector.
\item The set of probability distributions on a measurable space $(E,\mathcal{F})$
and the set of nonnegative measures on $(E,\mathcal{F})$ are denoted
by $\mathcal{P}(E)$ and $\mathcal{M}^{+}(E)$ respectively. We write
$\mathcal{C}(E)$ for the set of continuous function on a topological
space $E$ and $\mathcal{C}_{b}^{+}(E)$ for the set of bounded, continuous,
nonnegative functions on $E$.
\item When applied to measures, $\Vert\dots\Vert$ stands for the total
variation norm (for $\mu$, $\nu$ probabilities on a measurable space
$(F,\mathcal{F})$, $\Vert\mu-\nu\Vert=\sup_{A\in\mathcal{F}}|\mu(A)-\nu(A)|$).
\item For any nonnegative kernel $K$ on a measurable space $E$ and any
$\mu\in\mathcal{M}^{+}(E)$, we set
\[
K\mu(dv')=\int_{E}\mu(dv)K(v,dv')\,.
\]
\item If we have a sequence of nonnegative kernels $K_{1}$, $K_{2}$, \ldots{}
on some measured spaces $E_{1}$, $E_{2}$, \ldots{} (meaning that
for all $i\geq1$, $x\in E_{i-1}$, $K_{i}(x,.)$ is a nonnegative
measure on $E_{i}$, then for all $i<j$, we define the kernel 
\begin{multline*}
K_{i+1:j}(x_{i},dx_{j})=\int_{x_{i+1}\in E_{i+1}}\dots\int_{x_{j-1}\in E_{j-1}}K_{i+1}(x_{i},dx_{i+1})K_{i+2}(x_{i+1},dx_{i+2})\dots K_{j}(x_{j-1},dx_{j})\,.
\end{multline*}
\item For any measurable space $E$ and any nonzero $\mu\in\mathcal{M}^{+}(E)$,
we define the normalized nonnegative measure,
\[
\overline{\mu}=\frac{1}{\mu(E)}\mu\,.
\]
\item For any measurable space $E$ and any nonnegative kernel $K$ defined
on $E$, we define the normalized nonnegative nonlinear operator $\overline{K}$
on $\meas(E)$, taking values in $\proba(E)$, and defined by 
\begin{equation}
\overline{K}(\mu)=\frac{K\,\mu}{(K\,\mu)(E)}=\frac{K\,\overline{\mu}}{(K\,\overline{\mu})(E)}=\overline{K}(\overline{\mu})\,,\label{eq:def-operateur-normalise}
\end{equation}
 for any $\mu\in\meas(E)$ such that $K\,\mu(E)\neq0$, and defined
by $\overline{K}(\mu)=0$ otherwise. 
\item A kernel $K$ from a measurable space $E_{1}$ into another measurable
space $E_{2}$ is said to be $\epsilon$-mixing ($\epsilon\in(0,1)$)
if there exists $\lambda$ in $\mathcal{M}^{+}(E_{2})$ and $\epsilon_{1},\epsilon_{2}>0$
such that, for all $x_{1}$ in $E_{1}$,
\[
\epsilon_{1}\lambda(.)\leq K(x_{1},.)\leq\frac{1}{\epsilon_{2}}\lambda(.)\,\mbox{with }\epsilon_{1}\epsilon_{2}=\epsilon^{2}\,.
\]
This property implies that, for all $A$, $\mu$, $\overline{K}(\mu)(A)\geq\epsilon^{2}\overline{\lambda}(A)$.
If $\overline{K}$ is Markov, this last inequality implies that $\overline{K}$
is $(1-\epsilon^{2})$-contracting in total variation (see \cite{del-moral-guionnet-2001}
p. 161-162 for more details):
\[
\forall\mu,\nu\in\mathcal{P}(E),\,\Vert\overline{K}(\mu)-\overline{K}(\nu)\Vert\leq(1-\epsilon^{2})\Vert\mu-\nu\Vert\,.
\]
\item For any measurable space $E$ and any $\psi:E\rightarrow\R^{+}$ (measurable)
and $\mu\in\meas(E)$, we set 
\[
\left\langle \mu,\psi\right\rangle =\int_{E}\psi(x)\mu(dx)\,.
\]
If in addition, $\left\langle \mbox{\ensuremath{\mu},\ensuremath{\psi}}\right\rangle >0$,
we set 
\[
\psi\bullet\mu(dv)=\frac{1}{\left\langle \mu,\psi\right\rangle }\times\psi(v)\mu(dv)\,.
\]
\item For $\mu$ and $\mu'$ in $\mathcal{M}^{+}(E)$ ($(E,\mathcal{F})$
being a measurable space), we say that $\mu$ and $\mu'$ are comparable
if there exist positive constants $a$ and $b$ such that, for all
$A\in\mathcal{F}$, 
\[
a\mu'(A)\leq\mu(A)\leq b\mu'(A)\,.
\]
We then define the Hilbert metric between $\mu$ and $\mu'$ by
\[
h(\mu,\mu')=\log\left(\frac{\sup_{A\in\mathcal{F}:\mu'(A)>0}\frac{\mu(A)}{\mu'(A)}}{\inf_{A\in\mathcal{F}:\mu'(A)>0}\frac{\mu(A)}{\mu'(A)}}\right)\,.
\]
It is easily seen (see for instance \cite{oudjane-2000}, Chapter
2) that, for any nonnegative kernal $K$ and any $A$ in $\mathcal{F}$,
\begin{eqnarray}
h(K\mu,K\mu') & \leq & h(\mu,\mu')\,,\label{eq:hilbert-prop-01}\\
h(\overline{\mu},\overline{\mu'}) & \leq & h(\mu,\mu')\,,\label{eq:hilbert-prop-02}
\end{eqnarray}
\begin{equation}
\exp(-h(\mu,\mu'))\leq\frac{\mu(A)}{\mu'(A)}\leq\exp(h(\mu,\mu'))\mbox{, if }\mu'(A)>0\,.\label{eq:encadrement-rapport-mesures}
\end{equation}
In addition, we have the following relation with the total variation
norm:
\begin{equation}
\Vert\overline{\mu}-\overline{\mu'}\Vert\leq\frac{2}{\log(3)}h(\mu,\mu')\,.\label{eq:inegalite-tv-hilbert}
\end{equation}
\item We set $\widetilde{Q}$ to be the transition of the chain $(X_{k\tau},X_{(k+1)\tau})_{k\geq0}$.
\item We write $\propto$ between two quantities if they are equal up to
a multiplicative constant.
\item For $\psi\,:\,\R^{2}\rightarrow\R$, we write $\psi(0,.)$ for the
function such that, for all $x$ in $\R$, $\psi(0,.)(x)=\psi(0,x)$. 
\item We write $\lambda_{W}$ for the Wiener measure on $\mathcal{C}([0,\tau])$. 
\end{itemize}
We suppose here that the observation $(Y_{t})_{t\geq0}$ is fixed.
For $k\in\N^{*}$ and $x,z\in\R$, we define
\begin{equation}
\psi_{k}(x,z)=\psi(Y_{(k-1)\tau:k\tau},x,z)\label{eq:def-psi_k}
\end{equation}
(the density $\psi$ is defined in Lemma \ref{lem:encadrement-potentiel}).
For $x_{1}\in\R$, $x_{2}\in\R$ and $n\in\N^{*}$, we introduce the
nonnegative kernel
\begin{equation}
R_{n}(x_{1},dx_{2})=\psi_{n}(x_{1},x_{2})Q(x_{1},dx_{2})\,.\label{eq:def-R_n}
\end{equation}
Using the above notations, we now have, for all $n\in\N^{*}$, and
for all probability law $\pi_{0}'$ (with $(\pi'_{t})_{t\geq0}$ defined
in Equation (\ref{eq:kallianpur-striebel-02})) 
\[
\pi_{n\tau}=\overline{R}_{n}(\pi_{(n-1)\tau})\,,\,\pi'_{n\tau}=\overline{R}_{n}(\pi'_{(n-1)\tau})
\]
and for $0<m<n$, 
\begin{equation}
\pi_{n\tau}=\overline{R}_{n}\overline{R}_{n-1}\dots\overline{R}_{m}(\pi_{(m-1)\tau})\,,\,\pi'_{n\tau}=\overline{R}_{n}\overline{R}_{n-1}\dots\overline{R}_{m}(\pi'_{(m-1)\tau}).\label{eq:F-K-sequence}
\end{equation}

\subsection{\label{subsec:Representation-of-the}Representation of the optimal
filter as the law of a Markov chain}

Regardless of the notations of the other sections, we suppose we have
a Markov chain $(\mathfrak{X}_{n})_{n\geq0}$ taking values in measured
spaces $E_{0}$, $E_{1}$, \ldots{}, with nonnegative  kernels $\mathfrak{Q}_{1}$,
$\mathfrak{Q}_{2}$, \ldots{} (it might be a non-homogeneous Markov
chain). Suppose we have potentials $\Psi_{1}:\,E_{1}\rightarrow\R_{+}$,
$\Psi_{2}:\,E_{2}\rightarrow\R_{+}$, \ldots{} (measurable functions
with values in $\R_{+}$) and a law $\eta_{0}$ on $E_{0}$. We are
interested in the sequence of probability measures $(\eta_{k})_{k\geq1}$
, respectively on $E_{1}$, $E_{2}$, \ldots{}, defined by
\begin{equation}
\forall k\geq1\,,\,\forall\varphi\in\mathcal{C}_{b}^{+}(E_{k})\,,\,\eta_{k}(f)=\frac{\E_{\eta_{0}}(\varphi(\mathfrak{X}_{k})\prod_{1\leq i\leq k}\Psi_{i}(\mathfrak{X}_{i}))}{\E_{\eta_{0}}(\prod_{1\leq i\leq k}\Psi_{i}(\mathfrak{X}_{i}))}\,,\label{eq:def-F-K}
\end{equation}
where $\eta_{0}\in\mathcal{P}(E_{0})$ and the index $\eta_{0}$ means
we start with $\mathfrak{X}_{0}$ of law $\eta_{0}$ . We will say
that $(\eta_{k})_{k\geq0}$ is a Feynman-Kac sequence on $(E_{k})_{k\geq0}$
based on the transitions $(\mathfrak{Q}_{k})_{k\geq1}$, the potentials
$(\Psi_{k})_{k\geq1}$ and the initial law $\eta_{0}$. Suppose we
have another law $\eta_{0}'$, we then set
\[
\forall k\geq1\,,\,\forall\varphi\in\mathcal{C}_{b}^{+}(E_{k})\,,\,\eta_{k}'(f)=\frac{\E_{\eta_{0}'}(\varphi(\mathfrak{X}_{k})\prod_{1\leq i\leq k}\Psi_{i}(\mathfrak{X}_{i}))}{\E_{\eta_{0}'}(\prod_{1\leq i\leq k}\Psi_{i}(\mathfrak{X}_{i}))}\,.
\]
If the functions $\Psi_{k}$'s are likelihood associated to observations
of a Markov chain with transitions $\mathfrak{Q}_{1}$, $\mathfrak{Q}_{2}$,
\dots{} and initial law $\eta_{0}$, then the measures $\eta_{k}$'s
are optimal filters. We fix $n\geq1$. We would like to express $\eta_{n}$
as the marginal law of some Markov process. We will do so using ideas
from \cite{del-moral-guionnet-2001}. We set, for all $k\in\{1,\dots,n\}$,
\begin{equation}
\mathfrak{R}_{k}(x,dx')=\Psi_{k}(x')\mathfrak{Q}_{k}(x,dx')\,.\label{eq:def-Rfrak_k}
\end{equation}
 We suppose that, for all $k$, $\mathfrak{R}_{k}$ is $\epsilon_{k}$-mixing
(notice that $\mathfrak{Q}_{k}$ being $\epsilon_{k}$-mixing implies
that $\mathfrak{R}_{k}$ is $\epsilon_{k}$-mixing).%
{} By a simple recursion, we have, for all $n$, 
\begin{equation}
\overline{\mathfrak{R}_{1:n}}=\overline{\mathfrak{R}}_{n}\overline{\mathfrak{R}}_{n-1}\dots\overline{\mathfrak{R}}_{1}\,.\label{eq:recursive-representation}
\end{equation}
We set, for all $k\in\{0,1,\dots,n-1\}$,
\[
\Psi_{n|k}(x)=\int_{x_{k+1}\in E_{k+1}}\dots\int_{x_{n}\in E_{n}}\mathfrak{R}_{k+1}(x,dx_{k+1})\prod_{k+2\leq i\leq n}\mathfrak{R}_{i}(x_{i-1},dx_{i})\,.
\]
If $k=n$, we set $\Psi_{n|n}$ to be constant equal to $1$. For
$k\in\{1,2,\dots,n\}$, we set 
\[
\mathfrak{S}_{n|k}(x,dx')=\frac{\Psi_{n|k+1}(x')}{\Psi_{n|k}(x)}\mathfrak{R}_{k+1}(x,dx')\,.
\]
From \cite{del-moral-guionnet-2001}, we get the following result
(a simple proof can also be found in \cite{oudjane-rubenthaler-2005},
Proposition 3.1).
\begin{prop}
\label{prop:representation-FK-sequence}The operators $(\mathfrak{S}_{n|k})_{0\leq k\leq n-1}$
are Markov kernels. For all $k\in\{0,\dots,n-1\}$, $\mathfrak{S}_{n|k}$
is $\epsilon_{k+1}$-mixing. We have
\[
\eta_{n}=\mathfrak{S}_{n|n-1}\mathfrak{S}_{n|n-1}\dots\mathfrak{S}_{n|0}(\Psi_{n|0}\bullet\eta_{0})\,,
\]
\[
\eta_{n}'=\mathfrak{S}_{n|n-1}\mathfrak{S}_{n|n-1}\dots\mathfrak{S}_{n|0}(\Psi_{n|0}\bullet\eta_{0}')\,,
\]

and
\[
\Vert\eta_{n}-\eta_{n}'\Vert\leq\prod_{1\leq k\leq n}(1-\epsilon_{k}^{2})\times\Vert\Psi_{n|0}\bullet\eta_{0}-\Psi_{n|0}\bullet\eta_{0}'\Vert\,.
\]
\end{prop}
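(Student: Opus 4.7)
The plan is to treat the three assertions in turn, since they build on each other. The core calculation is a telescoping identity that makes the backward kernels $\mathfrak{S}_{n|k}$ appear naturally, after which the mixing and contraction properties fall out almost mechanically.

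First I would verify that each $\mathfrak{S}_{n|k}$ is a Markov kernel. This amounts to observing that the denominator $\Psi_{n|k}(x)$ is precisely the normalizing constant of the numerator: by the definition of $\Psi_{n|k}$, one has
\[
\Psi_{n|k}(x) = \int_{E_{k+1}} \Psi_{n|k+1}(x')\,\mathfrak{R}_{k+1}(x,dx'),
\]
so $\mathfrak{S}_{n|k}(x,E_{k+1})=1$. For the $\epsilon_{k+1}$-mixing property, I would use the mixing of $\mathfrak{R}_{k+1}$ with dominating measure $\lambda_{k+1}$ and constants $\epsilon_1^{(k+1)},\epsilon_2^{(k+1)}$ satisfying $\epsilon_1^{(k+1)}\epsilon_2^{(k+1)}=\epsilon_{k+1}^2$. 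Introducing the weighted reference measure $\mu_{n|k+1}(dx')=\Psi_{n|k+1}(x')\lambda_{k+1}(dx')$, the numerator in $\mathfrak{S}_{n|k}(x,A)$ is sandwiched between $\epsilon_1^{(k+1)}\mu_{n|k+1}(A)$ and $(\epsilon_2^{(k+1)})^{-1}\mu_{n|k+1}(A)$, while the normalizer $\Psi_{n|k}(x)$ is sandwiched between $\epsilon_1^{(k+1)}\mu_{n|k+1}(E_{k+1})$ and $(\epsilon_2^{(k+1)})^{-1}\mu_{n|k+1}(E_{k+1})$. Dividing gives the desired two-sided bound with reference probability $\overline{\mu_{n|k+1}}$ and product of constants $\epsilon_{k+1}^2$.

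Second, I would establish the representation formula by a telescoping argument. Plugging the definitions into the iterated kernel, the product of ratios collapses:
\[
\prod_{i=0}^{n-1}\frac{\Psi_{n|i+1}(x_{i+1})}{\Psi_{n|i}(x_i)} \;=\; \frac{\Psi_{n|n}(x_n)}{\Psi_{n|0}(x_0)} \;=\; \frac{1}{\Psi_{n|0}(x_0)},
\]
since $\Psi_{n|n}\equiv 1$. Combined with $(\Psi_{n|0}\bullet\eta_0)(dx_0)=\Psi_{n|0}(x_0)\eta_0(dx_0)/\langle\eta_0,\Psi_{n|0}\rangle$, the factor $\Psi_{n|0}(x_0)$ cancels, and one recovers
\[
\mathfrak{S}_{n|n-1}\cdots\mathfrak{S}_{n|0}(\Psi_{n|0}\bullet\eta_0)(\varphi) \;=\; \frac{\int \eta_0(dx_0)\prod_{i=1}^n\mathfrak{R}_i(x_{i-1},dx_i)\,\varphi(x_n)}{\int\eta_0(dx_0')\,\Psi_{n|0}(x_0')},
\]
whose denominator is exactly $\E_{\eta_0}(\prod_{1\le i\le n}\Psi_i(\mathfrak{X}_i))$ by definition of $\Psi_{n|0}$. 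Hence the right-hand side equals $\eta_n(\varphi)$ from (\ref{eq:def-F-K}), and the identical computation yields the formula for $\eta_n'$.

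Finally, for the total variation bound, I would combine the two preceding points with the contraction property recalled in Subsection \ref{subsec:Notations}: a Markov $\epsilon$-mixing kernel is $(1-\epsilon^2)$-contracting in total variation. Writing
\[
\eta_n-\eta_n' \;=\; \mathfrak{S}_{n|n-1}\cdots\mathfrak{S}_{n|0}\bigl(\Psi_{n|0}\bullet\eta_0\bigr) - \mathfrak{S}_{n|n-1}\cdots\mathfrak{S}_{n|0}\bigl(\Psi_{n|0}\bullet\eta_0'\bigr),
\]
a straightforward induction on $n$ applications of the contraction (one factor $(1-\epsilon_{k+1}^2)$ at each step $k=0,\dots,n-1$) yields
\[
\|\eta_n-\eta_n'\| \;\le\; \prod_{k=1}^n(1-\epsilon_k^2)\,\bigl\|\Psi_{n|0}\bullet\eta_0-\Psi_{n|0}\bullet\eta_0'\bigr\|.
\]
The only mildly delicate point is checking that the telescoping product is valid even on null sets where $\Psi_{n|k}$ could vanish; this is harmless because on such sets the conditional expectation is defined arbitrarily and both sides of the representation give zero mass. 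Once these bookkeeping issues are settled, the rest is essentially algebraic.
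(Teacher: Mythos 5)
Your proof is correct and is, in substance, the standard argument of \cite{del-moral-guionnet-2001} and \cite{oudjane-rubenthaler-2005} that the paper only cites rather than reproves: the normalization of $\mathfrak{S}_{n|k}$ via the recursion $\Psi_{n|k}(x)=\int_{E_{k+1}}\Psi_{n|k+1}(x')\mathfrak{R}_{k+1}(x,dx')$, the telescoping cancellation of the ratios $\Psi_{n|k+1}/\Psi_{n|k}$ against $\Psi_{n|0}\bullet\eta_{0}$, and the iteration of the contraction. One imprecision in the mixing step deserves attention. Your sandwich yields $\epsilon_{k+1}^{2}\,\overline{\mu_{n|k+1}}(\cdot)\leq\mathfrak{S}_{n|k}(x,\cdot)\leq\epsilon_{k+1}^{-2}\,\overline{\mu_{n|k+1}}(\cdot)$, so in the paper's bookkeeping (where $\epsilon$-mixing requires constants with $\epsilon_{1}\epsilon_{2}=\epsilon^{2}$) the product of your constants is $\epsilon_{k+1}^{4}$, not $\epsilon_{k+1}^{2}$ as you assert; read literally this would only certify $\epsilon_{k+1}^{2}$-mixing and hence a $(1-\epsilon_{k+1}^{4})$ contraction, which would weaken the final product. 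What actually carries the proposition is the one-sided Doeblin minorization $\mathfrak{S}_{n|k}(x,\cdot)\geq\epsilon_{k+1}^{2}\,\overline{\mu_{n|k+1}}(\cdot)$ against a \emph{probability} measure, which your computation does deliver and which, by the remark in Subsection \ref{subsec:Notations}, already gives the $(1-\epsilon_{k+1}^{2})$ contraction needed for $\prod_{1\leq k\leq n}(1-\epsilon_{k}^{2})$. So the argument goes through; just state the conclusion of the mixing step as this minorization rather than as a two-sided bound with product of constants $\epsilon_{k+1}^{2}$. Finally, the closing caveat about null sets is unnecessary: the mixing hypothesis gives $\Psi_{n|k}(x)\geq\epsilon_{1}^{(k+1)}\langle\lambda_{k+1},\Psi_{n|k+1}\rangle>0$ for every $x$ by a backward induction, so the denominators never vanish.
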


Following the computations of \cite{oudjane-rubenthaler-2005}, p.
434 (or \cite{oudjane-2000}, p.66), we have, for all measurable $\Psi\,:\,\R^{2}\rightarrow\R^{+}$,
\begin{equation}
\Vert\Psi\bullet\eta_{0}-\Psi\bullet\eta_{0}'\Vert\leq2\int_{x\in E_{0}}\frac{\Psi(x)}{\langle\eta_{0},\Psi\rangle}|\eta_{0}-\eta_{0}'|(dx)\leq2\inf\left(1,\frac{\Vert\Psi\Vert_{\infty}}{\langle\eta_{0},\Psi\rangle}\Vert\eta_{0}-\eta_{0}'\Vert\right)\,.\label{eq:maj-erreur-locale-01}
\end{equation}
For all $x$ in $E_{0}$, as $\mathfrak{R}_{1}$ is $\epsilon_{1}$-mixing,
\begin{eqnarray}
\frac{\Psi_{n|0}(x)}{\langle\eta_{0},\Psi_{n|0}\rangle} & = & \frac{\int_{z\in E_{1}}\mathfrak{R}_{2:n}(z,E_{n})\mathfrak{R}_{1}(x,dz)}{\int_{y\in E_{0}}\int_{z\in E_{1}}\mathfrak{R}_{2:n}(z,E_{n})\mathfrak{R}_{1}(y,dz)\eta_{0}(dy)}\nonumber \\
\mbox{(for some \ensuremath{\epsilon_{1}'}, \ensuremath{\epsilon_{1}''}, \ensuremath{\lambda_{1}}with \ensuremath{\epsilon_{1}'\epsilon_{1}''=\epsilon_{1}^{2}})} & \leq & \frac{\int_{z\in E_{1}}\mathfrak{R}_{2:n}(z,E_{n})\frac{1}{\epsilon'_{1}}\lambda_{1}(dz)}{\int_{z\in E_{1}}\mathfrak{R}_{2:n}(z,E_{n})\epsilon''_{1}\lambda_{1}(dz)}=\frac{1}{\epsilon_{1}^{2}}\,.\label{eq:maj-err-locale-02}
\end{eqnarray}

\section{\label{sec:Robust-approximation-of}Truncated filter}

We introduce in this section a filter built with truncated likelihoods.
We will call it truncated filter or robust filter, the use of the
adjective ``robust'' refers to the fact that it has stability properties
(it appears in the proof of Proposition \ref{prop:approx-par-filtre-robuste}
below).

\subsection{Integrals of the potential \label{subsec:Truncation}}

We are look here at $\widehat{\psi}(y_{0:\tau},x,z)$ for some $x$,
$z$ in $\R$ and a fixed observation $y_{0:\tau}$ between $0$ and
$\tau$. All what will be said is also true for observations between
$k\tau$ and $(k+1)\tau$ (for any $k$ in $\N$). From Equations
(\ref{eq:def-C1}), (\ref{eq:def-Cz}), (\ref{eq:def-Cx}), we see
that $A_{1}^{y_{0:\tau}}$, $B_{1}^{y_{0:\tau}}$ are polynomials
of degree $1$ in $\lambda_{1}$, \ldots{}, $\lambda_{3}$ and that
$C_{1}$ does not depend on $y_{0:\tau}$.  We fix $x$ and $z$ in
$\R$.%
{} Recall that, by Equation (\ref{eq:def-lambdas}), Lemmas \ref{lem:representation-O-U}
and \ref{lem:integral-O-U}, $\lambda_{1}$, $\lambda_{2}$, $\lambda_{3}$
are functions of $y_{0:\tau}$ and that they can be expressed as integrals
of deterministic functions against $dy_{\tau s}$ (this requires some
integrations by parts, see Lemma \ref{lem:lambda-integral}). %
{} Under the law $\widetilde{\p}$ (defined in Equation (\ref{eq:def-Ptilde})),
conditioned to $X_{0}=x$, $X_{\tau}=z$, we can write
\begin{equation}
X_{t}=\left(1-\frac{s}{\tau}\right)x+\frac{s}{\tau}z+\widetilde{B}_{s}-\frac{s}{\tau}\widetilde{B}_{\tau}\,,\label{eq:X-sous-Ptilde}
\end{equation}
where $(\widetilde{B}_{s})_{s\geq0}$ is a Brownian motion, independent
of $W$. And we can write, using integration by parts and Equation
(\ref{eq:def-Cx}) (see Lemma \ref{lem:A-as-integral}),
\begin{equation}
A_{1}^{Y_{0:\tau}}=\int_{0}^{\tau}(f_{1}(s)dW_{s}+f_{2}(s)dX_{s})\,,\label{eq:A-as-gaussian}
\end{equation}
for some deterministic functions $f_{1}$, $f_{2}$ (and the same
goes for $B_{1}^{Y_{0:\tau}}$, see Equation (\ref{eq:def-Cz})). 

We set 
\begin{equation}
p_{1,1}=\left(1-\frac{C_{1}^{2}}{4A_{2}B_{2}}\right)_{+}^{1/2}\,,\,p_{2,1}=-\frac{C_{1}}{2B_{2}}\,,\,p_{2,2}=1\,.\label{eq:def-p}
\end{equation}
\begin{fact}
\label{fact:We-now-fix}We now fix a parameter $\iota\in(1/2,1)$.
From now on, we suppose that the parameter $\tau$ is chosen such
that 
\begin{equation}
\theta\geq1\,,\,A_{2}\geq\frac{h}{4}\,,\,B_{2}\geq\frac{h}{4}\,,C_{1}\leq\frac{h}{8}\,,\,\frac{1}{1+p_{2,1}}-(2+\frac{4}{h})B_{2}p_{2,1}\theta^{1-\iota}>0\,,\,p_{1,1}>\frac{1}{2}\,,\,|p_{2,1}|\leq\frac{1}{2}\,.\label{eq:borne-theta}
\end{equation}
\end{fact}

This is possible because of Lemma \ref{lem:asymptotics-of-A-B-C}
and because this Lemma implies: $p_{2,1}=O(\theta^{-1})$, $p_{1,1}\underset{\theta\rightarrow+\infty}{\longrightarrow}1$.

Let us set
\begin{equation}
\kappa=\left[\begin{array}{cc}
A_{2} & -\frac{C_{1}}{2}\\
-\frac{C_{1}}{2} & B_{2}
\end{array}\right]\,.\label{eq:def-kappa}
\end{equation}
If we take
\begin{equation}
P=\left[\begin{array}{cc}
p_{1,1} & 0\\
p_{2,1} & p_{2,2}
\end{array}\right]=\left[\begin{array}{cc}
\left(1-\frac{C_{1}^{2}}{4A_{2}B_{2}}\right)^{1/2} & 0\\
-\frac{C_{1}}{2B_{2}} & 1
\end{array}\right]\,,\label{eq:def-P}
\end{equation}
then 
\begin{equation}
\kappa=P^{T}\left[\begin{array}{cc}
A_{2} & 0\\
0 & B_{2}
\end{array}\right]P\,.\label{eq:def-mu_1-mu_2}
\end{equation}
We have
\[
P^{-1}=\left[\begin{array}{cc}
\frac{1}{p_{1,1}} & 0\\
-\frac{p_{2,1}}{p_{1,1}} & 1
\end{array}\right]\,.
\]

First, we have to rule out the case where $A_{1}^{y_{0:\tau}}$ and
$B_{1}^{y_{0:\tau}}$ are colinear.
\begin{lem}
\label{lem:A-B-non-colineaire} The quantities $A_{1}^{y_{0:\tau}}$
and $B_{1}^{y_{0:\tau}}$ are not colinear (as functions of $y_{0:\tau}$).
\end{lem}

\begin{proof}
Suppose there exists $\lambda\in\R$ such that $B_{1}^{y_{0:\tau}}=\lambda A_{1}^{y_{0:\tau}}$
for $\lambda_{W}$-almost all $y_{0:\tau}$. %
{} We have, for all $\varphi$ in $\mathcal{C}_{b}^{+}(\R)$, using
Lemma \ref{lem:encadrement-potentiel} (remember Equations (\ref{eq:psi-chapeau-01}),
(\ref{eq:psi-chapeau-02}), (\ref{eq:psi-chapeau-03}), (\ref{eq:psi-chapeau-04}),
(\ref{eq:psi-chapeau-05}))
\begin{multline}
\int_{\mathcal{C}([0;\tau])}\varphi(A_{1}^{y_{0:\tau}})\psi(y_{0:\tau},x,z)\lambda_{W}(dy_{0:\tau})\\
\leq e^{2M|x-z|+\tau\left(M+\frac{M^{2}}{2}\right)}\int_{\mathcal{C}([0;\tau])}\varphi(A_{1}^{y_{0:\tau}})\widehat{\psi}(y_{0:\tau},x,z)\lambda_{W}(dy_{0:\tau})\\
=\sigma_{1}\sigma_{2}e^{2M|x-z|+\tau\left(M+\frac{M^{2}}{2}\right)}\int_{\R}\varphi(t)\exp(-A_{2}x^{2}-B_{2}z^{2}+C_{1}xz+tx+\lambda tz)\Psi'(t)dt\,,\label{eq:not-colinear-01}
\end{multline}
where 
\[
\Psi'(t)=\E^{\p}(\exp(C_{0}^{W_{0:\tau}})|A_{1}^{W_{0:\tau}}=t)\,.
\]
We know the last integral is finite (because $\int_{\mathcal{C}([0,1])}\psi(y_{0:\tau},x,z)=1$
and because of Lemma \ref{lem:encadrement-potentiel}). We introduce
$\Psi'_{1}$ such that
\[
\Psi'(t)=\exp(-\frac{1}{4}(t,\lambda t)\kappa^{-1}(t,\lambda t)^{T})\Psi'_{1}(t)\,,
\]
and 
\[
\forall(t_{1},t_{2})\in\R^{2},\,\mathcal{Q}(t_{1},t_{2})=\exp\left(-\frac{1}{4}(t_{1},t_{2})\kappa^{-1}(t_{1,}t_{2})^{T}\right)\,.
\]

We have, for all $t$, (remember that $\widetilde{\p}$ is defined
in Equation (\ref{eq:def-Ptilde}))
\[
e^{-M|X_{0}-X_{t}|-\frac{Mt}{2}-\frac{M^{2}t}{2}}\leq\left.\frac{d\p}{d\widetilde{\p}}\right|_{\mathcal{F}_{t}}\leq e^{M|X_{t}-X_{0}|+\frac{Mt}{2}}
\]
(this can be deduced from the computations in the proof of Lemma \ref{lem:encadrement-transition}).
Thus, we have, for all $\varphi$ (the first equality being a consequence
of Lemma \ref{lem:encadrement-potentiel})
\begin{eqnarray}
\int_{\mathcal{C}([0;1])}\varphi(A_{1}^{y_{0:\tau}})\psi(y_{0:\tau},x,z)\lambda_{W}(dy_{0:\tau}) & = & \E^{\p}(\varphi(A_{1}^{Y_{0:\tau}})|X_{0}=x,X_{\tau}=z)\nonumber \\
 & = & \frac{\E^{\widetilde{\p}}\left(\left.\varphi(A_{1}^{Y_{0:\tau}})\left.\frac{d\p}{d\widetilde{\p}}\right|_{\mathcal{F}_{\tau}}\right|X_{0}=x,X_{\tau}=z\right)}{\E^{\widetilde{\p}}\left(\left.\left.\frac{d\p}{d\widetilde{\p}}\right|_{\mathcal{F}_{\tau}}\right|X_{0}=x,X_{\tau}=z\right)}\nonumber \\
\mbox{( by Equation (\ref{eq:A-as-gaussian}))} & \geq & e^{-2M|x-z|-\tau\left(M+\frac{M^{2}}{2}\right)}\int_{\R}\varphi(t)\mathcal{Q}''_{x,z}(t)dt\label{eq:not-colinear-02}
\end{eqnarray}
 for some Gaussian density $\mathcal{Q}''_{x,z}$ (the density of
$A_{1}^{Y_{0:\tau}}$ knowing $X_{0}=x$, $X_{\tau}=z$, under $\widetilde{\p}$). 

From Equations (\ref{eq:not-colinear-01}), (\ref{eq:not-colinear-02}),
we deduce that, for $(x,z)$ fixed, we have for almost all $t$, 
\begin{equation}
e^{-2M|x-z|-\tau\left(M+\frac{M^{2}}{2}\right)}\mathcal{Q}''_{x,z}(t)\leq e^{2M|x-z|+\tau\left(M+\frac{M^{2}}{2}\right)}\sigma_{1}\sigma_{2}\mathcal{Q}((t,\lambda t)-2(x,z)\kappa)\Psi'_{1}(t)\,.\label{eq:ineq-dim-1-01}
\end{equation}
In the same way, for $(x,z)$ fixed, we have for almost all $t$,
\begin{equation}
e^{2M|x-z|+\tau\left(M+\frac{M^{2}}{2}\right)}\mathcal{Q}''_{x,z}(t)\geq e^{-2M|x-z|-\tau\left(M+\frac{M^{2}}{2}\right)}\sigma_{1}\sigma_{2}\mathcal{Q}((t,\lambda t)-2(x,z)\kappa)\Psi'_{1}(t)\,.\label{eq:indeq-dim-1-02}
\end{equation}
Looking at Equations (\ref{eq:X-sous-Ptilde}), (\ref{eq:A-as-gaussian}),
we can say that the density $\mathcal{Q}''_{x,z}(t)$ is of the form
\[
\mathcal{Q}''_{x,z}(t)=\frac{1}{\sqrt{2\pi\sigma_{0}^{2}}}\exp\left(-\frac{1}{2\sigma_{0}^{2}}(t-(a_{0}x+b_{0}z))^{2}\right)\,,
\]
with $\sigma_{0}$, $a_{0}$, $b_{0}$ independent of $(x,z)$%
. So, looking at the above inequalities in $(x,z)=(0,0)$, we see
there exists $\epsilon>0$ and a constant $C_{\epsilon}$, such that,
for almost all $t$ in $(-\epsilon,\epsilon)$, 
\begin{equation}
(C_{\epsilon}\sigma_{1}\sigma_{2})^{-1}e^{-\tau(2M+M^{2})}\leq\Psi_{1}'(t)\leq C_{\epsilon}(\sigma_{1}\sigma_{2})^{-1}e^{\tau(2M+M^{2})}\,.\label{eq:dim-1-encadrement-psi-1}
\end{equation}

{} For any $t$, the quantities $\log(\mathcal{Q}''_{x,z}(t))$, $\log(\mathcal{Q}((t,\lambda t)-2(x,z)\kappa))$
are polynomials in $x$, $z$, of degree less than $2$. Using the
above remarks and studying adequate sequences $(x_{n},z_{n})_{n\geq0}$
(for example, with $x_{n}\underset{n\rightarrow+\infty}{\longrightarrow}+\infty$,
$z_{n}$ remaining in a neighborhood of $0$), one can show that the
coefficients in $x^{2}$, $z^{2}$ and $xz$ in these two polynomials
the same. We then have
\[
\frac{a_{0}^{2}}{2\sigma_{0}^{2}}=A_{2}\,,\,\frac{b_{0}^{2}}{2\sigma_{0}^{2}}=B_{2}\,,\,\frac{a_{0}b_{0}}{\sigma_{0}^{2}}=C_{1}\,.
\]
By Equation (\ref{eq:borne-theta}), we have
\[
\frac{a_{0}b_{0}}{\sigma_{0}^{2}}=2\sqrt{A_{2}B_{2}}\geq\frac{h}{2}>\frac{h}{8}
\]
and $C_{1}\leq h/8$, which is not possible, hence the result.
\end{proof}
We can now write for any test function $\varphi$ in $\mathcal{C}_{b}^{+}([0,\tau])$
(remember Equation (\ref{eq:pol-psi-chapeau}))
\begin{multline}
\int_{\mathcal{\mathcal{C}}([0,\tau])}\varphi(A_{1}^{y_{0:\tau}},B_{1}^{y_{0:\tau}})\widehat{\psi}(y_{0:\tau},x,z)\lambda_{W}(dy_{0:\tau})=\\
\sigma_{1}\sigma_{2}\int_{\R^{k}}\varphi(t_{1},t_{2})\exp\left[-A_{2}x^{2}-B_{2}z^{2}+C_{1}xz+t_{1}x\right.\left.+t_{2}z\right]\times\Psi(t_{1},t_{2})dt_{1}dt_{2}\,,\label{eq:integrale-potentiel}
\end{multline}
where 
\[
\Psi(t_{1},t_{1})=\E^{\p}(\exp(C_{0}^{W_{0:\tau}})|A_{1}^{W_{0:\tau}}=t_{1},B_{1}^{W_{0:\tau}}=t_{2})\,.
\]
 We know the integral is finite (because $\int_{\mathcal{C}([0,1])}\psi(y_{0:\tau},x,z)\lambda_{W}(dy_{0:\tau})=1$
and because of Lemma \ref{lem:encadrement-potentiel}). Let us define
$\Psi_{1}$ by the formula 
\[
\Psi(t_{1},t_{2})=\exp\left(-\frac{1}{4}(t_{1},t_{2})\kappa^{-1}(t_{1},t_{2})^{T}\right)\Psi_{1}(t_{1},t_{2})\,.
\]
The next result tells us that, somehow, $\log(\Psi_{1}(t_{1},t_{2}))$
is negligible before $t_{1}^{2}+t_{2}^{2}$ (when $(t_{1},t_{2})\rightarrow+\infty$).
\begin{lem}
\label{lem:borne-Psi_1}There exists a constant $C'_{1}(h,\tau)$
(continuous in $(h,\tau)$) and $\epsilon>0$ such that for all $(x,z)$
and for almost all $(t_{1},t_{2})$ in $B(2(x,z)\kappa,\epsilon)$,
\begin{multline}
\frac{1}{C_{1}'(h,\tau)}\exp\left(-4M|x-z|-\tau\left(2M+M^{2}\right)\right)\leq\Psi_{1}(t_{1},t_{2})\\
\leq C_{1}'(h,\tau)\exp\left(4M|x-z|+\tau\left(2M+M^{2}\right)\right)\,.\label{eq:encadrement-Psi_1}
\end{multline}
\end{lem}

\begin{proof}
We fix $(x,z)$ in $\R^{2}$. Similarly as in Equation (\ref{eq:not-colinear-01}),
we get, using Lemma \ref{lem:encadrement-potentiel}, for all $\varphi\in\mathcal{C}_{b}^{+}(\R^{2})$,

\begin{multline}
\int_{\mathcal{C}([0,\tau])}\varphi(A_{1}^{y_{0:\tau}},B_{1}^{y_{0:\tau}})\psi(y_{0:\tau},x,z)\lambda_{W}(dy_{0:\tau})\\
\leq\int_{\mathcal{C}([0,\tau])}\varphi(A_{1}^{y_{0:\tau}},B_{1}^{y_{0:\tau}})e^{2M|x-z|+\tau\left(M+\frac{M^{2}}{2}\right)}\widehat{\psi}(y_{0:\tau},x,z)\lambda_{W}(dy_{0:\tau})\\
=\sigma_{1}\sigma_{2}e^{2M|x-z|+\tau\left(M+\frac{M^{2}}{2}\right)}\int_{\R^{2}}\varphi(t_{1},t_{2})e^{-A_{2}x^{2}-B_{2}z^{2}+C_{1}xz+t_{1}x+t_{2}z}\Psi(t_{1},t_{2})dt_{1}dt_{2}\\
=\sigma_{1}\sigma_{2}e^{2M|x-z|+\tau\left(M+\frac{M^{2}}{2}\right)}\int_{\R^{2}}\varphi(t_{1},t_{2})\exp\left(-\frac{1}{4}((t_{1},t_{2})-2(x,z)\kappa)\kappa^{-1}((t_{1},t_{2})^{T}-2\kappa(x,z)^{T})\right)\\
\times\Psi_{1}(t_{1},t_{2})dt_{1}dt_{2}\,.\label{eq:formule-psi_chapeau}
\end{multline}
Similarly as in Equation (\ref{eq:not-colinear-02}), we get, for
all $\varphi$, 
\[
\int_{\mathcal{C}([0,\tau])}\varphi(A_{1}^{y_{0:\tau}},B_{1}^{y_{0:\tau}})\psi(y_{0:\tau},x,z)\lambda_{W}(dy_{0:\tau})\leq e^{2M|z-x|+\tau\left(M+\frac{M^{2}}{2}\right)}\int_{\R^{2}}\varphi(t_{1},t_{2})\mathcal{Q}'_{x,z}(t_{1},t_{2})dt_{1}dt_{2}\,,
\]
\[
e^{-2M|z-x|-\tau\left(M+\frac{M^{2}}{2}\right)}\int_{\R^{2}}\varphi(t_{1},t_{2})\mathcal{Q}'_{x,z}(t_{1},t_{2})dt_{1}dt_{2}\leq\int_{\mathcal{C}([0,\tau])}\varphi(A_{1}^{y_{0:\tau}},B_{1}^{y_{0:\tau}})\psi(y_{0:\tau},x,z)\lambda_{W}(dy_{0:\tau})\,,
\]
for some Gaussian density $\mathcal{Q}'_{x,z}$ with covariance matrix
which does not depend on $x$, $z$ (see Equation (\ref{eq:A-as-gaussian})).
This is the density of $(A_{1}^{Y_{0:\tau}},B_{1}^{Y_{0:\tau}})$
knowing $X_{0}=x$ and $X_{\tau}=z$, under $\widetilde{\p}$. %
{} We then have, a.s. in $(t_{1},t_{2})$ (for the Lebesgue measure),
\begin{equation}
\mathcal{Q}'_{x,z}(t_{1},t_{2})\leq\sigma_{1}\sigma_{2}e^{4M|x-z|+\tau\left(2M+M^{2}\right)}\mathcal{Q}((t_{1},t_{2})-2(x,z)\kappa)\Psi_{1}(t_{1},t_{2})\,.\label{eq:maj-Q-prime}
\end{equation}
Using the lower bound in the inequality in Lemma \ref{lem:encadrement-potentiel},
we get in the same way, a.s. in $(t_{1},t_{2})$,
\begin{equation}
\mathcal{Q}'_{x,z}(t_{1},t_{2})\geq\sigma_{1}\sigma_{2}e^{-4M|x-z|-\tau\left(2M+M^{2}\right)}\mathcal{Q}((t_{1},t_{2})-2(x,z)\kappa)\Psi_{1}(t_{1},t_{2})\,.\label{eq:min-Q-prime}
\end{equation}
 We deduce from Equation (\ref{eq:min-Q-prime}), that there exists
$\epsilon_{1}>0$ such that, for all $(x,z)$ and for almost all $(t_{1},t_{2})$
in $B(2(x,z)\kappa,\epsilon_{1})$ 
\[
\Psi_{1}(t_{1},t_{2})\leq C'_{1}(\tau,h)e^{4M|x-z|+\tau\left(2M+M^{2}\right)}\,,
\]
for some function $C_{1}'(\tau,h)$ of the parameters $\tau$, $h$
(continuous in $(h,\tau)$). Looking at Equations (\ref{eq:X-sous-Ptilde}),
(\ref{eq:A-as-gaussian}), one can also see that $\mathcal{Q}'_{x,z}$
reaches its maximum at $(x,z)\kappa'$,  where $\kappa'$ is fixed
in $\mathcal{M}_{2,2}(\R)$ (the set of $2\times2$ matrices with
coefficients in $\R$). From Equation (\ref{eq:maj-Q-prime}), we
get that there exists $\epsilon_{2}>0$ such that, for all $x$, $z$
and for almost all $(t_{1},t_{2})$ in $B((x,z)\kappa',\epsilon_{2}')$,
\begin{multline*}
\mathcal{Q}((t_{1,}t_{2})-2(x,z)\kappa)\geq\frac{1}{2}\mathcal{Q}'_{x,z}((x,z)\kappa')(\sigma_{1}\sigma_{2})^{-1}e^{-4M|x-z|-\tau\left(4M+2M^{2}\right)}(C_{1}'(h,\tau))^{-1}\\
\times\exp\left(-4M\left|\frac{1}{2}(x,z)\kappa'\kappa^{-1}(1,-1)^{T}\right|\right)\,,
\end{multline*}
and so, by continuity, 
\begin{multline*}
\mathcal{Q}((x,z)\kappa'-2(x,z)\kappa)\geq\frac{1}{2}\mathcal{Q}'_{x,z}((x,z)\kappa')(\sigma_{1}\sigma_{2})^{-1}e^{-4M|x-z|-\tau\left(4M+2M^{2}\right)}(C_{1}'(h,\tau))^{-1}\\
\times\exp\left(-4M\left|\frac{1}{2}(x,z)\kappa'\kappa(1,-1)^{T}\right|\right)\,,
\end{multline*}
 If $\kappa'\neq2\kappa$, we can find a sequence $(x_{n},z_{n})$
such that $x_{n}^{2}+z_{n}^{2}\convn+\infty$ and 
\[
\log(\mathcal{Q}((x_{n},z_{n})\kappa'-2(x_{n},z_{n})\kappa))\preceq-(x_{n}^{2}+z_{n}^{2})\,,
\]
whereas
\begin{multline*}
\log\left(\mathcal{Q}'_{x_{n},z_{n}}((x_{n},z_{n})\kappa')(\sigma_{1}\sigma_{2})^{-1}e^{-4M|x_{n}-z_{n}|-\tau\left(4M+2M^{2}\right)}\exp\left(-4M\left|\frac{1}{2}(x,z)\kappa'\kappa^{-1}(1,-1)^{T}\right|\right)\right)\\
\succeq-|x_{n}|-|z_{n}|\,,
\end{multline*}
which is not possible. So $\kappa'=2\kappa$. 

So, we get from Equations (\ref{eq:maj-Q-prime}), (\ref{eq:min-Q-prime})
that there exists $\epsilon_{3}>0$ such that for all $x$, $z$,
and for almost all $(t_{1},t_{2})$ in $B(2(x,z)\kappa,\epsilon_{3})$,
\[
C_{1}'(h,\tau)e^{4M|x-z|+\tau(2M+M^{2})}\geq\Psi_{1}(t_{1},t_{2})\geq\frac{e^{-4M|x-z|-\tau(2M+M^{2})}}{C_{1}'(h,\tau)}
\]
(with, possibly, a new $C_{1}'(h,\tau)$).
\end{proof}
\begin{lem}
\label{lem:borne-integrale} If we have a set $\mathcal{A}=\{y_{0:\tau}\in\mathcal{C}([0;\tau])\,:\,(A_{1}^{y_{0:\tau}},B_{1}^{y_{0:\tau}})\in\mathcal{B}\}$
for some measurable subset $\mathcal{B}$ of $\R^{2}$, then
\begin{multline*}
\int_{\mathcal{A}}\widehat{\psi}(y_{0:\tau},x,z)\lambda_{W}(dy_{0:\tau})\leq\sigma_{1}\sigma_{2}C_{1}'(h,\tau)\\
\times\int_{(t_{2},t_{2})\in\mathcal{B}}\exp\left\{ -\frac{A_{2}^{-1}}{4}\left(\left|\frac{t_{1}}{p_{1,1}}-\frac{p_{2,1}t_{2}}{p_{1,1}}-2A_{2}p_{1,1}x\right|-4M\left|\frac{1}{p_{1,1}}+\frac{p_{2,1}}{p_{1,1}}\right|\right)_{+}^{2}\right.\\
\left.-\frac{B_{2}^{-1}}{4}\left(\left|t_{2}-2B_{2}(p_{2,1}x+z)\right|-4M\right)_{+}^{2}\right\} \\
\times\exp\left(4M^{2}(1,-1)\kappa^{-1}(1,-1)^{T}+\tau\left(2M+M^{2}\right)+4M|x-z|\right)dt_{1}dt_{2}\,,
\end{multline*}
and
\begin{multline*}
\int_{\mathcal{A}}\widehat{\psi}(y_{0:\tau},x,z)\lambda_{W}(dy_{0:\tau})\geq\sigma_{1}\sigma_{2}C_{1}'(h,\tau)^{-1}\\
\times\int_{(t_{2},t_{2})\in\mathcal{B}}\exp\left\{ -\frac{A_{2}^{-1}}{4}\left(\left|\frac{t_{1}}{p_{1,1}}-\frac{p_{2,1}t_{2}}{p_{1,1}}-2A_{2}p_{1,1}x\right|+4M\left|\frac{1}{p_{1,1}}+\frac{p_{2,1}}{p_{1,1}}\right|\right)^{2}\right.\\
\left.-\frac{B_{2}^{-1}}{4}\left(\left|t_{2}-2B_{2}(p_{2,1}x+z)\right|+4M\right)^{2}\right\} \\
\times\exp\left(4M^{2}(1,-1)\kappa^{-1}(1,-1)^{T}-\tau\left(2M+M^{2}\right)-4M|x-z|\right)dt_{1}dt_{2}\,.
\end{multline*}
\end{lem}

\begin{proof}
{} We have (computing as in Equation (\ref{eq:formule-psi_chapeau}))
\begin{multline*}
\int_{\mathcal{A}}\widehat{\psi}(y_{0:\tau},x,z)\lambda_{W}(dy_{0:\tau})\\
=\sigma_{1}\sigma_{2}\int_{\mathcal{B}}\exp\left(-(x,z)\kappa(x,z)^{T}+t_{1}x+t_{2}z\right)\exp\left(-\frac{1}{4}(t_{1},t_{2})\kappa{}^{-1}(t_{1},t_{2})^{T}\right)\Psi_{1}(t_{1},t_{2})dt_{1}dt_{2}\\
\mbox{(by Lemma \ref{lem:borne-Psi_1}, for a complete proof see Lemma \ref{lem:tech-03})}\\
\leq\sigma_{1}\sigma_{2}\int_{\mathcal{B}}\exp\left(-\frac{1}{4}[(t_{1},t_{2})^{T}-2\kappa(x,z)^{T}]^{T}\kappa^{-1}[(t_{1},t_{2})^{T}-2\kappa(x,z)^{T}]\right)\\
\times C_{1}'(h,\tau)\exp\left(2M\times|(1,-1)\kappa^{-1}(t_{1},t_{2})^{T}|+\tau\left(2M+M^{2}\right)\right)dt_{1}dt_{2}\,,
\end{multline*}
and we can bound by below by 
\begin{multline*}
\sigma_{1}\sigma_{2}\int_{\mathcal{B}}\exp\left(-\frac{1}{4}[(t_{1},t_{2})^{T}-2\kappa(x,z)^{T}]^{T}\kappa^{-1}[(t_{1},t_{2})^{T}-2\kappa(x,z)^{T}]\right)\\
\times\frac{1}{C_{1}'(h,\tau)}\exp\left(-2M\times|(1,-1)\kappa^{-1}(t_{1},t_{2})^{T}|-\tau\left(2M+M^{2}\right)\right)dt_{1}dt_{2}\,,
\end{multline*}
For $(t_{1},t_{2})\in\R^{2}$, we have, for any $\delta\in\{-1,1\}$
\begin{multline*}
\exp\left(-\frac{1}{4}[(t_{1},t_{2})^{T}-2\kappa(x,z)^{T}]^{T}\kappa^{-1}[(t_{1},t_{2})^{T}-2\kappa(x,z)^{T}]+2\delta M\times(1,-1)\kappa^{-1}(t_{1},t_{2})^{T}\right)\\
=\exp\left(-\frac{1}{4}[(t_{1},t_{2})^{T}-2\kappa(x,z)^{T}-4\delta M(1,-1)^{T}]^{T}\kappa^{-1}[(t_{1},t_{2})^{T}-2\kappa(x,z)^{T}-4\delta M(1,-1)^{T}]\right.\\
\left.+4M^{2}(1,-1)\kappa^{-1}(1,-1)^{T}+4\delta M(x,z)(1,-1)^{T}\right)\\
=\exp\left(-\frac{1}{4}[(P^{-1})^{T}(t_{1},t_{2})^{T}-2\left[\begin{array}{cc}
A_{2} & 0\\
0 & B_{2}
\end{array}\right]P(x,z)^{T}-4\delta M(P^{-1})^{T}(1,-1)^{T}]^{T}\right.\\
\times\left.\left[\begin{array}{cc}
A_{2}^{-1} & 0\\
0 & B_{2}^{-1}
\end{array}\right][(P^{-1})^{T}(t_{1},t_{2})^{T}-2\left[\begin{array}{cc}
A_{2} & 0\\
0 & B_{2}
\end{array}\right]P(x,z)^{T}-4\delta M(P^{-1})^{T}(1,-1)^{T}]\right)\\
\times\exp(4M^{2}(1,-1)\kappa^{-1}(1,-1)^{T}+4\delta M(x,z)(1,-1)^{T})\,.
\end{multline*}
From there, we get the result. 
\end{proof}

\subsection{Truncation\label{subsec:Truncation-1}}

In the following, the parameter $\Delta>0$ is to be understood as
a truncation level. For $k\geq0$ and $\Delta>0$, we set (for all
$b$)
\begin{equation}
C_{k+1}(\Delta,b)=\left\{ z\,:\,\left|2B_{2}(1+p_{2,1})z-b\right|\leq\Delta\right\} \label{eq:def-compact}
\end{equation}
(which indeed does not depend on $k$),
\begin{equation}
C_{k+1}(\Delta)=C_{k+1}(\Delta,B_{1}^{y_{k\tau:(k+1)\tau}})\label{eq:def-compact-suite}
\end{equation}
 and
\begin{equation}
m_{k+1}(b)=\frac{b}{2B_{2}(1+p_{2,1})}\,,\label{eq:def-m_k}
\end{equation}
(which indeed does not depend on $k$) and
\[
m_{k+1}=m_{k+1}(B_{1}^{Y_{k\tau:(k+1)\tau}})\,.
\]
We suppose that $m_{0}$ is a point in the support of $\pi_{0}$ (the
law of $X_{0}$) and we set 
\[
C_{0}(\Delta)=\left[m_{0}-\frac{\Delta}{2B_{2}(1+p_{2,1})}\,,\,m_{0}+\frac{\Delta}{2B_{2}(1+p_{2,1})}\right].
\]
Under Hypothesis \ref{hyp:sur-pi_0}, there exists constants $C_{0}$
and $C_{0}'$ such that
\[
\pi_{0}(C_{0}(\Delta)^{\complement})\leq C_{0}'e^{-C_{0}\Delta^{2}}\,,\,\forall\Delta>0\,.
\]

From Equation (\ref{eq:borne-theta}) and Lemma \ref{lem:variation-const},
we see that there exists a universal constant $C$ such that 
\begin{equation}
|m_{k}-m_{k-1}|\leq\begin{cases}
C(M\tau+\mathcal{V}_{(k-2)\tau,k\tau}+(1+\frac{1}{\theta})\cW_{(k-2)\tau,k\tau}) & \mbox{ if }k\geq2\,,\\
|m_{0}|+C(M\tau+\mathcal{V}_{0,2\tau}+(1+\frac{1}{\theta})\cW_{0,2\tau}) & \mbox{ if }k=1\,.
\end{cases}\label{eq:variation-m_k}
\end{equation}
We set 
\begin{equation}
d(\Delta)=\frac{\Delta}{1+p_{2,1}}-(2+\frac{16}{h})B_{2}p_{2,1}\theta^{1-\iota}\Delta-4M\,,\label{eq:def-d-Delta}
\end{equation}
(because of Equation (\ref{eq:borne-theta}), we have $d(\Delta)\underset{\Delta\rightarrow+\infty}{\longrightarrow}+\infty$)
and 
\begin{multline}
T(\Delta)=\frac{C\sqrt{\tau}\exp\left(-\frac{1}{2}\left(\frac{\theta^{1-\iota}\Delta}{12C\sqrt{2\tau}}\right)^{2}\right)}{\theta^{1-\iota}\Delta}\\
+\left(M\frac{(1+p_{2,1})}{p_{1,1}}+\sqrt{A_{2}}\right)C_{1}'(h,\tau)\sigma_{1}\sigma_{2}p_{1,1}\frac{B_{2}e^{26M\tau+\frac{7\tau M}{2}+40M^{2}/h}}{d(\Delta)}\exp\left(-\frac{1}{4B_{2}}d(\Delta)^{2}\right)+e^{-C_{0}\Delta^{2}}\,.\label{eq:def-T-Delta}
\end{multline}

We define, for all $k\geq1$, $x$ and $x'$ in $\R$ (recall that
$\psi_{k}$ is defined in Equation \ref{eq:def-psi_k}) 
\begin{equation}
\psi_{k}^{\Delta}(x,x')=\psi_{k}(x,x')\1_{C_{k}(\Delta)}(x')\,,\label{eq:def-psi-Delta}
\end{equation}
 
\begin{equation}
D_{k}=|m_{k}-m_{k-1}|\,,\label{eq:def-D_k}
\end{equation}
and for $D\geq0$, 
\begin{equation}
\xi_{1}(D,\Delta)=\frac{1}{\sqrt{2\pi\tau}}\exp\left(-\frac{\left(D+\frac{\Delta}{B_{2}(1+p_{2,1})}\right)^{2}}{2\tau}\right)\exp\left(-M\left(D+\frac{\Delta}{B_{2}(1+p_{2,1})}\right)-\left(\frac{\tau}{2}+\frac{\tau^{2}}{2}\right)M\right)\,,\label{eq:def-xi_1}
\end{equation}
\begin{equation}
\xi_{2}(D,\Delta)=\frac{1}{\sqrt{2\pi\tau}}\exp\left(-\frac{\left(\left(D-\frac{\Delta}{B_{2}(1+p_{2,1})}\right)_{+}\right)^{2}}{2\tau}\right)\exp\left(M\left(D+\frac{\Delta}{B_{2}(1+p_{2,1})}\right)+\frac{\tau}{2}M\right)\,,\label{eq:def-xi_2}
\end{equation}
and, 
\begin{equation}
R_{k}^{\Delta}(x,dx')=\begin{cases}
\psi_{k}^{\Delta}(x,x')Q(x,dx') & \mbox{ if }x\in C_{k-1}(\Delta)\,,\\
\psi_{k}^{\Delta}(x,x')\xi_{1}(D_{k},\Delta)dx' & \mbox{ if }x\notin C_{k-1}(\Delta)\,.
\end{cases}\label{eq:def-R-Delta}
\end{equation}
We define $(\pi_{n}^{\Delta})_{n\geq0}$ by the following
\begin{equation}
\begin{cases}
\pi_{0}^{\Delta}=\pi_{0}\\
\pi_{k\tau}^{\Delta}=\overline{R}_{k}^{\Delta}\overline{R}_{k-1}^{\Delta}\dots\overline{R}_{1}^{\Delta}(\pi_{0}) & \mbox{ for all }k\geq1\,.
\end{cases}\label{eq:def-truncated-filter}
\end{equation}
The next lemma tells us that the measures $\pi_{k}$ are concentrated
on the compacts $C_{k}(\Delta)$.
\begin{lem}
If
\begin{equation}
\theta^{1-\iota}\Delta>3|m_{0}|+3CM\tau\text{ and }d(\Delta)>0\,,\label{eq:hyp-Delta}
\end{equation}
then we have, for all $k\geq0$,
\[
\E(\pi_{k\tau}(C_{k}(\Delta)^{\complement})\preceq T(\Delta)\,.
\]
\end{lem}

\begin{proof}
We suppose $k\geq1$ (the proof is similar for $k=0$). For a measure
$\mu$ in $\mathcal{M}^{+}(\R)$, we define
\begin{equation}
\widetilde{Q}\mu(dx,dx')=\mu(dx)Q(x,dx')\,,\,\forall x,x'\in\R,\label{eq:def-extension-Qtilde}
\end{equation}
(recall $\widetilde{Q}$ has been defined as a Markov kernel on $\R^{2}$,
so the above is an extension of the definition of $\widetilde{Q}$).
We have
\begin{multline*}
\pi_{k}(C_{k}(\Delta)^{\complement})\times\1_{|m_{k}-m_{k-1}|\leq\Delta\theta^{1-\iota}}=\\
\1_{|m_{k}-m_{k-1}|\leq\Delta\theta^{1-\iota}}\int_{(x,x')\in\R^{2}}\frac{\psi_{k}(x,x')}{\langle\widetilde{Q}\pi_{k-1},\psi_{k}\rangle}\1_{C_{k}(\Delta)^{\complement}}(x)(\1_{C_{k-1}(2\Delta)}(x)+\1_{C_{k-1}(2\Delta)^{\complement}}(x))\widetilde{Q}\pi_{k-1}(dx,dx')\,,
\end{multline*}
{}  and (using the same computations as in \cite{legland-oudjane-2003},
proof of Proposition 5.3, \cite{oudjane-2000}, p. 66) 
\begin{multline}
\E\left(\left.\1_{|m_{k}-m_{k-1}|\leq\Delta\theta^{1-\iota}}\int_{\R^{2}}\frac{\psi_{k}(x,x')}{\langle\widetilde{Q}\pi_{k-1},\psi_{k}\rangle}\1_{C_{k}(\Delta)^{\complement}}(x')\1_{C_{k-1}(2\Delta)}(x)\widetilde{Q}\pi_{k-1}(dx,dx')\right|Y_{0:(k-1)\tau}\right)\\
=\int_{y\in\mathcal{C}([0,\tau])}\1_{|m_{k}(B_{1}^{y})-m_{k-1}|\leq\Delta\theta^{1-\iota}}\\
\times\left(\int_{\R^{2}}\frac{\psi(y,x,x')}{\langle\widetilde{Q}\pi_{k-1},\psi(y,.,.)\rangle}\1_{C_{k}(\Delta,B_{1}^{y})^{\complement}}(x')\1_{C_{k-1}(2\Delta)}(x)\widetilde{Q}\pi_{k-1}(dx,dx')\right)\\
\times\left(\int_{(u,u')\in\R^{2}}\widetilde{Q}\pi_{k-1}(u,du')\psi(y,u,u')\right)\lambda_{W}(dy)\\
\mbox{(by Fubini's theorem)}\\
=\int_{y\in\mathcal{C}([0,1])}\1_{|m_{k}(B_{1}^{y})-m_{k-1}|\leq\Delta\theta^{1-\iota}}\int_{\R^{2}}\psi(y,x,x')\1_{C_{k}(\Delta,B_{1}^{y})^{\complement}}(x')\1_{C_{k-1}(2\Delta)}(x)\widetilde{Q}\pi_{k-1}(dx,dx')\lambda_{W}(dy)\\
\mbox{(using Lemma \ref{lem:encadrement-potentiel} and Lemma \ref{lem:borne-integrale})}\\
\leq\sigma_{1}\sigma_{2}C_{1}'(h,\tau)\int_{(x,x')\in\R^{2}}\int_{(t_{1},t_{2})\in\R^{2}\,:\,|m_{k}(t_{2})-m_{k-1}|\leq\Delta\theta^{1-\iota}}e^{6M|x-x'|+\tau(3M+3M^{2}/2)+4M^{2}\left|(1,-1)\kappa^{-1}(1,-1)^{T}\right|}\\
\times e^{-\frac{1}{4A_{2}}\left(\left|\frac{t_{1}}{p_{1,1}}-\frac{p_{2,1}t_{2}}{p_{1,1}}-2A_{2}p_{1,1}x\right|-4M\left|\frac{1+p_{2,1}}{p_{1,1}}\right|\right)_{+}^{2}}\times e^{-\frac{1}{4B_{2}}\left(\left|t_{2}-2B_{2}(p_{2,1}x+x')\right|-4M\right)_{+}^{2}}\\
\times\1_{C_{k}(\Delta,t_{2})^{\complement}}(x')\1_{C_{k-1}(2\Delta)}(x)dt_{1}dt_{2}\widetilde{Q}\pi_{k-1}(dx,dx')\,.\label{eq:maj-compact-01}
\end{multline}
By a similar computation, we get
\begin{multline*}
\E\left(\left.\int_{\R^{2}}\frac{\psi_{k}(x,x')}{\langle\widetilde{Q}\pi_{k-1},\psi_{k}\rangle}\1_{C_{k}(\Delta)^{\complement}}(x)\1_{C_{k-1}(2\Delta)^{\complement}}(x)\widetilde{Q}\pi_{k-1}(dx,dx')\right|Y_{0:(k-1)\tau}\right)\\
=\int_{y\in\mathcal{C}([0,1])}\int_{\R^{2}}\psi(y,x,x')\1_{C_{k}(\Delta,B_{1}^{y})^{\complement}}(x')\1_{C_{k-1}(2\Delta)^{\complement}}(x)\widetilde{Q}\pi_{k-1}(dx,dx')\lambda_{W}(dy)\\
\leq\int_{\R^{2}}\1_{C_{k-1}(2\Delta)^{\complement}}(x)\widetilde{Q}\pi_{k-1}(dx,dx')\leq\pi_{k-1}(C_{k-1}(2\Delta)^{\complement})\,.
\end{multline*}
 For all $t_{2}$ such that $|m_{k}(t_{2})-m_{k-1}|\leq\theta^{1-\iota}\Delta$,
for $x\in C_{k-1}(2\Delta)$, $x'\in C_{k}(\Delta,t_{2})^{\complement}$,
\begin{multline}
\left|t_{2}-2B_{2}(p_{2,1}x+x')\right|\\
=\left|-2B_{2}x'+\frac{t_{2}}{p_{2,1}+1}+\frac{p_{2,1}}{p_{2,1}+1}t_{2}-2B_{2}p_{2,1}x\right|\\
\geq\frac{\Delta}{1+p_{2,1}}-2B_{2}p_{2,1}|m_{k}(t_{2})-m_{k-1}|-2B_{2}p_{2,1}|m_{k-1}-x|\\
\mbox{(using Equation (\ref{eq:borne-theta}))}\geq\frac{\Delta}{1+p_{2,1}}-(2+\frac{16}{h})B_{2}p_{2,1}\theta^{1-\iota}\Delta\,.\label{eq:star-01}
\end{multline}
 So
\begin{multline*}
\E(\pi_{k}(C_{k}(\Delta)^{\complement}))\leq\p(|m_{k}-m_{k-1}|\geq\theta^{1-\iota}\Delta)+\E(\pi_{k-1}(C_{k-1}(2\Delta)^{\complement}))\\
+\sigma_{1}\sigma_{2}C_{1}'(h,\tau)\int_{(x,x')\in\R^{2}}\int_{(t_{1,}t_{2})\in\R^{2}\,:\,\left|t_{2}-2B_{2}(p_{2,1}x+x')\right|>\frac{\Delta}{1+p_{2,1}}-(2+\frac{4}{h})\Delta\theta^{1-\iota}B_{2}p_{2,1}}\\
e^{6M|x-x'|+\tau(3M+3M^{2}/2)+4M^{2}(1,-1)\kappa^{-1}(1,-1)^{T}}\\
\times e^{-\frac{1}{4A_{2}}\left(\left|\frac{t_{1}}{p_{1,1}}-\frac{p_{2,1}t_{2}}{p_{1,1}}-2A_{2}p_{1,1}x\right|-4M\left|\frac{1+p_{2,1}}{p_{1,1}}\right|\right)_{+}^{2}}\\
\times e^{-\frac{1}{4B_{2}}\left(\left|t_{2}-2B_{2}(p_{2,1}x+x')\right|-4M\right)_{+}^{2}}dt_{1}dt_{2}\widetilde{Q}\pi_{k-1}(dx,dx')\,.
\end{multline*}

We have, for all $x\geq0$, 
\begin{eqnarray*}
\p(\mathcal{V}_{0,2\tau}\geq x) & = & \p(\sup_{s\in[0,2\tau]}V_{s}-\inf_{s\in[0,2\tau]}V_{s}\geq x)\\
 & \leq & \p(\sup_{s\in[0,2\tau]}V_{s}\geq x/2)+\p(-\inf_{s\in[0,2\tau]}V_{s}\geq x/2)\\
 & = & 4\p(V_{2\tau}\geq x/2)\\
 & = & 2\p(2|V_{2\tau}|\geq x)\,.
\end{eqnarray*}
And so, we can bound (for all $x$)
\begin{equation}
\p(\cV_{(k-2)_{+}\tau,k\tau}\geq x)\leq2\p(2|W_{2\tau}|\geq x)\,,\label{eq:maj-Vcal}
\end{equation}
\begin{equation}
\p(\cW_{(k-2)_{+}\tau,k\tau}\geq x)\leq2\p(2|W_{2\tau}|\geq x)\,.\label{eq:maj-Wcal}
\end{equation}
 So (with the constant $C$ defined in Equation (\ref{eq:variation-m_k})),
using the inequality
\begin{equation}
\forall z>0\,,\,\int_{z}^{+\infty}\frac{\exp\left(-\frac{t^{2}}{2\sigma^{2}}\right)}{\sqrt{2\pi\sigma^{2}}}dt\leq\frac{\sigma\exp\left(-\frac{z^{2}}{2\sigma^{2}}\right)}{z\sqrt{2\pi}}\,,\label{eq:queue-gaussienne}
\end{equation}
and using Equation (\ref{eq:variation-m_k}), as $\theta^{1-\iota}\Delta>3|m_{0}|+3CM\tau$,
we get
\begin{eqnarray}
\p(|m_{k}-m_{k-1}|\geq\theta^{1-\iota}\Delta) & \leq & 4\p\left(2\times|W_{2\tau}|\ge\frac{\theta^{1-\iota}\Delta}{6C}\right)\nonumber \\
 & = & 4\p\left(|W_{1}|\geq\frac{\theta^{1-\iota}\Delta}{12C\sqrt{2\tau}}\right)\nonumber \\
 & \leq & \frac{96C\sqrt{\tau}}{\theta^{1-\iota}\Delta\sqrt{\pi}}\exp\left(-\frac{1}{2}\left(\frac{\theta^{1-\iota}\Delta}{12C\sqrt{2\tau}}\right)^{2}\right)\,.\label{eq:maj-distance-compacts}
\end{eqnarray}

For all $x$, $x'$, we have
\begin{multline}
\int_{(t_{1,}t_{2})\in\R^{2}\,:\,\left|t_{2}-2B_{2}(p_{2,1}x+x')\right|>\frac{\Delta}{1+p_{2,1}}-(2+\frac{16}{h})\Delta\theta^{1-\iota}B_{2}p_{2,1}}\\
e^{-\frac{1}{4A_{2}}\left(\left|\frac{t_{1}}{p_{1,1}}-\frac{p_{2,1}t_{2}}{p_{1,1}}-2A_{2}p_{1,1}x\right|-4M\left|\frac{1+p_{2,1}}{p_{1,1}}\right|\right)_{+}^{2}}\\
\times e^{-\frac{1}{4B_{2}}\left(\left|t_{2}-2B_{2}(p_{2,1}x+x')\right|-4M\right)_{+}^{2}}dt_{1}dt_{2}\\
\mbox{(change of variables : }\left(\begin{array}{c}
t_{1}'\\
t_{2}'
\end{array}\right)=(P^{-1})^{T}\left(\begin{array}{c}
t_{1}\\
t_{2}
\end{array}\right))\\
=\int_{(t_{1}',t_{2}')\in\R^{2}\,:\,|t_{2}'-2B_{2}(p_{2,1}x+x')|>\frac{\Delta}{1+p_{2,1}}-(2+\frac{16}{h})\Delta\theta^{1-\iota}B_{2}p_{2,1}}e^{-\frac{1}{4A_{2}}\left(\left|t'_{1}-2A_{2}p_{1,1}x\right|-4M\left|\frac{1+p_{2,1}}{p_{1,1}}\right|\right)_{+}^{2}}\\
\times e^{-\frac{1}{4B_{2}}\left(\left|t'_{2}-2B_{2}(p_{2,1}x+x')\right|-4M\right)_{+}^{2}}p_{1,1}dt'_{1}dt'_{2}\\
\mbox{(by (\ref{eq:queue-gaussienne}))}\leq\left(8M\frac{|1+p_{2,1}|}{p_{1,1}}+2\sqrt{\pi A_{2}}\right)\\
\times\frac{4B_{2}}{d(\Delta)}\exp\left(-\frac{1}{4B_{2}}d(\Delta)^{2}\right)p_{1,1}\,.\label{eq:maj-compact-02}
\end{multline}

We have, by Lemma \ref{lem:encadrement-transition} and Equation (\ref{eq:borne-theta}),
\begin{multline}
\int_{(x,x')\in\R^{2}}e^{6M|x-x'|+\tau(3M+3M^{2}/2)+4M^{2}(1,-1)\kappa^{-1}(1,-1)^{T}}\widetilde{Q}\pi_{k-1}(dx,dx')\\
\leq\int_{(x,x')\in\R^{2}}\frac{1}{\sqrt{2\pi\tau}}\exp\left(-\frac{(x-x')^{2}}{2\tau}+7M|x-x'|+\tau\left(\frac{7}{2}M+\frac{3}{2}M^{2}\right)+40M^{2}\right)dx'\pi_{k-1}(dx)\\
\leq2\exp\left(\frac{49}{2}M^{2}\tau+\tau\left(\frac{7}{2}M+\frac{3}{2}M^{2}\right)+\frac{40M^{2}}{h}\right)\,.\label{eq:star-02}
\end{multline}

So we have
\begin{multline*}
\E(\pi_{k}(C_{k}(\Delta)^{\complement}))\leq\E(\pi_{k-1}(C_{k-1}(2\Delta)^{\complement}))+\frac{96C\sqrt{\tau}}{\theta^{1-\iota}\Delta\sqrt{\pi}}\exp\left(-\frac{1}{2}\left(\frac{\theta^{1-\iota}\Delta}{12C\sqrt{2\tau}}\right)^{2}\right)\\
+\left(8M\frac{|1+p_{2,1}|}{p_{1,1}}+2\sqrt{\pi A_{2}}\right)C_{1}'(h,\tau)\sigma_{1}\sigma_{2}p_{1,1}\frac{8B_{2}}{d(\Delta)}\\
\times\exp\left(-\frac{1}{4B_{2}}d(\Delta)^{2}+26M^{2}\tau+\frac{7}{2}\tau M+\frac{40M^{2}}{h}\right)\,.
\end{multline*}
Then, by recurrence, 
\begin{multline*}
\E(\pi_{k}(C_{k}(\Delta)^{\complement}))\leq\E(\pi_{0}(C_{0}(2^{k}\Delta)^{\complement}))+\sum_{i=0}^{k-1}\left[\frac{96C\sqrt{\tau}}{2^{i}\theta^{1-\iota}\Delta\sqrt{\pi}}\exp\left(-\frac{1}{2}\left(\frac{2^{i}\theta^{1-\iota}\Delta}{12C\sqrt{2\tau}}\right)^{2}\right)\right.\\
\left.+\left(8M\frac{|1+p_{2,1}|}{p_{1,1}}+2\sqrt{\pi A_{2}}\right)C_{1}'(h,\tau)\sigma_{1}\sigma_{2}p_{1,1}\frac{8B_{2}e^{26M^{2}\tau+\frac{7}{2}\tau M+40M^{2}/h}}{d(2^{i}\Delta)}\exp\left(-\frac{1}{4B_{2}}d(2^{i}\Delta)^{2}\right)\right]\,.
\end{multline*}
We have 
\begin{eqnarray*}
\frac{1}{d(\Delta)}+\frac{1}{d(2\Delta)}+\frac{1}{d(4\Delta)}+\dots & \leq & \frac{1}{d(\Delta)}+\frac{1}{d(\Delta)}+\frac{1}{d(\Delta)+2\Delta\left(\frac{1}{1+p_{2,1}}-(2+\frac{16}{h})B_{2}p_{2,1}\theta^{1-\iota}\right)}\\
 &  & +\frac{1}{d(\Delta)+4\Delta\left(\frac{1}{1+p_{2,1}}-(2+\frac{16}{h})B_{2}p_{2,1}\theta^{1-\iota}\right)}+\dots\\
 & \leq & \frac{2}{d(\Delta)}+\frac{1}{\Delta\left(\frac{1}{1+p_{2,1}}-(2+\frac{16}{h})B_{2}p_{2,1}\theta^{1-\iota}\right)}\leq\frac{3}{d(\Delta)}\,.
\end{eqnarray*}
So 
\begin{multline*}
\E(\pi_{k}(C_{k}(\Delta)^{\complement}))\leq\E(\pi_{0}(C_{0}(2^{k}\Delta)^{\complement}))+\frac{192C\sqrt{\tau}}{\theta^{1-\iota}\Delta\sqrt{\pi}}\exp\left(-\frac{1}{2}\left(\frac{\theta^{1-\iota}\Delta}{12C\sqrt{2\tau}}\right)^{2}\right)\\
+\left(8M\frac{|1+p_{2,1}|}{p_{1,1}}+2\sqrt{\pi A_{2}}\right)C_{1}'(h,\tau)\sigma_{1}\sigma_{2}p_{1,1}8B_{2}e^{26M^{2}\tau+\frac{7}{2}\tau M+40M^{2}/h}\\
\times\frac{3}{d(\Delta)}\exp\left(-\frac{1}{4B_{2}}d(\Delta)^{2}\right)\,.
\end{multline*}
And we can conclude because of Hypothesis \ref{hyp:sur-pi_0}.
\end{proof}
\begin{cor}
\label{cor:erreur-locale-pi-prime}We suppose that $\pi_{0}'\in\mathcal{P}(\R)$
is such that $\pi_{0}$ and $\pi_{0}'$ are comparable. We suppose
that $(\pi'_{t})_{t\geq0}$ is defined by Equation (\ref{eq:kallianpur-striebel-02}).
Under the assumption of the previous Lemma, we have, for all $k\geq0$,
\[
\E(\pi'_{k\tau}(C_{k}(\Delta)^{\complement})\preceq T(\Delta)e^{h(\pi_{0},\pi_{0}')}\,.
\]
\end{cor}

\begin{proof}
By Equations (\ref{eq:hilbert-prop-01}), (\ref{eq:hilbert-prop-02}),
(\ref{eq:F-K-sequence}), we have, for all $k$, 
\begin{equation}
h(\pi'_{k\tau},\pi_{k\tau})\leq h(\pi'_{0},\pi_{0})\,.\label{eq:hilbert-non-croissante}
\end{equation}
So, by Equation (\ref{eq:encadrement-rapport-mesures}), 
\begin{eqnarray*}
\E(\pi'_{k\tau}(C_{k}(\Delta)^{\complement}) & \leq & \E(e^{h(\pi_{0}',\pi_{0})}\pi_{k\tau}(C_{k}(\Delta)^{\complement}))\\
 & \preceq & T(\Delta)e^{h(\pi_{0},\pi_{0}')}\,.
\end{eqnarray*}
\end{proof}
The next result tells us that $\overline{R}_{k}$ and $\overline{R}_{k}^{\Delta}$
($k\geq1$) are close in some sense (recall that $\pi_{k\tau}'=\overline{R}_{k}(\pi'_{(k-1)\tau})$)
\begin{prop}
\label{prop:erreur-locale}We suppose that $\Delta$ satisfies the
assumption of the previous Lemma (Equation (\ref{eq:hyp-Delta})).
We suppose that $(\pi_{n\tau}')_{n\geq0}$ satisfies the assumptions
of the above Corollary. For all $k\geq1$, we have
\[
\E(\Vert\pi'_{k\tau}-\overline{R}_{k}^{\Delta}(\pi'_{(k-1)\tau})\Vert)\preceq T(\Delta)e^{2h(\pi_{0},\pi_{0}')}\,.
\]
\end{prop}

\begin{proof}
We define measures on $\R^{2}$ (remember Equation (\ref{eq:def-extension-Qtilde})):
\begin{multline*}
\mu=\widetilde{Q}\pi'_{(k-1)\tau}\\
=\widetilde{Q}(\1_{C_{k-1}(\Delta)^{\complement}}\pi'_{(k-1)\tau})(dx,dx')+(\1_{C_{k}(\Delta)}(x')+\1_{C_{k}(\Delta)^{\complement}}(x'))\widetilde{Q}(\1_{C_{k-1}(\Delta)}\pi'_{(k-1)\tau})(dx,dx')\,,
\end{multline*}
\begin{multline*}
\mu'(dx,dx')=\1_{C_{k}(\Delta)}(x')\widetilde{Q}(\1_{C_{k-1}(\Delta)}\pi'_{(k-1)\tau})(dx,dx')\\
+\pi'_{(k-1)\tau}(C_{k-1}(\Delta)^{\complement})\xi_{1}(D_{k},\Delta)\1_{C_{k}(\Delta)}(x')dxdx'\,,
\end{multline*}
where (by a slight abuse of notation)
\[
\widetilde{Q}(\1_{C_{k-1}(\Delta)}\pi'_{(k-1)\tau})(dx,dx')=\1_{C_{k-1}(\Delta)}(x)\pi'_{(k-1)\tau}(dx)Q(x,dx')\,,
\]
\[
\widetilde{Q}(\1_{C_{k-1}(\Delta)^{\complement}}\pi'_{(k-1)\tau})(dx,dx')=\1_{C_{k-1}(\Delta)^{\complement}}(x)\pi'_{(k-1)\tau}(dx)Q(x,dx')\,,
\]
By the definition of $R^{\Delta}$ (Equation (\ref{eq:def-R-Delta}))
and computing as in \cite{oudjane-rubenthaler-2005}, p. 433 (or as
in \cite{oudjane-2000}, p.66), we get
\begin{eqnarray*}
\Vert\pi'_{k\tau}-\overline{R}_{k}^{\Delta}(\pi'_{(k-1)\tau})\Vert_{\mbox{}} & = & \Vert\psi_{k}\bullet\mu-\psi_{k}\bullet\mu'\Vert\\
\mbox{(using Equation (\ref{eq:maj-erreur-locale-01}))} & \leq & 2\int_{\R^{2}}\frac{\psi_{k}(x,x')}{\langle\widetilde{Q}\pi'_{k-1},\psi_{k}\rangle}\times[\1_{C_{k}(\Delta)^{\complement}}(x')\widetilde{Q}(\1_{C_{k-1}(\Delta)}\pi'_{(k-1)\tau})(dx,dx')\\
 &  & \,\,+\widetilde{Q}(\1_{C_{k-1}(\Delta)^{c}}\pi'_{(k-1)\tau})(dx,dx')\\
 &  & \,\,+\pi'_{(k-1)\tau}(C_{k-1}(\Delta)^{\complement})\xi_{1}(D_{k},\Delta)\1_{C_{k}(\Delta)}(x')dxdx']\,.
\end{eqnarray*}
We have, by Lemma \ref{lem:encadrement-potentiel},
\begin{multline*}
\E(\1_{[0,\theta^{1-\iota}\Delta]}(|m_{k}-m_{k-1}|)\\
\times\int_{\R^{2}}\frac{\psi_{k}(x,x')}{\langle\widetilde{Q}\pi'_{k-1},\psi_{k}\rangle}\times\1_{C_{k}(\Delta)^{\complement}}(x')\widetilde{Q}(\1_{C_{k-1}(\Delta)}\pi'_{(k-1)\tau})(dx,dx')|Y_{0:(k-1)\tau})\\
=\int_{y_{0:\tau}\in\mathcal{C}([0,\tau])}\1_{[0,\theta^{1-\iota}\Delta]}(|m_{k}(B_{1}^{y_{0:\tau}})-m_{k-1}(B_{1}^{Y_{(k-2)\tau:(k-1)\tau}})|)\\
\times\int_{\R^{2}}\frac{\psi(y_{0:\tau},x,x')}{\langle\widetilde{Q}\pi'_{k-1},\psi(y_{0:\tau},.,.)\rangle}\1_{C_{k}(\Delta,B_{1}^{y_{0:\tau}})^{\complement}}(x')\widetilde{Q}(\1_{C_{k-1}(\Delta)}\pi'_{(k-1)\tau})(dx,dx')\\
\times\left(\int_{\R^{2}}\widetilde{Q}\pi_{(k-1)\tau}(du,du')\psi(y_{0:\tau},u,u')\right)\lambda_{W}(dy_{0:\tau})\\
\mbox{(using Equations (\ref{eq:encadrement-rapport-mesures}), (\ref{eq:hilbert-non-croissante}))}\\
\leq e^{2h(\pi_{0},\pi_{0}')}\int_{y_{0:\tau}\in\mathcal{C}([0,\tau])}\1_{[0,\theta^{1-\iota}\Delta]}(|m_{k}(B_{1}^{y_{0:\tau}})-m_{k-1}(B_{1}^{Y_{(k-2)\tau:(k-1)\tau}})|)\\
\times\int_{\R^{2}}\psi(y_{0:\tau},x,x')\1_{C_{k}(\Delta,B_{1}^{y_{0:\tau}})^{\complement}}(x')\1_{C_{k-1}(\Delta)}(x)\widetilde{Q}\pi{}_{k-1}(dx,dx')\lambda_{W}(dy_{0:\tau})\\
\mbox{(using (\ref{eq:maj-compact-01}), (\ref{eq:star-01}), (\ref{eq:maj-compact-02}), (\ref{eq:star-02}) and the fact that }C_{k-1}(\Delta)\subset C_{k-1}(2\Delta)\mbox{)}\\
\preceq T(\Delta)e^{2h(\pi_{0},\pi_{0}')}\,.
\end{multline*}
We have, in the same way, 
\begin{multline*}
\E\left(\left.\int_{\R^{2}}\frac{\psi_{k}(x,x')}{\langle\widetilde{Q}\pi'_{k-1},\psi_{k}\rangle}\times\widetilde{Q}(\1_{C_{k-1}(\Delta)^{\complement}}\pi'_{(k-1)\tau})(dx,dx')\right|Y_{0:(k-1)\tau}\right)\\
\leq e^{h(\pi_{0},\pi_{0}')}\int_{y_{0:\tau}\in\mathcal{C}([0,\tau])}\int_{\R^{2}}\psi(y_{0:\tau},x,x')\1_{C_{k-1}(\Delta)^{c}}(x)\widetilde{Q}\pi'_{(k-1)\tau}(dx,dx')\lambda_{W}(dy_{0:\tau})\\
=e^{h(\pi_{0},\pi_{0}')}\pi'_{(k-1)\tau}(C_{k-1}(\Delta)^{\complement})
\end{multline*}
and 
\begin{multline*}
\E\left(\left.\int_{\R^{2}}\frac{\psi_{k}(x,x')}{\langle\widetilde{Q}\pi'_{k-1},\psi_{k}\rangle}\times\pi'_{(k-1)\tau}(C_{k-1}(\Delta)^{\complement})\xi_{1}(D_{k},\Delta)\1_{C_{k}(\Delta,B_{1}^{y})}(x')dxdx'\right|Y_{0:(k-1)\tau}\right)\\
\leq e^{h(\pi_{0},\pi_{0}')}\int_{y_{0:\tau}\in\mathcal{C}([0,\tau])}\int_{\R^{2}}\psi(y_{0:\tau},x,x')\pi'_{(k-1)\tau}(C_{k-1}(\Delta)^{\complement})\xi_{1}(m_{k}(B_{1}^{y_{0:\tau}})-m_{k-1},\Delta)\\
\times\1_{C_{k}(\Delta,B_{1}^{y})}(x')dxdx'\lambda_{W}(dy_{0:\tau})\\
\leq e^{h(\pi_{0},\pi_{0}')}\pi'_{(k-1)\tau}(C_{k-1}(\Delta)^{\complement})\,.
\end{multline*}
So, using Equation (\ref{eq:maj-distance-compacts}) and Corollary
\ref{cor:erreur-locale-pi-prime}, we get the result.
\end{proof}

\section{New formula for the truncated filter \label{sec:New-formula-for}}

We have reduced the problem to a discrete-time problem. For all $n$,
$\pi_{n\tau}$ is the marginal of a Feynman-Kac sequence based on
the transition $\widetilde{Q}$ and the potentials $(\psi_{k})_{k\geq1}$
(see Equations (\ref{eq:def-psi_k}), (\ref{eq:def-R_n}), (\ref{eq:F-K-sequence}),
Section \ref{subsec:Notations} for the definition of $\widetilde{Q}$,
Section \ref{subsec:Representation-of-the} for the definition of
a Feynman-Kac sequence). As in \cite{oudjane-rubenthaler-2005}, we
restrict the state space to the compacts $(C_{k}(\Delta))_{k\geq0}$.
If $\widetilde{Q}$ restricted to compacts was mixing, then, due to
Proposition \ref{prop:representation-FK-sequence} (and to the remark
below Equation (\ref{eq:def-Rfrak_k})), $\pi_{n\tau}^{\Delta}$ could
be viewed as the law at time $n$ of a Markov chain with contracting
Markov kernels ; and so Lemma \ref{lem:synthese-des-resultats} would
be relatively easy to prove. By construction, $\widetilde{Q}$ restricted
to some compacts cannot be mixing. This is an effect of the fact that
the observations are continuous in time.

The purpose of this Section is to find another representation of the
sequence $(\pi_{n\tau})_{n\geq0}$ as a Feynman-Kac sequence, in such
a way that the underlying Markov operators would be mixing, when restricted
to compacts. Looking at Equation (\ref{eq:def-F-K}), we see that
a Feynman-Kac sequence is a result of the deformation of a measure
on trajectories (we weight the trajectories with potentials $(\psi_{k})_{k\geq1}$).
The main idea of the following is to incorporate the deformations
delicately (in two steps), in order to retain something of the mixing
property of the operator $Q$ (which is mixing when restricted to
compacts).

In this Section, we work with a fixed observation $(Y_{s})_{s\geq0}=(y_{s})_{s\geq0}$.

\subsection{Filter based on partial information}

We define, for all $x=(x_{1},x_{2})$, $x'=(x'_{1},x'_{2})$ in $\R^{2},$
$k$ in $\N^{*}$, $n$ in $\N^{*}$, $n\geq k$, (recall that $\psi_{k}^{\Delta}$
is defined in Equation (\ref{eq:def-psi-Delta}) and that $\xi_{1}$,
$\xi_{2}$ are defnied in (\ref{eq:def-xi_1}), (\ref{eq:def-xi_2}))
\begin{equation}
\mbox{for }k\geq2\,,\,\widetilde{R}_{k}^{\Delta}(x,dx')=\begin{cases}
\1_{C_{k-1}(\Delta)}(x'_{1})\psi_{k}^{\Delta}(x')\widetilde{Q}^{2}(x,dx') & \mbox{ if }x{}_{2}\in C_{k-2}(\Delta)\,,\\
\1_{C_{k-1}(\Delta)}(x'_{1})\psi_{k}^{\Delta}(x')\xi_{1}(D_{k-1},\Delta)dx' & \mbox{ otherwise }\,,
\end{cases}\label{eq:def-R-tilde-Delta}
\end{equation}
\begin{equation}
\psi_{2n|2k}^{\Delta}(x)=\begin{cases}
\widetilde{R}_{2n}^{\Delta}\widetilde{R}_{2n-2}^{\Delta}\dots\widetilde{R}_{2k+2}^{\Delta}(x,\R^{2}) & \mbox{ if }k\leq n-1\,,\\
1 & \mbox{if }k=n\,,
\end{cases}\label{eq:def-psi-Delta-1}
\end{equation}
(so $\psi_{2n|2k}^{\Delta}(x)$ does not depend on $x_{1}$),
\[
S_{2n|2k}^{\Delta}(x,dx')=\begin{cases}
\frac{\psi_{2n|2k+2}^{\Delta}(x')}{\psi_{2n|2k}^{\Delta}(x)}\widetilde{R}_{2k+2}^{\Delta}(x,dx') & \mbox{ if }k\leq n-1\,,\\
dx' & \mbox{ if }k=n-1\,.
\end{cases}
\]
These notations come from \cite{del-moral-guionnet-2001}. As $Q$
has a density with respect to the Lebesgue measure on $\R$, so has
$S_{2n|2k}^{\Delta}$ (with respect to the Lebesgue measure on $\R^{2}$).
We write $(x,x')\in E^{2}\mapsto S_{2n|2k}^{\Delta}(x,x')$ for this
density. We fix $n$ in $\N^{*}$ in the rest of this subsection and
in the following subsection. We define $S_{2n|2k}^{\Delta,(p)}$,
$\psi_{2n|2k}^{\Delta,(p)}$, for $0\leq k\leq n$, with the same
formulas used above to define $S_{2n|2k}^{\Delta}$, $\psi_{2n|2k}^{\Delta}$,
except we replace $\psi_{2n}^{\Delta}$ by $1$. %
For all $D>0$, we set 
\begin{equation}
\epsilon(D,\Delta)=\frac{\xi_{1}(D,\Delta)}{\xi_{2}(D,\Delta)}\,,\label{eq:def-epsilon}
\end{equation}
and, for all $k$, 
\[
\epsilon_{k}=\epsilon(D_{k},\Delta)\,.
\]
\begin{lem}
For $k\leq n-1$, $S_{2n|2k}^{\Delta}$ is a Markov operator and $S_{2n|2k}^{\Delta}$
is $(1-\epsilon_{2k+1}^{2})$-contracting for the total variation
norm, $S_{2n|2k}^{\Delta,(p)}$ is a Markov operator and $S_{2n|2k}^{\Delta,(p)}$
is $(1-\epsilon_{2k+1}^{2})$-contracting for the total variation
norm
\end{lem}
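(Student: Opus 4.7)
The plan is as follows. First I verify the Markov property of $S_{2n|2k}^{\Delta}$ by integrating over $x'$ directly from the definition:
\[
\int_{\R^{2}} S_{2n|2k}^{\Delta}(x, dx') \;=\; \frac{1}{\psi_{2n|2k}^{\Delta}(x)} \int_{\R^{2}} \psi_{2n|2k+2}^{\Delta}(x')\, \widetilde{R}_{2k+2}^{\Delta}(x, dx') \;=\; 1,
\]
since the right-hand integral equals $\psi_{2n|2k}^{\Delta}(x)$ by the recursion \eqref{eq:def-psi-Delta-1} defining $\psi_{2n|2k}^{\Delta}$ as $\widetilde{R}_{2k+2}^{\Delta}\,\psi_{2n|2k+2}^{\Delta}$. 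The same calculation shows that $S_{2n|2k}^{\Delta,(p)}$ is Markov, its construction differing only in the terminal choice $\psi_{2n|2n}^{\Delta,(p)} = 1$.

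For the contraction, I would show that the unnormalized kernel $\widetilde{R}_{2k+2}^{\Delta}$ is $\epsilon_{2k+1}$-mixing in the sense of Subsection~\ref{subsec:Notations}; the $(1-\epsilon_{2k+1}^{2})$-contractivity of the associated normalized Markov kernel $S_{2n|2k}^{\Delta}$ then follows by the general principle recalled there. I propose the reference measure
\[
\rho(dx') \;=\; \1_{C_{2k+1}(\Delta)}(x'_{1})\, \psi_{2k+2}^{\Delta}(x'_{1},x'_{2})\, Q(x'_{1}, dx'_{2})\, dx'_{1},
\]
which simultaneously incorporates the compact constraints enforced by $\psi_{2k+2}^{\Delta}$ and the second transition arising from $\widetilde{Q}^{2}$. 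I then split according to \eqref{eq:def-R-tilde-Delta}: when $x_{2}\in C_{2k}(\Delta)$, the identity $\widetilde{Q}^{2}((x_{1},x_{2}), d(x'_{1},x'_{2})) = Q(x_{2},dx'_{1})\,Q(x'_{1},dx'_{2})$ shows that the $\rho$-density of $\widetilde{R}_{2k+2}^{\Delta}(x,\cdot)$ equals $Q(x_{2},x'_{1})$, and since $|x_{2}-x'_{1}|$ is controlled by $D_{2k+1}\pm \Delta/(B_{2}(1+p_{2,1}))$ on the support, Lemma~\ref{lem:encadrement-transition} together with definitions \eqref{eq:def-xi_1}--\eqref{eq:def-xi_2} pins this density between $\xi_{1}(D_{2k+1},\Delta)$ and $\xi_{2}(D_{2k+1},\Delta)$; when $x_{2}\notin C_{2k}(\Delta)$, the kernel is by construction $\xi_{1}(D_{2k+1},\Delta)$ times a Lebesgue-type density, which after comparing $dx'_{2}$ with $Q(x'_{1},dx'_{2})$ on the support $x'_{2}\in C_{2k+2}(\Delta)$ (again via Lemma~\ref{lem:encadrement-transition}) lies within comparable multiples of $\rho$. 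Combining the two cases places $\widetilde{R}_{2k+2}^{\Delta}(x,\cdot)$ uniformly between a multiple of $\xi_{1}(D_{2k+1},\Delta)\,\rho$ and a multiple of $\xi_{2}(D_{2k+1},\Delta)\,\rho$, establishing the $\epsilon_{2k+1}$-mixing.

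The main obstacle is reconciling the structurally different forms of $\widetilde{R}_{2k+2}^{\Delta}$ in the two regimes: the ``inside'' case naturally carries the second transition $Q(x'_{1},dx'_{2})$ from $\widetilde{Q}^{2}$, whereas the ``outside'' case replaces it by Lebesgue measure with the uniform factor $\xi_{1}(D_{2k+1},\Delta)$. Absorbing $Q(x'_{1},dx'_{2})$ into $\rho$ and invoking the two-sided Gaussian bounds on the common support $x'_{2}\in C_{2k+2}(\Delta)$ enforced by $\psi_{2k+2}^{\Delta}$ is what unifies the two regimes under one mixing bound. The argument carries over verbatim to $S_{2n|2k}^{\Delta,(p)}$: the modification at the terminal index affects neither the kernels $\widetilde{R}_{2k+2}^{\Delta}$ with $k\leq n-1$ nor the telescoping identity used to verify the Markov property.
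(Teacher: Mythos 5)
Your overall strategy is the same as the paper's: verify the Markov property through the telescoping identity $\psi_{2n|2k}^{\Delta}(x)=\int\psi_{2n|2k+2}^{\Delta}(x')\widetilde{R}_{2k+2}^{\Delta}(x,dx')$, show that the unnormalized kernel $\widetilde{R}_{2k+2}^{\Delta}$ is mixing with respect to a fixed reference measure with constants $\xi_{1}(D_{2k+1},\Delta)$ and $\xi_{2}(D_{2k+1},\Delta)$, and deduce the contraction of the normalized kernel. The Markov part of your argument is fine. The difference lies in the reference measure: the paper takes $\lambda_{2k+2}(dx')=\1_{C_{2k+1}(\Delta)}(x'_{1})\1_{C_{2k+2}(\Delta)}(x'_{2})\psi_{2k+2}(x'_{1},x'_{2})dx'_{1}dx'_{2}$ (Lebesgue in the second coordinate), whereas you absorb $Q(x'_{1},dx'_{2})$ into $\rho$.

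The gap is in your treatment of the second regime of (\ref{eq:def-R-tilde-Delta}). There the density of $\widetilde{R}_{2k+2}^{\Delta}(x,\cdot)$ with respect to your $\rho$ is $\xi_{1}(D_{2k+1},\Delta)/Q(x'_{1},x'_{2})$, and the only control Lemma \ref{lem:encadrement-transition} gives on $Q(x'_{1},x'_{2})$ over the support $C_{2k+1}(\Delta)\times C_{2k+2}(\Delta)$ is $\xi_{1}(D_{2k+2},\Delta)\leq Q(x'_{1},x'_{2})\leq\xi_{2}(D_{2k+2},\Delta)$. These are not harmless ``comparable multiples'': they depend on $D_{2k+2}$ and $\Delta$ and are in general very far from $1$. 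Merging the two regimes then yields envelopes $\min\left(\xi_{1}(D_{2k+1},\Delta),\,\xi_{1}(D_{2k+1},\Delta)/\xi_{2}(D_{2k+2},\Delta)\right)$ below and $\max\left(\xi_{2}(D_{2k+1},\Delta),\,\xi_{1}(D_{2k+1},\Delta)/\xi_{1}(D_{2k+2},\Delta)\right)$ above, whose ratio is not $\epsilon_{2k+1}$; the contraction coefficient you actually obtain involves $D_{2k+2}$ as well, so the statement as given is not reached. In fairness, the paper's own bound (\ref{eq:R-tilde-mixing}) faces the mirror-image difficulty: with the Lebesgue reference measure the second regime is exact, but the first regime carries the uncontrolled extra factor $Q(x'_{1},x'_{2})$, and the parenthetical ``we use here the second line'' suggests only that regime was checked. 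You have therefore correctly located the crux of the lemma, but your resolution is not quantitative enough to deliver the stated constant: one either must accept a mixing constant of the form $\epsilon(D_{2k+1},\Delta)\epsilon(D_{2k+2},\Delta)$ and propagate it through the later sections, or restrict to starting points with $x_{2}\in C_{2k}(\Delta)$ (where your computation does give exactly $\epsilon_{2k+1}$), which is not legitimate for $k=0$ since $Z_{0}^{(2)}$ need not be supported in $C_{0}(\Delta)$.
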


\begin{proof}
We write the proof only for the kernels $S_{\dots}^{\Delta}$, it
would be very similar for the kernels $S_{\dots}^{\Delta,(p)}$. %
{} By Proposition \ref{prop:representation-FK-sequence}, $S_{2n|2k}^{\Delta}$
is a Markov operator. We set, for all $k\geq1$, $x_{1}$, $x_{2}$
in $\R$, 
\[
\lambda_{k}(dx_{1},dx_{2})=\1_{C_{k-1}(\Delta)}(x_{1})\1_{C_{k}(\Delta)}(x_{2})\psi_{k}(x_{1},x_{2})Q(x_{1},x_{2})dx_{1}dx_{2}\,.
\]
By Lemma \ref{lem:encadrement-transition}, we have, for all $x_{1}$,
$x_{2}$, $z_{1}$, $z_{2}$ in $\R$, $k\geq2$ (we use here the
second line of Equation (\ref{eq:def-R-tilde-Delta})) 
\begin{equation}
\xi_{1}(D_{k-1},\Delta)\lambda_{k}(dz_{1},dz_{2})\leq\widetilde{R}_{k}^{\Delta}(x_{1},x_{2},dz_{1},dz_{2})\leq\xi_{2}(D_{k-1},\Delta)\lambda_{k}(dz_{1},dz_{2})\,.\label{eq:R-tilde-mixing}
\end{equation}
So $\widetilde{R}_{k}^{\Delta}$ is $\sqrt{\epsilon_{k-1}}$-mixing.
So, for all $x$ in $\R^{2}$, all $k$ such that $0\leq k\leq n-1$
(the convention being that, if $k=n-1$, $(\tdR_{2n}\dots\tdR_{2k+4})(y,dz)=\delta_{y}(dz)$)
\begin{eqnarray*}
\psi_{2n|2k}^{\Delta}(x) & = & \int_{\R^{2}\times\R^{2}}(\widetilde{R}_{2n}^{\Delta}\dots\widetilde{R}_{2k+4}^{\Delta})(y,dz)\widetilde{R}_{2k+2}^{\Delta}(x,dy)\\
 & \leq & \int_{\R^{2}\times\R^{2}}(\widetilde{R}_{2n}^{\Delta}\dots\widetilde{R}_{2k+4}^{\Delta})(y,dz)\xi_{2}(D_{2k+1},\Delta)\lambda_{2k+2}(dy)\,,
\end{eqnarray*}
and, for $x'$ in $\R^{2}$,
\[
\widetilde{R}_{2k+2}^{\Delta}(x,dx')\geq\xi_{1}(D_{2k+1},\Delta)\lambda_{2k+2}(dx')\,,
\]
so 
\begin{equation}
S_{2n|2k}^{\Delta}(x,dx')\geq\frac{\xi_{1}(D_{2k+1},\Delta)}{\xi_{2}(D_{2k+1},\Delta)}\times\frac{\widetilde{R}_{2k+4:2n}^{\Delta}(x',\R^{2})\lambda_{2k+2}(dx')}{\int_{\R^{2}}\widetilde{R}_{2k+4:2n}^{\Delta}(y,\R)\lambda_{2k+2}(dy)}\,.\label{eq:min-S-Delta}
\end{equation}
 In the same way as above, we can also obtain
\begin{equation}
S_{2n|2k}^{\Delta}(x,dx')\leq\frac{\xi_{2}(D_{2k+1},\Delta)}{\xi_{1}(D_{2k+1},\Delta)}\times\frac{\widetilde{R}_{2k+4:2n}(x',\R^{2})\lambda_{2k+2}(dx')}{\int_{\R^{2}}\widetilde{R}_{2k+4:2n}(y,\R^{2})\lambda_{2k+2}(dy)}\,.\label{eq:maj-S-Delta}
\end{equation}
This implies that $S_{2n|2k}^{\Delta}$ is $(1-\epsilon_{2k+1}^{2})$-contracting
for the total variation norm (see Subsection \ref{subsec:Notations}).
One can also use Proposition \ref{prop:representation-FK-sequence}
to prove this result. We did it this way because we will re-use Equations
(\ref{eq:min-S-Delta}), (\ref{eq:maj-S-Delta}).
\end{proof}
We set $Z_{0}$ to be of the form $Z_{0}=(0,Z_{0}^{(2)})$, with $Z_{0}^{(2)}$
a random variable. We set $(Z_{2k})_{0\leq k\leq n}$ to be a non-homogeneous
Markov chain with kernels $S_{2n|0}^{\Delta}$, $S_{2n|2}^{\Delta}$,
\ldots{}, $S_{2n|2n-2}^{\Delta}$ (for $k$ in $\{1,2,\dots,n\}$,
the law of $Z_{2k}$ knowing $Z_{2k-2}$ is $S_{2n|2k-2}^{\Delta}(Z_{2k-2},.)$).
For $Z_{2k}$ being a element of this chain, we denote by $Z_{2k}^{(1)}$
and $Z_{2k}^{(2)}$ its first and second component respectively. Recalling
Proposition \ref{prop:representation-FK-sequence} (or Proposition
3.1, p. 428 in \cite{oudjane-rubenthaler-2005}, or similar results
in \cite{del-moral-guionnet-2001}), if the law of $Z_{0}$ is chosen
properly, then $Z_{2n}^{(2)}$ has the same law as $X_{2n}$ knowing
$Y_{\tau:2\tau}$, \ldots{}, $Y_{(2n-1)\tau:2n\tau}$, hence the
title of this Subsection. %
\begin{rem}
\label{rem:valeurs-dans-compact}We have that, for all $k\geq1$,
$Z_{2k}^{(2)}$ takes values in $C_{2k}(\Delta)$ and $Z_{2k}^{(1)}$
takes values in $C_{2k-1}(\Delta).$
\end{rem}

We set $(Z_{2k}^{(p)})_{0\leq k\leq n}$ to be a non-homogeneous Markov
chain with $Z_{0}^{(p)}=Z_{0}$ and with kernel $S_{2n|0}^{\Delta,(p)}$,
$S_{2n|2}^{\Delta,(p)}$, \ldots{}, $S_{2n|2n-2}^{\Delta,(p)}$.

We set $U_{2k+1}=(Z_{2k}^{(2)},Z_{2k+2}^{(1)})$ for $k\in\{0,1,\dots,n-1\}$
and $U_{2n+1}^{(1)}=Z_{2n}^{(2)}$. We set $U_{2k+1}^{(p)}=(Z_{2k}^{(p)(2)},Z_{2k+2}^{(p)(1)})$
for $k\in\{0,1,2,\dots,n-1\}$.

\subsection{New Markov chain}
\begin{lem}
\label{lem:U-markov}The sequence $(U_{1},U_{3},\dots,U_{2n-1},U_{2n+1}^{(1)})$
is a non-homogeneous Markov chain. The sequence $(U_{1}^{(p)},U_{3}^{(p)},\dots,U_{2n-3}^{(p)},U_{2n-1}^{(p)})$
is a non-homogeneous Markov chain. 
\end{lem}

If $Z_{0}^{(2)}$ is of law $\mu$, then the law of $U_{1}$ is given
by, for all $(z,z')$ in $\R^{2}$, 
\begin{equation}
\p(U_{1}\in(dz,dz'))=\int_{x\in\R}S_{2n|0}^{\Delta}((0,z),(dz',dx))\mu(dz)\,.\label{eq:loi-de-U1}
\end{equation}

We write $S_{2n|2k+1}^{U}$ for the transition kernel between $U_{2k-1}$
and $U_{2k+1}$ (for $k=1,2,\dots,n-1$) and $S_{2n|2n+1}^{U}$ for
the transition between $U_{2n-1}$ and $U_{2n+1}^{(1)}$. We write
$S_{2n|2k+1}^{(p)U}$ for the transition kernel between $U_{2n|2k-1}^{(p)}$
and $U_{2n|2k+1}^{(p)}$ (for $k=1,2,\dots,n-1$)
\begin{proof}
We write the proof only for $(U_{1},U_{3},\dots,U_{2n-1},U_{2n+1}^{(1)})$,
it would be very similar for the sequence $(U_{1}^{(p)},U_{3}^{(p)},\dots,U_{2n-1}^{(p)})$.
Let $\varphi$ be a test function (in $\mathcal{C}_{b}^{+}(\R^{2})$).
For $k\in\{1,\dots,n-1\}$, we have (for $z_{0}^{(2)}$, $z_{2}^{(1)}$,
\ldots{} ,$z_{2k}^{(1)}$ in $\R$)
\begin{multline*}
\E(\varphi(U_{2k+1})|U_{1}=(z_{0}^{(2)},z_{2}^{(1)}),\dots,U_{2k-1}=(z_{2k-2}^{(2)},z_{2k}^{(1)}))=\\
\E(\varphi(Z_{2k}^{(2)},Z_{2k+2}^{(1)})|Z_{0}^{(2)}=z_{0}^{(2)},\dots,Z_{2k-2}^{(2)}=z_{2k-2}^{(2)},Z_{2k}^{(1)}=z_{2k}^{(1)})=\\
\E(\varphi(Z_{2k}^{(2)},Z_{2k+2}^{(1)})|Z_{2k-2}^{(2)}=z_{2k-2}^{(2)},Z_{2k}^{(1)}=z_{2k}^{(1)})\,,
\end{multline*}
as $S_{2n|2k-2}^{\Delta}(z_{2k-2}^{(1)},z_{2k-2}^{(2)},.,.)$ does
not depend on $z_{2k-2}^{(1)}$. So the quantity above is equal to,
for any $z\in C_{2k-1}(\Delta)$,
\begin{multline*}
\int_{\R^{2}}\varphi(z_{2k}^{(2)},z_{2k+2}^{(1)})\frac{S_{2n|2k-2}^{\Delta}((z,z_{2k-2}^{(2)}),(z_{2k}^{(1)},z_{2k}^{(2)}))}{\int_{\R}S_{2n|2k-2}^{\Delta}((z,z_{2k-2}^{(2)}),(z_{2k}^{(1)},z'))dz'}\\
\times\left(\int_{\R}S_{2n|2k}^{\Delta}((z_{2k}^{(1)},z_{2k}^{(2)}),(z_{2k+2}^{(1)},z_{2k+2}^{(2)}))dz_{2k+2}^{(2)}\right)dz_{2k}^{(2)}dz_{2k+2}^{(1)}\,.
\end{multline*}
A similar computation can be made for $\E(\varphi(U_{2n+1}^{(1)})|U_{1},\dots U_{2n-1})$. 
\end{proof}
We set, for all $k$, 
\begin{multline}
\epsilon'(D,\Delta)=\exp\left[-\left(\frac{B_{2}p_{2,1}^{2}}{2}+\frac{1}{4\tau}\right)\left(\frac{\Delta}{B_{2}(1+p_{2,1})}+D\right)^{2}\right.\\
\left.-\left(\Delta p_{2,1}+3M\right)\left(\frac{\Delta}{B_{2}(1+p_{2,1})}+D\right)-3\tau\left(\frac{M}{2}+\frac{M^{2}}{4}\right)\right]\,,\label{eq:def-epsilon-prime}
\end{multline}
\[
\epsilon'_{k}=\epsilon'(|m_{k}-m_{k-1}|,\Delta)\,.
\]
\begin{prop}
\label{lem:U-mixing}For any $k=1,2,\dots,n$, the Markov kernel $S_{2n|2k+1}^{U}$
is $(\epsilon_{2k-1}^{2}(\epsilon'_{2k})^{2})$-contracting. For any
$k=1,2,\dots,n-1$, the Markov kernel $S_{2n|2k+1}^{(p)U}$ is is
$(\epsilon_{2k-1}^{2}(\epsilon'_{2k})^{2})$-contracting.
\end{prop}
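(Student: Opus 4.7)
The plan is to establish a Doeblin-type minorization for $S^{U}_{2n|2k+1}$: for every $u$ in the relevant domain and every measurable set $C$,
\[
S^{U}_{2n|2k+1}(u,C)\geq \epsilon_{2k-1}^{2}(\epsilon'_{2k})^{2}\,\nu(C)
\]
for some probability measure $\nu$ independent of $u$. By the coupling/Dobrushin argument recalled in Subsection~\ref{subsec:Notations} (specifically the $\epsilon$-mixing $\Rightarrow$ $(1-\epsilon^{2})$-contraction passage), such a minorization yields the claimed contraction in total variation norm.

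Starting from the explicit formula for $S^{U}_{2n|2k+1}$ derived in the proof of Lemma~\ref{lem:U-markov}, the kernel factors as
\[
S^{U}_{2n|2k+1}(u,dz_{2k}^{(2)}dz_{2k+2}^{(1)})=A(u,z_{2k}^{(2)})\,dz_{2k}^{(2)}\cdot B\bigl((z_{2k}^{(1)},z_{2k}^{(2)}),dz_{2k+2}^{(1)}\bigr),
\]
where $A$ is the conditional density of $Z_{2k}^{(2)}$ extracted from $S^{\Delta}_{2n|2k-2}$, and $B$ is the marginal on the first coordinate of $S^{\Delta}_{2n|2k}((z_{2k}^{(1)},z_{2k}^{(2)}),\cdot)$. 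The first key observation is that $B$ does not depend on $z_{2k}^{(1)}$: indeed, $\widetilde{R}^{\Delta}_{2k+2}$ sees its input only through the second coordinate (it applies $\widetilde{Q}^{2}$, which deterministically drops the first coordinate), so $S^{\Delta}_{2n|2k}(\cdot\,,z_{2k}^{(2)})$ and all the weights $\psi^{\Delta}_{2n|2j}$ for $j\geq k$ depend on their arguments only through the second coordinate.

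For $A$, I apply the bounds (\ref{eq:min-S-Delta})--(\ref{eq:maj-S-Delta}) to $S^{\Delta}_{2n|2k-2}$ (which is $\sqrt{\epsilon_{2k-1}}$-mixing, as $D_{2k-1}$ enters through $\widetilde{R}^{\Delta}_{2k}$). Taking the ratio defining the conditional density of $Z_{2k}^{(2)}$ given $Z_{2k}^{(1)}=z_{2k}^{(1)}$ yields
\[
A(u,z_{2k}^{(2)})\geq \epsilon_{2k-1}^{2}\cdot\frac{\Psi(z_{2k}^{(2)})\1_{C_{2k}(\Delta)}(z_{2k}^{(2)})\,\psi_{2k}(z_{2k}^{(1)},z_{2k}^{(2)})}{\int\Psi(z')\1_{C_{2k}(\Delta)}(z')\,\psi_{2k}(z_{2k}^{(1)},z')\,dz'},
\]
where $\Psi(z)=\widetilde{R}^{\Delta}_{2k+2:2n}((\cdot,z),\R^{2})$ depends only on $z$ by the observation above. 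The residual dependence on $z_{2k}^{(1)}$ lives only in the $\psi_{2k}$-factors. I then use the explicit form (\ref{eq:pol-psi-chapeau}) of $\widehat\psi$, Lemma~\ref{lem:encadrement-potentiel} relating $\psi$ and $\widehat\psi$, the definitions of $A_{2},B_{2},C_{1},p_{2,1}$ and the width $\Delta/(B_{2}(1+p_{2,1}))$ of $C_{2k-1}(\Delta),C_{2k}(\Delta)$ under Hypothesis~\ref{hyp:parametres} to prove the pointwise ratio bound
\[
\frac{\psi_{2k}(y_{1},z)}{\psi_{2k}(y_{1}',z)}\geq \epsilon'_{2k}\qquad\text{for all }y_{1},y_{1}'\in C_{2k-1}(\Delta),\ z\in C_{2k}(\Delta),
\]
where the Gaussian-like exponent matches (\ref{eq:def-epsilon-prime}). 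Applying this ratio bound once in the numerator and once in the denominator of the lower bound for $A$ costs a factor $(\epsilon'_{2k})^{2}$ and replaces $\psi_{2k}(z_{2k}^{(1)},\cdot)$ throughout by $\psi_{2k}(y_{0},\cdot)$ for a fixed reference point $y_{0}\in C_{2k-1}(\Delta)$. This yields $A(u,z_{2k}^{(2)})\geq \epsilon_{2k-1}^{2}(\epsilon'_{2k})^{2}f(z_{2k}^{(2)})$ where
\[
f(z)=\frac{\Psi(z)\1_{C_{2k}(\Delta)}(z)\,\psi_{2k}(y_{0},z)}{\int\Psi(z')\1_{C_{2k}(\Delta)}(z')\,\psi_{2k}(y_{0},z')\,dz'}
\]
is a probability density independent of $u$. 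Since $B$ depends only on $z_{2k}^{(2)}$ and $z_{2k+2}^{(1)}$, the product $f(z_{2k}^{(2)})\,dz_{2k}^{(2)}\cdot B(z_{2k}^{(2)},dz_{2k+2}^{(1)})$ is a probability measure on $\R^{2}$ independent of $u$, which gives the desired minorization with constant $\epsilon_{2k-1}^{2}(\epsilon'_{2k})^{2}$ and hence the contraction bound. The argument for $S^{(p)U}_{2n|2k+1}$ is identical, since the only difference amounts to replacing the terminal weight $\psi^{\Delta}_{2n}$ by the constant~$1$, which has no effect on any of the bounds.

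The main obstacle will be producing the $\epsilon'_{2k}$ ratio bound on $\psi_{2k}$: it requires tracking all the quadratic and linear terms in the exponent of $\widehat\psi$ in (\ref{eq:pol-psi-chapeau}) over $C_{2k-1}(\Delta)\times C_{2k}(\Delta)$, together with the $\psi\leftrightarrow\widehat\psi$ conversion factors $e^{\pm(2M|z-x|+\tau(M+M^{2}/2))}$ from Lemma~\ref{lem:encadrement-potentiel}, and matching the resulting expression precisely to the form (\ref{eq:def-epsilon-prime}) under Hypothesis~\ref{hyp:parametres} (which is exactly where the inequality $|p_{2,1}|\leq 1/2$ and the definition (\ref{eq:def-compact}) of the compacts in terms of $B_{2}(1+p_{2,1})$ are used).
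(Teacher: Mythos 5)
Your overall architecture is exactly the paper's: factor $S^{U}_{2n|2k+1}$ through the conditional density of $Z_{2k}^{(2)}$ and the $z_{2k}^{(1)}$-independent kernel $S^{\Delta}_{2n|2k}$, extract the factor $\epsilon_{2k-1}^{2}$ from the minorization/majorization (\ref{eq:min-S-Delta})--(\ref{eq:maj-S-Delta}) of $S^{\Delta}_{2n|2k-2}$, and pay a further $(\epsilon'_{2k})^{2}$ to remove the residual dependence on $z_{2k}^{(1)}$ sitting inside $\psi_{2k}$. The one step that fails as stated is precisely the one you flag as the main obstacle: the pointwise ratio bound $\psi_{2k}(y_{1},z)/\psi_{2k}(y_{1}',z)\geq\epsilon'_{2k}$ for $y_{1},y_{1}'\in C_{2k-1}(\Delta)$, $z\in C_{2k}(\Delta)$ is false. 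Indeed, by (\ref{eq:pol-psi-chapeau}) and Lemma~\ref{lem:encadrement-potentiel},
\[
\frac{\widehat{\psi}(y_{(2k-1)\tau:2k\tau},y_{1},z)}{\widehat{\psi}(y_{(2k-1)\tau:2k\tau},y_{1}',z)}=\exp\left(-A_{2}(y_{1}+y_{1}')(y_{1}-y_{1}')+(A_{1}+C_{1}z)(y_{1}-y_{1}')\right)\,,
\]
and while $|y_{1}-y_{1}'|\leq\Delta/(B_{2}(1+p_{2,1}))$ is controlled, the factor $|-2A_{2}m_{2k-1}+A_{1}^{Y_{(2k-1)\tau:2k\tau}}+C_{1}m_{2k}|$ is a random quantity with no deterministic bound: the compact $C_{2k-1}(\Delta)$ is centred at $m_{2k-1}$, which is built from $B_{1}$ of the \emph{previous} observation window, whereas the Gaussian centre of $\widehat{\psi}$ in its first argument is driven by $A_{1}$ of the \emph{current} window, and nothing forces these to be within $O(1)$ of each other almost surely. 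So $\epsilon'_{2k}$, which depends on the observation only through $D_{2k}=|m_{2k}-m_{2k-1}|$, cannot minorize this ratio, and an attempt to "track all the quadratic and linear terms" would get stuck exactly here.

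The repair is what the paper does and what your own cancellation remark almost says: after completing the square with the matrix $P$, write $\psi_{2k}(x,z)$ as $\psi_{2k}^{(1)}(x)\,\psi_{2k}^{(2)}(z)$ times a coupling factor controlled only through $|x-z|$ and $\Delta$ (Lemma~\ref{lem:encadrement-potential-separation} together with the $e^{\pm 2M|x-z|\pm\tau(M+M^{2}/2)}$ conversion of Lemma~\ref{lem:encadrement-potentiel}); the uncontrollable part is entirely in $\psi_{2k}^{(1)}(z_{2k}^{(1)})$, which appears identically in the numerator and in the denominator of the normalized density of $z_{2k}^{(2)}$ and therefore cancels exactly, while only the coupling factor --- bounded above and below in terms of $D_{2k}$ and $\Delta$ --- costs the $(\epsilon'_{2k})^{2}$. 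In other words, the correct intermediate statement is a two-sided bound on $\psi_{2k}(x,z)/(\psi_{2k}^{(1)}(x)\psi_{2k}^{(2)}(z))$, not on the ratio of $\psi_{2k}$ at two first arguments. With that substitution your proof goes through and coincides with the paper's.
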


Before going into the proof of the above Proposition, we need the
following technical result. We are interested in the bounds appearing
in Lemma \ref{lem:borne-integrale}. We suppose that $t_{1}$, $t_{2}$,
$x$, $z$ in $\R$ are fixed. To simplify the computations, we introduce
the following notations:
\[
\left(\begin{array}{c}
t'_{1}\\
t'_{2}
\end{array}\right)=(P^{-1})^{T}\left(\begin{array}{c}
t_{1}\\
t_{2}
\end{array}\right)\,,
\]
\[
M_{1}=\frac{2M(|1+p_{2,1}|)}{p_{1,1}}\,.
\]
\begin{lem}
\label{lem:encadrement-potential-separation}Suppose that, for some
$D\geq0$, 
\[
|x-z|\leq D\,,
\]
\[
\left|2B_{2}(p_{1,2}+1)z-t_{2}'\right|\leq\Delta
\]
then
\begin{multline*}
\exp\left(-\frac{1}{4B_{2}}(t_{2}'-2B_{2}(p_{2,1}+1)z)^{2}-B_{2}p_{2,1}^{2}D^{2}-p_{2,1}D\Delta\right)\\
\leq\exp\left(-\frac{1}{4B_{2}}(t_{2}'-2B_{2}(p_{2,1}x+z))^{2}\right)\\
\leq\exp\left(-\frac{1}{4B_{2}}(t_{2}'-2B_{2}(p_{2,1}+1)z)^{2}+p_{2,1}D\Delta\right)\,.
\end{multline*}
\end{lem}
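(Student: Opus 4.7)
The plan is a direct algebraic expansion. Write
\[
t_2' - 2B_2(p_{2,1}x + z) \;=\; \bigl(t_2' - 2B_2(p_{2,1}+1)z\bigr) \;+\; 2B_2\,p_{2,1}(z-x),
\]
so that, setting $A = t_2' - 2B_2(p_{2,1}+1)z$ and $B = 2B_2 p_{2,1}(z-x)$, the identity $(A+B)^2 = A^2 + 2AB + B^2$ gives
\[
-\frac{1}{4B_2}(t_2' - 2B_2(p_{2,1}x+z))^2 \;=\; -\frac{A^2}{4B_2} \;-\; p_{2,1}(z-x)\,A \;-\; B_2\,p_{2,1}^2\,(z-x)^2.
\]
The hypotheses supply $|A| \le \Delta$ and $|z-x| \le D$ (and one should recall that $p_{2,1} \ge 0$ thanks to the sign of $C_1$ from Lemma \ref{lem:asymptotics-of-A-B-C}; otherwise replace $p_{2,1}$ by $|p_{2,1}|$ throughout).

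For the lower bound, I would lower-bound the two "remainder" terms by their absolute values: $-p_{2,1}(z-x)A \ge -p_{2,1}D\Delta$ and $-B_2 p_{2,1}^2(z-x)^2 \ge -B_2 p_{2,1}^2 D^2$, giving
\[
-\frac{1}{4B_2}(t_2' - 2B_2(p_{2,1}x+z))^2 \;\ge\; -\frac{A^2}{4B_2} \;-\; p_{2,1}D\Delta \;-\; B_2\,p_{2,1}^2\,D^2.
\]
Exponentiating yields the lower bound of the lemma.

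For the upper bound, I simply discard the non-positive term $-B_2 p_{2,1}^2(z-x)^2 \le 0$ and use $-p_{2,1}(z-x)A \le p_{2,1}D\Delta$, giving
\[
-\frac{1}{4B_2}(t_2' - 2B_2(p_{2,1}x+z))^2 \;\le\; -\frac{A^2}{4B_2} \;+\; p_{2,1}D\Delta,
\]
and exponentiation finishes the proof. There is no real obstacle here beyond the bookkeeping of the expansion; the asymmetry between the upper and lower bounds (the $B_2 p_{2,1}^2 D^2$ appears only in the lower bound) comes precisely from the fact that $-B_2 p_{2,1}^2(z-x)^2$ is already non-positive, so it is free when bounding from above but must be paid for when bounding from below.
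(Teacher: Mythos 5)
Your proof is correct and is essentially the paper's own argument: the same decomposition $t_2'-2B_2(p_{2,1}x+z)=(t_2'-2B_2(p_{2,1}+1)z)+2B_2p_{2,1}(z-x)$, the same expansion of the square, and the same bounds on the cross term and the quadratic remainder (kept for the lower bound, discarded as non-positive for the upper bound). Your remark about the sign of $p_{2,1}$ is a fair observation — the paper's proof also silently passes from $|p_{2,1}(z-x)|\,|t_2'-2B_2(p_{2,1}+1)z|$ to $p_{2,1}D\Delta$, which presumes $p_{2,1}\geq 0$.
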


\begin{proof}
We have
\begin{multline*}
\exp\left(-\frac{1}{4B_{2}}(t_{2}'-2B_{2}(p_{2,1}x+z))^{2}\right)\\
=\exp\left(-\frac{1}{4B_{2}}(t_{2}'-2B_{2}(p_{2,1}+1)z)+2B_{2}p_{2,1}(z-x))^{2}\right)\\
\leq\exp\left(-\frac{1}{4B_{2}}(t_{2}'-2B_{2}(p_{2,1}+1)z)^{2}-B_{2}p_{2,1}^{2}(z-x)^{2}+|p_{2,1}(z-x)|\times|t_{2}'-2B_{2}(p_{2,1}+1)z|\right)\\
\leq\exp\left(-\frac{1}{4B_{2}}(t_{2}'-2B_{2}(p_{2,1}+1)z)^{2}+p_{2,1}D\Delta\right)
\end{multline*}
and
\begin{multline*}
\exp\left(-\frac{1}{4B_{2}}(t_{2}'-2B_{2}(p_{2,1}x+z))^{2}\right)\\
\geq\exp\left(-\frac{1}{4B_{2}}(t_{2}'-2B_{2}(p_{2,1}+1)z)^{2}-B_{2}p_{2,1}^{2}(z-x)^{2}-|p_{2,1}(z-x)|\times|t_{2}'-2B_{2}(p_{2,1}+1)z|\right)\\
\geq\exp\left(-\frac{1}{4B_{2}}(t_{2}'-2B_{2}(p_{2,1}+1)z)^{2}-B_{2}p_{2,1}^{2}D^{2}-p_{2,1}D\Delta\right)
\end{multline*}
\end{proof}

\begin{proof}[Proof of Proposition \ref{lem:U-mixing}]

We write the proof in the case $k\in\{1,2,\dots,n-1\}$ and for $S_{2n|2k+1}^{U}$
(the other cases being very similar). Let $\varphi$ be a test function
(in $\mathcal{C}_{b}^{+}(\R)$). By Remark \ref{rem:valeurs-dans-compact},
we have that $U_{2k-1}^{(2)}$ takes its values in $C_{2k-1}(\Delta)$.
We write, for any $z_{2k-2}^{(2)}\in\R$, $z_{2k}^{(1)}\in C_{2k-1}(\Delta)$,
$z\in\R$ (like in the proof of Lemma \ref{lem:U-markov})
\begin{multline}
\E(\varphi(U_{2k+1})|U_{2k-1}=(z_{2k-2}^{(2)},z_{2k}^{(1)}))=\\
\int_{\R^{2}}\varphi(z_{2k}^{(2)},z_{2k+2}^{(1)})\frac{S_{2n|2k-2}^{\Delta}((z,z_{2k-2}^{(2)}),(z_{2k}^{(1)},z_{2k}^{(2)}))}{\int_{\R}S_{2n|2k-2}^{\Delta}((z,z_{2k-2}^{(2)}),(z_{2k}^{(1)},z'))dz'}\\
\times\left(\int_{\R}S_{2n|2k}^{\Delta}((z_{2k}^{(1)},z_{2k}^{(2)}),(z_{2k+2}^{(1)},z_{2k+2}^{(2)})dz_{2k+2}^{(2)}\right)dz_{2k}^{(2)}dz_{2k+2}^{(1)}\geq\\
\mbox{(by Equations (\ref{eq:min-S-Delta}), (\ref{eq:maj-S-Delta})) }\\
\int_{\R^{2}}\varphi(z_{2k}^{(2)},z_{2k+2}^{(1)})\epsilon_{2k-1}^{2}\\
\times\frac{\widetilde{R}_{2n:2k+2}^{\Delta}((z_{2k}^{(1)},z_{2k}^{(2)}),\R^{2})\1_{C_{2k-1}(\Delta)}(z_{2k}^{(1)})\1_{C_{2k}(\Delta)}(z_{2k}^{(2)})\psi_{2k}(z_{2k}^{(1)},z_{2k}^{(2)})Q(z_{2k}^{(1)},z_{2k}^{(2)})}{\int_{\R}\tdR_{2n:2k+2}((z_{2k}^{(1)},z'),\R^{2})\1_{C_{2k-1}(\Delta)}(z_{2k}^{(1)})\1_{C_{2k}(\Delta)}(z')\psi_{2k}(z_{2k}^{(1)},z')Q(z_{2k}^{(1)},z')dz'}\\
\times\left(\int_{\R}S_{2n|2k}^{\Delta}((z_{2k}^{(1)},z_{2k}^{(2)}),(z_{2k+2}^{(1)},z_{2k+2}^{(2)})dz_{2k+2}^{(2)}\right)dz_{2k}^{(2)}dz_{2k+2}^{(1)}\label{eq:esp-cond-U-01}
\end{multline}
From Lemma \ref{lem:encadrement-potentiel} and using the same kind
of computation as in the proof of Lemma  \ref{lem:borne-integrale}
and Equation (\ref{eq:formule-psi_chapeau}), we get, for all $z_{2k}$
such that $z_{2k}^{(2)}\in C_{2k}(\Delta)$,
\begin{multline*}
\psi_{2k}(z_{2k}^{(1)},z_{2k}^{(2)})\geq\sigma_{1}\sigma_{2}e^{-2M|z_{2k}^{(1)}-z_{2k}^{(2)}|-\tau\left(M+\frac{M^{2}}{2}\right)}\\
\times\exp\left(-A_{2}(z_{2k}^{(1)})^{2}-B_{2}(z_{2k}^{(2)})^{2}+C_{1}z_{2k}^{(1)}z_{2k}^{(2)}+A_{1}^{y_{(2k-1)\tau:(2k)\tau}}z_{2k}^{(1)}+B_{1}^{y_{(2k-1)\tau:(2k)\tau}}z_{2k}^{(2)}+C_{0}^{y_{(2k-1)\tau:(2k)\tau}}\right)\\
=\sigma_{1}\sigma_{2}e^{-2M|z_{2k}^{(1)}-z_{2k}^{(2)}|-\tau\left(M+\frac{M^{2}}{2}\right)}\times\exp\left[-\frac{1}{4A_{2}}\left(\frac{A_{1}^{y_{(2k-1)\tau:(2k)\tau}}}{p_{1,1}}-\frac{p_{2,1}B_{1}^{y_{(2k-1)\tau:(2k)\tau}}}{p_{1,1}}-2A_{2}p_{1,1}z_{2k}^{(1)}\right)^{2}\right.\\
-\frac{1}{4B_{2}}\left(B_{1}^{y_{(2k-1)\tau:(2k)\tau}}-2B_{2}(p_{2,1}z_{2k}^{(1)}+z_{2k}^{(2)})\right)^{2}+C_{0}^{y_{(2k-1)\tau:(2k)\tau}}\\
\left.+\frac{1}{4}(A_{1}^{y_{(2k-1)\tau:(2k)\tau}},B_{1}^{y_{(2k-1)\tau:(2k)\tau}})\kappa^{-1}(A_{1}^{y_{(2k-1)\tau:(2k)\tau}},B_{1}^{y_{(2k-1)\tau:(2k)\tau}})^{T}\right]\\
\geq\mbox{(by Lemma \ref{lem:encadrement-potential-separation}) }\sigma_{1}\sigma_{2}e^{-2M|z_{2k}^{(1)}-z_{2k}^{(2)}|+C_{0}^{y_{(2k-1)\tau:(2k)\tau}}}\times e^{-\tau\left(M+\frac{M^{2}}{2}\right)}\\
\times\exp\left[-\frac{1}{4A_{2}}\left(\frac{A_{1}^{y_{(2k-1)\tau:(2k)\tau}}}{p_{1,1}}-\frac{p_{2,1}B_{1}^{y_{(2k-1)\tau:(2k)\tau}}}{p_{1,1}}-2A_{2}p_{1,1}z_{2k}^{(1)}\right)^{2}\right.\\
-\frac{1}{4B_{2}}\left(B_{1}^{y_{(2k-1)\tau:(2k)\tau}}-2B_{2}(p_{2,1}+1)z_{2k}^{(2)}\right)^{2}-B_{2}p_{2,1}^{2}(z_{2k}^{(1)}-z_{2k}^{(2)})^{2}-\Delta p_{2,1}|z_{2k}^{(1)}-z_{2k}^{(2)}|\\
\left.+\frac{1}{4}(A_{1}^{y_{(2k-1)\tau:(2k)\tau}},B_{1}^{y_{(2k-1)\tau:(2k)\tau}})\kappa^{-1}(A_{1}^{y_{(2k-1)\tau:(2k)\tau}},B_{1}^{y_{(2k-1)\tau:(2k)\tau}})^{T}\right]\,.
\end{multline*}
We set 
\[
\psi_{2k}^{(1)}(x)=\exp\left(-\frac{1}{4A_{2}}\left(\frac{A_{1}^{y_{(2k-1)\tau:(2k)\tau}}}{p_{1,1}}-\frac{p_{2,1}B_{1}^{y_{(2k-1)\tau:(2k2)\tau}}}{p_{1,1}}-2A_{2}p_{1,1}x\right)^{2}\right)\,,
\]
\[
\psi_{2k}^{(2)}(x)=\exp\left(-\frac{1}{4B_{2}}\left(B_{1}^{y_{(2k-1)\tau:(2k)\tau}}-2B_{2}(p_{2,1}+1)x\right)^{2}\right)
\]
In the same way as above:
\begin{multline*}
\psi_{2k}(z_{2k}^{(1)},z_{2k}^{(2)})\leq\sigma_{1}\sigma_{2}e^{2M|z_{2k}^{(1)}-z_{2k}^{(2)}|+\tau\left(M+\frac{M^{2}}{2}\right)+C_{0}^{y_{(2k-1)\tau:(2k)\tau}}}\psi_{2k}^{(1)}(z_{2k}^{(1)})\psi_{2k}^{(2)}(z_{2k}^{(2)})\\
\times\exp\left(\Delta p_{2,1}|z_{2k}^{(1)}-z_{2k}^{(2)}|+\frac{1}{4}(A_{1}^{y_{(2k-1)\tau:(2k)\tau}},B_{1}^{y_{(2k-1)\tau:(2k)\tau}})\kappa^{-1}(A_{1}^{y_{(2k-1)\tau:(2k)\tau}},B_{1}^{y_{(2k-1)\tau:(2k)\tau}})^{T}\right)\,.
\end{multline*}

From Lemma \ref{lem:encadrement-transition}, we get for all $z_{2k}^{(1)}\in C_{2k-1}(\Delta)$,
$z_{2k}^{(2)}\in C_{2k}(\Delta)$ ,
\begin{multline*}
\frac{1}{\sqrt{2\pi\tau}}\exp\left(-\frac{1}{2\tau}\left(\frac{\Delta}{B_{2}(1+p_{2,1})}+D_{2k}\right)^{2}-M\left(\frac{\Delta}{B_{2}(1+p_{2,1})}+D_{2k}\right)-\tau\left(\frac{M}{2}+\frac{M^{2}}{2}\right)\right)\\
\leq Q(z_{2k}^{(1)},z_{2k}^{(2)})\leq\frac{1}{\sqrt{2\pi\tau}}\exp\left(M\left(\frac{\Delta}{B_{2}(1+p_{2,1})}+D_{2k}\right)+\frac{M\tau}{2}\right)
\end{multline*}

Looking back at (\ref{eq:esp-cond-U-01}), we get%
\begin{multline*}
\E(\varphi(U_{2k+1})|U_{2k-1}=(z_{2k-2}^{(2)},z_{2k}^{(1)}))\geq\\
\int_{\R^{2}}\varphi(z_{2k}^{(2)},z_{2k+2}^{(1)})\epsilon_{2k-1}^{2}(\epsilon_{2k}')^{2}\frac{\widetilde{R}_{2n:2k+2}^{\Delta}((z_{2k}^{(1)},z_{2k}^{(2)}),\R^{2})\1_{C_{2k-1}(\Delta)}(z_{2k}^{(1)})\1_{C_{2k}(\Delta)}(z_{2k}^{(2)})\psi_{2k}^{(2)}(z_{2k}^{(2)})}{\int_{\R}\tdR_{2n:2k+2}((z_{2k}^{(1)},z'),\R^{2})\1_{C_{2k-1}(\Delta)}(z_{2k}^{(1)})\1_{C_{2k}(\Delta)}(z_{2k}^{(2)})\psi_{2k}^{(2)}(z')dz'}\\
\times\left(\int_{\R}S_{2n|2k}^{\Delta}((z_{2k}^{(1)},z_{2k}^{(2)}),(z_{2k+2}^{(1)},z_{2k+2}^{(2)})dz_{2k+2}^{(2)}\right)dz_{2k}^{(2)}dz_{2k+2}^{(1)}\,.
\end{multline*}
As $R_{2n:2k+2}((z_{2k}^{(1)},z'),.)$ and $S_{2n|2k}^{\Delta}((z_{2k}^{(1)},z'),.)$
do not depend on $z_{2k}^{(1)}$ for any  $z'$, we get that $S_{2n|2k+1}^{U}$
is $(1-\epsilon_{2k-1}^{2}(\epsilon'_{2k})^{2})$-contracting (remember
Section \ref{subsec:Notations}).

\end{proof}

\subsection{New representation }
\begin{prop}
\label{prop:representation-avec-U}Let $n\geq1$. If we suppose that
$Z_{0}^{(2)}$ is of law $\psi_{2n|0}^{\Delta}(0,.)\bullet\mu$, then,
for all test function $\varphi$ (in $\mathcal{C}_{b}^{+}(\R)$),
\begin{equation}
\frac{\E(\varphi(U_{2n+1}^{(1)})\prod_{1\leq i\leq n}\psi_{2i-1}^{\Delta}(U_{2i-1}))}{\E(\prod_{1\leq i\leq n}\psi_{2i-1}^{\Delta}(U_{2i-1}))}=\left(\overline{R}_{2n}^{\Delta}\overline{R}_{2n-1}^{\Delta}\dots\overline{R}_{1}^{\Delta}(\mu)\right)(\varphi)\,.\label{eq:representation-U}
\end{equation}
If we suppose that $Z_{0}^{(2)}$ is of law $\psi_{2n|0}^{\Delta,(p)}(0,.)\bullet\mu$,
then, for all test function $\varphi$ (in $\mathcal{C}_{b}^{+}(\R)$),
\begin{equation}
\frac{\E(\varphi(U_{2n-1}^{(p)(2)})\prod_{1\leq i\leq n}\psi_{2i-1}^{\Delta}(U_{2i-1}^{(p)}))}{\E(\prod_{1\leq i\leq n}\psi_{2i-1}^{\Delta}(U_{2i-1}^{(p)}))}=\left(\overline{R}_{2n-1}^{\Delta}\overline{R}_{2n-2}^{\Delta}\dots\overline{R}_{1}^{\Delta}(\mu)\right)(\varphi)\,.\label{eq:representation-U(p)}
\end{equation}
\end{prop}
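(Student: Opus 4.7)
The plan is to unwind both sides using the recursive definitions of $S^\Delta_{2n|2k}$ and $\psi^\Delta_{2n|2k}$ on one side, and by re-grouping consecutive pairs $R^\Delta_{2k-1}R^\Delta_{2k}$ into the ``doubled'' kernels $\widetilde{R}^\Delta_{2k}$ on the other side, then match the two finite-dimensional densities.

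First I would unfold the left-hand side. Using $S^\Delta_{2n|2k}(x,dx') = (\psi^\Delta_{2n|2k+2}(x')/\psi^\Delta_{2n|2k}(x))\,\widetilde{R}^\Delta_{2k+2}(x,dx')$, a telescoping product, together with the convention $\psi^\Delta_{2n|2n}\equiv 1$, yields
\[
\psi^\Delta_{2n|0}(z_0)\prod_{k=0}^{n-1}S^\Delta_{2n|2k}(z_{2k},dz_{2k+2})=\prod_{k=0}^{n-1}\widetilde{R}^\Delta_{2k+2}(z_{2k},dz_{2k+2}).
\]
Since $Z_0^{(2)}\sim\psi^\Delta_{2n|0}(0,\cdot)\bullet\mu$ and $\psi^\Delta_{2n|0}(0,z_0^{(2)})$ depends only on $z_0^{(2)}$, the joint law of $(Z_0,Z_2,\dots,Z_{2n})$ becomes
\[
\frac{1}{\langle\mu,\psi^\Delta_{2n|0}(0,\cdot)\rangle}\,\mu(dz_0^{(2)})\prod_{k=0}^{n-1}\widetilde{R}^\Delta_{2k+2}(z_{2k},dz_{2k+2}),\qquad z_0=(0,z_0^{(2)}).
\]

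Next I would unfold the right-hand side by writing the filter as a ratio of integrals against $\prod_{k=1}^{2n}R^\Delta_k(x_{k-1},dx_k)$ and grouping consecutive kernels as pairs. Since the indicator $\1_{C_{2k-1}(\Delta)}(x_{2k-1})$ carried by $\psi^\Delta_{2k-1}$ forces $x_{2k-1}$ into $C_{2k-1}(\Delta)$, a direct expansion in both cases $x_{2k-2}\in C_{2k-2}(\Delta)$ and $x_{2k-2}\notin C_{2k-2}(\Delta)$ gives
\[
R^\Delta_{2k-1}(x_{2k-2},dx_{2k-1})\,R^\Delta_{2k}(x_{2k-1},dx_{2k}) = \psi_{2k-1}(x_{2k-2},x_{2k-1})\,\widetilde{R}^\Delta_{2k}((\cdot,x_{2k-2}),d(x_{2k-1},x_{2k})),
\]
the first argument being irrelevant as $\widetilde{R}^\Delta_{2k}$ only depends on its second component. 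Relabelling $z_{2k}=(x_{2k-1},x_{2k})$ and $z_0=(0,x_0)$, the RHS ratio rewrites as
\[
\frac{\int\mu(dz_0^{(2)})\prod_{k=1}^n\widetilde{R}^\Delta_{2k}(z_{2k-2},dz_{2k})\prod_{k=1}^n\psi_{2k-1}(z_{2k-2}^{(2)},z_{2k}^{(1)})\,\varphi(z_{2n}^{(2)})}{\int\mu(dz_0^{(2)})\prod_{k=1}^n\widetilde{R}^\Delta_{2k}(z_{2k-2},dz_{2k})\prod_{k=1}^n\psi_{2k-1}(z_{2k-2}^{(2)},z_{2k}^{(1)})}.
\]

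Finally I would plug the joint law from the first step into the right-hand side of (\ref{eq:representation-U}); the normalizing constant $\langle\mu,\psi^\Delta_{2n|0}(0,\cdot)\rangle$ cancels between numerator and denominator. Recognizing $U_{2i-1}=(Z_{2i-2}^{(2)},Z_{2i}^{(1)})$ and $U_{2n+1}^{(1)}=Z_{2n}^{(2)}$, and noting that on the support of $\widetilde{R}^\Delta_{2k}$ one has $z_{2k}^{(1)}\in C_{2k-1}(\Delta)$, so that $\psi_{2k-1}(\cdot,z_{2k}^{(1)})=\psi^\Delta_{2k-1}(\cdot,z_{2k}^{(1)})$, the two expressions coincide, giving (\ref{eq:representation-U}). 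Formula (\ref{eq:representation-U(p)}) is obtained by the identical argument applied to the Feynman--Kac sequence in which the topmost potential $\psi^\Delta_{2n}$ is replaced by $1$: this is precisely the modification turning $\psi^\Delta_{2n|2k}$ into $\psi^{\Delta,(p)}_{2n|2k}$ and terminates the telescoping one update step earlier, so the terminal coordinate is now $Z_{2n-2}^{(p)(2)}=U_{2n-1}^{(p)(2)}$.

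The main obstacle is the second step: one must carefully check that the grouping $R^\Delta_{2k-1}R^\Delta_{2k}=\psi_{2k-1}\widetilde{R}^\Delta_{2k}$ holds in both branches of the definition of $\widetilde{R}^\Delta_{2k}$. In the ``otherwise'' branch, the shorthand $\xi_1(D_{2k-1},\Delta)dx'$ must be read consistently with the ``off-compact'' branch of $R^\Delta$, namely as a Lebesgue density in the first coordinate times the natural $Q$ transition in the second coordinate, so that the factor $Q(x_{2k-1},dx_{2k})$ coming from $R^\Delta_{2k}$ is correctly reproduced by $\widetilde{R}^\Delta_{2k}$.
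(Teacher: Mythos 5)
Your proof is correct and follows essentially the same route as the paper's: telescope the product of the $S_{2n|2k}^{\Delta}$ kernels so that the ratios $\psi_{2n|2k+2}^{\Delta}/\psi_{2n|2k}^{\Delta}$ cancel against the biased initial law $\psi_{2n|0}^{\Delta}(0,.)\bullet\mu$, leaving the product of the $\widetilde{R}_{2k}^{\Delta}$ kernels, and then identify this with the product of the $R_{k}^{\Delta}$ via Equation (\ref{eq:recursive-representation}). Your explicit verification of the ``otherwise'' branch of $\widetilde{R}_{2k}^{\Delta}$ is in fact more careful than the paper's own argument (which only displays the $\widetilde{Q}^{2}$ branch), and the reading you propose for $\xi_{1}(D_{2k-1},\Delta)\,dx'$ is precisely the one needed for the pairing $R_{2k-1}^{\Delta}R_{2k}^{\Delta}=\psi_{2k-1}\widetilde{R}_{2k}^{\Delta}$ to hold exactly when the initial measure charges $C_{0}(\Delta)^{\complement}$.
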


\begin{rem}
\label{rem:sur-representation}Recall that we are working with a fixed
observation $(Y_{s})_{s\geq0}=(y_{s})_{s\geq0}$. The above Proposition
tells that, for all $n$, $\overline{R}_{n}^{\Delta}\overline{R}_{n-1}^{\Delta}\dots\overline{R}_{1}^{\Delta}(\mu)$
can be written as the $n$-th term of a Feynman-Kac sequence based
on mixing kernels (by Proposition \ref{lem:U-mixing}). We can apply
Proposition \ref{prop:representation-FK-sequence} to this Feynman-Kac
sequence. This representation and this result are also true for a
measure $\overline{R}_{n}^{\Delta}\overline{R}_{n-1}^{\Delta}\dots\overline{R}_{k}^{\Delta}(\eta)$
for any $k\leq n$, $\eta$ probability measure on $\R$. %
\end{rem}

\begin{proof}
We write the proof only for Equation (\ref{eq:representation-U}).
The computation leading to Equation (\ref{eq:representation-U(p)})
would be very similar. It would simplify nicely because we replace
$\psi_{2n}^{\Delta}$ by $1$ in the definition of the $S_{2n|\dots}^{\Delta(p)}$,
$\psi_{2n|\dots}^{\Delta(p)}$. 

We have, for any test function $\varphi$ (in $\mathcal{C}_{b}^{+}(\R)$),
\begin{multline*}
\E(\varphi(U_{2n+1}^{(1)})\prod_{1\leq i\leq n}\psi_{2i-1}^{\Delta}(U_{2i-1}))=\\
\int_{\R\times(\R^{2})^{n}}\varphi(z_{2n}^{(2)})\prod_{0\leq k\leq n-1}\left[S_{2n|2k}^{\Delta}(z_{2k},z_{2k+2})\psi_{2k+1}^{\Delta}(z_{2k}^{(2)},z_{2k+2}^{(1)})\right]\\
\delta_{0}(dz_{0}^{(1)})(\psi_{2n|0}^{\Delta}(0,.)\bullet\mu)(dz_{0}^{(2)})dz_{2}\dots dz_{2n}=\\
\int_{\R\times(\R^{2})^{n}}\varphi(z_{2n}^{(2)})\prod_{0\leq k\leq n-1}\left[\frac{\psi_{2n|2k+2}^{\Delta}(z_{2k+2})}{\psi_{2n|2k}^{\Delta}(z_{2k})}\widetilde{R}_{2k+2}^{\Delta}(z_{2k},dz_{2k+2})\psi_{2k+1}^{\Delta}(z_{2k}^{(2)},z_{2k+2}^{(1)})\right]\\
\times\delta_{0}(dz_{0}^{(1)})(\psi_{2n|0}^{\Delta}(0,.)\bullet\mu)(dz_{0}^{(2)})dz_{2}\dots dz_{2n}=\\
\int_{\R\times(\R^{2})^{n}}\varphi(z_{2n}^{(2)})\prod_{0\leq k\leq n-1}\left[\widetilde{R}_{2k+2}^{\Delta}(z_{2k},dz_{2k+2})\psi_{2k+1}^{\Delta}(z_{2k}^{(2)},z_{2k+2}^{(1)})\right]\\
\times\frac{1}{\mu(\psi_{2n|0}^{\Delta}(0,.))}\delta_{0}(dz_{0}^{(1)})\mu(dz_{0}^{(2)})dz_{2}\dots dz_{2n}=\\
\int_{\R\times(\R^{2})^{n}}\varphi(z_{2n}^{(2)})\prod_{0\leq k\leq n-1}\left[\psi_{2k+2}^{\Delta}(z_{2k+2}^{(1)},z_{2k+2}^{(2)})\psi_{2k+1}^{\Delta}(z_{2k}^{(2)},z_{2k+2}^{(1)})\widetilde{Q}^{2}(z_{2k},dz_{2k+2})\right]\\
\times\frac{1}{\mu(\psi_{2n|0}^{\Delta}(0,.))}\delta_{0}(dz_{0}^{(1)})\mu(dz_{0}^{(2)})dz_{2}\dots dz_{2n}\,,
\end{multline*}
which proves the desired result (recall Equation (\ref{eq:recursive-representation})).
\end{proof}

\section{Stability results\label{sec:Stability-results} }

In this section, the observations are non longer fixed.

\subsection{Stability of the truncated filter}

We show here that a product of coefficients $\tau_{.}$ decays geometrically
in expectation (see the Lemma below). These coefficients are the contraction
coefficients of the operators $S_{.}^{U}$, $S_{.}^{U,(p)}$, which
are related to the truncated filter through Proposition \ref{prop:representation-avec-U}.
This is why we say that the result below means the stability of the
truncated filter.

We set, for all $t$ in $\R$, $k\geq1$, 
\[
\tau(t,\Delta)=1-(\epsilon'(t,\Delta)\epsilon(t,\Delta))^{2}\,,
\]
\[
\tau_{k}=1-(\epsilon'_{k}\epsilon_{k-1})^{2}\,.
\]
 We set, for $L>0$, 
\[
\widetilde{\alpha}(L)=\frac{192C\sqrt{\tau}}{L\sqrt{\pi}}\exp\left(-\frac{1}{2}\left(\frac{L}{12C\sqrt{2\tau}}\right)^{2}\right)\,.
\]
We fix $L>0$ such that 
\begin{equation}
L>3|m_{0}|+3CM\tau\mbox{ and }\widetilde{\alpha}(L)\leq\frac{1}{4}\,.\label{eq:cond-L}
\end{equation}
We set 
\[
\rho=\frac{\tau(L,\Delta)+\sqrt{\tau(L,\Delta)^{2}+4\widetilde{\alpha}(L)(1-\tau(L,\Delta))}}{2}\,.
\]
\begin{lem}
\label{lem:contraction}For $0\leq k\leq n-1$, we have
\[
\E(\tau_{2n+1}\tau_{2n-1}\dots\tau_{2k+3}|\mathcal{F}_{0:(2k+1)\tau})\leq\left(1-\frac{\epsilon(L,\Delta)^{2}\epsilon'(L,\Delta)^{2}}{2}\right)^{\left\lceil \frac{(n-k-2)_{+}}{2}\right\rceil }\,,
\]
 
\[
\E(\tau_{2n}\tau_{2n-2}\dots\tau_{2k+2}|\mathcal{F}_{0:2k\tau})\leq\left(1-\frac{\epsilon(L,\Delta)^{2}\epsilon'(L,\Delta)^{2}}{2}\right)^{\left\lceil \frac{(n-k-2)_{+}}{2}\right\rceil }\,.
\]
\end{lem}

\begin{proof}
We only write the proof of the second Equation above (the proof of
the other equation is very similar). We take $L>0$ and we set
\begin{eqnarray*}
\theta_{2k} & = & \begin{cases}
\tau(L,\Delta) & \mbox{if }|m_{2k}-m_{2k-1}|<L\mbox{ and }|m_{2k-1}-m_{2k-2}|<L\\
1 & \mbox{otherwise.}
\end{cases}
\end{eqnarray*}
For all $k$, we have $\tau_{2k}\leq\theta_{2k}$. For any $k\geq1$,
$|m_{k}-m_{k-1}|$ is a function of $Y_{(k-2)_{+}\tau:k\tau}$. So,
for all $k$, $\theta_{2k}$ is a function of $Y_{(2k-3)_{+}\tau:2k\tau}$
We fix $k\geq0$ and we define, for $n\geq0$, 
\[
e_{2n|2k+2}=\begin{cases}
\E(\theta_{2n}\theta_{2n-2}\dots\theta_{2k+2}|\mathcal{F}_{2k\tau}) & \mbox{ if }k\leq n-1\,,\\
1 & \mbox{otherwise.}
\end{cases}
\]
We suppose now that $n\geq k+2$. We then have 
\[
e_{2n|2k+2}=\E(\E(\theta_{2n}\theta_{2n-2}|\mathcal{F}_{(2n-3)\tau})\theta_{2n-4}\dots\theta_{2k+2}|\mathcal{F}_{2k\tau})
\]
and
\begin{multline*}
\E(\theta_{2n}\theta_{2n-2}|\mathcal{F}_{(2n-3)\tau})\\
=\E(\theta_{2n-2}(1-\1_{[0,L)}(D_{2n})\1_{[0,L)}(D_{2n-1}))+\tau(L,\Delta)\theta_{2n-2}\1_{[0,L)}(D_{2n})\1_{[0,L)}(D_{2n-1})|\mathcal{F}_{(2n-3)\tau})\\
=\E(\theta_{2n-2}\tau(L,\Delta)+(1-\tau(L,\Delta))\theta_{2n-2}(1-\1_{[0,L)}(D_{2n})\1_{[0,L)}(D_{2n-1}))|\mathcal{F}_{(2n-3)\tau})\\
\leq\tau(L,\Delta)\E(\theta_{2n-2}|\mathcal{F}_{(2n-3)\tau})+(1-\tau(L,\Delta))[\p(|m_{2n}-m_{2n-1}|\geq L|\mathcal{F}_{(2n-3)\tau})\\
+\p(|m_{2n-1}-m_{2n-2}|\geq L|\mathcal{F}_{(2n-3)\tau})]\,.
\end{multline*}
Using Equation (\ref{eq:variation-m_k}), we get 
\begin{multline*}
\E(\theta_{2n}\theta_{2n-2}|\mathcal{F}_{(2n-3)\tau})\leq\tau(L,\Delta)\E(\theta_{2n-2}|\mathcal{F}_{(2n-3)\tau})\\
+(1-\tau(L,\Delta))\left(\p\left(C\mathcal{V}_{(2n-2)\tau,2n\tau}\geq\frac{L}{3}|\mathcal{F}_{(2n-3)\tau}\right)+\p\left(C\mathcal{W}_{(2n-2)\tau,2n\tau}(1+\frac{1}{\theta})\geq\frac{L}{3}|\mathcal{F}_{(2n-3)\tau}\right)\right)\\
+(1-\tau(L,\Delta))\left(\p\left(C\mathcal{V}_{(2n-3)\tau,(2n-1)\tau}\geq\frac{L}{3}|\mathcal{F}_{(2n-3)\tau}\right)+\p\left(C\mathcal{W}_{(2n-3)\tau,(2n-1)\tau}(1+\frac{1}{\theta})\geq\frac{L}{3}|\mathcal{F}_{(2n-3)\tau}\right)\right)\\
\leq\tau(L,\Delta)\E(\theta_{2n-2}|\mathcal{F}_{(2n-3)\tau})+4(1-\tau(L,\Delta))\p\left(C\mathcal{V}_{0,2\tau}\geq\frac{L}{6}\right)\\
\mbox{(like in Equations (\ref{eq:maj-Vcal}), (\ref{eq:maj-Wcal}))}\\
\leq\tau(L,\Delta)\E(\theta_{2n-2}|\mathcal{F}_{(2n-3)\tau})+8(1-\tau(L,\Delta))\p\left(2C|W_{2\tau}|\geq\frac{L}{6}\right)\\
\mbox{(using Equation (\ref{eq:queue-gaussienne}))}\\
\leq\tau(L,\Delta)\E(\theta_{2n-2}|\mathcal{F}_{(2n-3)\tau})+(1-\tau(L,\Delta))\widetilde{\alpha}(L)\,.
\end{multline*}
The constant $\rho$ is the positive root of the polynomial $X^{2}-\tau(L,\Delta)X-(1-\tau(L,\Delta))\widetilde{\alpha}(L)$.
So we have
\[
1>\rho=\tau(L,\Delta)+\frac{1}{\rho}(1-\tau(L,\Delta))\widetilde{\alpha}(L)\geq\tau(L,\Delta)+(1-\tau(L,\Delta))\widetilde{\alpha}(L)\,.
\]
So, we have
\[
e_{2n|2k+2}\leq\tau(L,\Delta)e_{2n-2|2k+2}+(1-\tau(L,\Delta))\widetilde{\alpha}(L)e_{2n-4|2k+2}\leq\rho\times\sup(e_{2n-2|2k+2},e_{2n-4|2k+2})\,.
\]
Suppose now that $k$ is  fixed. We have 
\[
e_{2k+2|2k+2}\leq1\,,\,e_{2k+4|2k+2}\leq1\,.
\]
So, by recurrence, 
\[
e_{2n|2k+2}\leq\rho^{\left\lceil \frac{(n-k-2)_{+}}{2}\right\rceil }\,.
\]
As $\widetilde{\alpha}(L)\leq1/4$, we have
\[
\rho\leq\frac{1}{2}(\tau(L,\Delta)+\sqrt{\tau(L,\Delta)^{2}+1-\tau(L,\Delta)})\leq\frac{\tau(L,\Delta)+1}{2}=1-\frac{(\epsilon(L,\Delta)^{2}\epsilon'(L,\Delta))^{2}}{2}\,.
\]
\end{proof}
\begin{proof}[Proof of Lemma \ref{lem:synthese-des-resultats}]We
write the proof in the case where $n$ and $k$ are even. If $k$
was even and $n$ was odd, we would have to use the operators $S_{\dots}^{(p)U}$.
If $k$ was odd, the proof would be very similar but would require
to introduce new and heavy notations.

By Proposition \ref{prop:representation-avec-U}, Remark \ref{rem:sur-representation}
and Equation (\ref{eq:loi-de-U1}), we have, for all $\mu$ in $\mathcal{P}(\R)$
and all test function $\varphi$ in $\mathcal{C}_{b}^{+}(\R)$, 
\begin{multline*}
\left(\overline{R}_{n}^{\Delta}\overline{R}_{n-1}^{\Delta}\dots\overline{R}_{k+1}^{\Delta}(\mu)\right)(\varphi)\propto\int\varphi(u_{n+1}^{(1)})\left(\prod_{i=1}^{(n-k)/2}\psi_{k+2i-1}^{\Delta}(u_{k+2i-1})\right)\\
\times\left(\prod_{i=1}^{(n-k-2)/2}S_{n|k+2i+1}^{U}(u_{k+2i-1},du_{k+2i+1})\right)\\
\times S_{n|n+1}^{U}(u_{n-1},du_{n+1}^{(1)})\left(\int_{z'\in\R}S_{n|k}^{\Delta}((0,u_{k+1}^{(1)}),(du_{k+1}^{(2)},dz')\right)(\psi_{n|k}^{\Delta}(0,.)\bullet\mu)(du_{k+1}^{(1)})\,,
\end{multline*}
where we integrate over $u_{k+1}^{(1)}\in\R$, $u_{k+1},\,u_{k+3},\dots,\,u_{n-1}\in\R^{2}$,
$u_{n+1}^{(1)}\in\R$.%

By Proposition \ref{lem:U-mixing}, we know that $S_{n|k+2i+1}^{U}$
is $(\epsilon_{k+2i-1}^{2}(\epsilon'_{k+2i})^{2})$-mixing for all
$i$ in $\{1,2,\dots,1+(n-k)/2\}$. We now apply Proposition \ref{prop:representation-FK-sequence}
with the $S_{n|k+2i+1}^{U}$ playing the roles of the $\mathfrak{Q}_{\dots}$
and the $\psi_{k+2i-1}^{\Delta}$ playing the roles of the $\Psi_{\dots}$.
By Equations (\ref{eq:maj-erreur-locale-01}), (\ref{eq:maj-err-locale-02}),
we then have, for all $\mu$ and $\mu'$ in $\mathcal{P}(\R)$, 
\begin{multline*}
\left\Vert \overline{R}_{n}^{\Delta}\overline{R}_{n-1}^{\Delta}\dots\overline{R}_{k+1}^{\Delta}(\mu)-\overline{R}_{n}^{\Delta}\overline{R}_{n-1}^{\Delta}\dots\overline{R}_{k+1}^{\Delta}(\mu')\right\Vert \\
\leq\prod_{i=1}^{(n-k)/2}(1-\epsilon_{k+2i-1}^{2}(\epsilon'_{k+2i})^{2})\times2\inf\left(1,\frac{\Vert\psi_{n|k}^{\Delta}(0,.)\bullet\mu-\psi_{n|k}^{\Delta}(0,.)\bullet\mu'\Vert}{\epsilon_{k+1}^{2}(\epsilon'_{k+2})^{2}}\right)\,.
\end{multline*}
By Equations (\ref{eq:R-tilde-mixing}), (\ref{eq:maj-erreur-locale-01}),
(\ref{eq:maj-err-locale-02}), we have
\begin{eqnarray*}
\Vert\psi_{n|k}^{\Delta}(0,.)\bullet\mu-\psi_{n|k}^{\Delta}(0,.)\bullet\mu'\Vert & \leq & 2\inf\left(1,\frac{\Vert\psi_{n|k}^{\Delta}(0,.)\Vert_{\infty}}{\langle\mu,\psi_{n|k}^{\Delta}(0,.)\rangle}\Vert\mu-\mu'\Vert\right)\\
 & \leq & 2\inf\left(1,\frac{\Vert\mu-\mu'\Vert}{\epsilon_{k+1}^{2}}\right)\,.
\end{eqnarray*}
From which we get the result.\end{proof}

\subsection{Approximation of the optimal filter by the truncated filter}

We recall that ``$\preD$'' is defined in Definition \ref{def:preceq}.

\begin{proof}[Proof of Proposition \ref{prop:approx-par-filtre-robuste}]We
write the proof only for Equation (\ref{eq:approx-pi}), the proof
for Equation (\ref{eq:approx-pi-prime}) being very similar. We have
\begin{equation}
\Vert\pi_{n\tau}-\pi_{n\tau}^{\Delta}\Vert\leq\Vert\pi_{n\tau}-\overline{R}_{n}^{\Delta}(\pi_{(n-1)\tau})\Vert+\sum_{1\leq k\leq n-1}\Vert\overline{R}_{n:k+1}^{\Delta}(\pi_{k\tau})-\overline{R}_{n:k+1}^{\Delta}(\overline{R}_{k}^{\Delta}(\pi_{(k-1)\tau}))\Vert\,.\label{eq:somme-telescopique-01}
\end{equation}
Let us fix $k\in\{1,2,\dots,n-1\}$. %
{} From Lemma \ref{lem:synthese-des-resultats}, we get 
\begin{multline}
\E(\Vert\overline{R}_{n:k+1}^{\Delta}(\pi_{k\tau})-\overline{R}_{n:k+1}^{\Delta}(\overline{R}_{k}^{\Delta}(\pi_{(k-1)\tau}))\Vert)\\
\leq\E\left(\E\left(\left.\prod_{3\leq i\leq\lfloor\frac{n-k}{2}\rfloor}(1-(\epsilon_{k+2i-1}^{2}(\epsilon'_{k+2i})^{2}))\right|\mathcal{F}_{(k+2)\tau}\right)\times2\inf\left(1,\frac{\Vert\pi_{k\tau}-\overline{R}_{k}^{\Delta}(\pi_{(k-1)\tau})\Vert}{(\epsilon'_{k+2})^{2}\epsilon{}_{k+1}^{4}}\right)\right)\,,\label{eq:oublie-erreur-locale-01}
\end{multline}
with the convention that a product over indexes in the null set is
equal to one. From Lemma \ref{lem:contraction}, we get
\begin{multline}
\E(\Vert\overline{R}_{n:k+1}^{\Delta}(\pi_{k\tau})-\overline{R}_{n:k+1}^{\Delta}(\overline{R}_{k}^{\Delta}(\pi_{(k-1)\tau}))\Vert)\leq\left(1-\frac{(\epsilon(L,\Delta)^{2}\epsilon'(L,\Delta))^{2}}{2}\right)^{\left\lceil \frac{1}{2}\left(\lfloor\frac{n-k}{2}\rfloor-4\right)_{+}\right\rceil }\\
\times2\E\left(\inf\left(1,\frac{\Vert\pi_{k\tau}-\overline{R}_{k}^{\Delta}(\pi_{(k-1)\tau})\Vert}{(\epsilon'_{k+2})^{2}\epsilon^{4}{}_{k+1}}\right)\right)\,.\label{eq:oubli-erreur-locale-02}
\end{multline}
As in \cite{oudjane-rubenthaler-2005}, p. 434, we can bound
\begin{equation}
\inf\left(1,\frac{\Vert\pi_{n\tau}-\overline{R}_{k}^{\Delta}(\pi_{(k-1)\tau})\Vert}{(\epsilon'_{k+2})^{2}\epsilon_{k+1}^{4}}\right)\leq\inf\left(1,\frac{T(\Delta)}{(\epsilon'_{k+2})^{4}\epsilon{}_{k+1}^{8}}\right)+\inf\left(1,\frac{\Vert\pi_{n\tau}-\overline{R}_{k}^{\Delta}(\pi_{(k-1)\tau})\Vert^{2}}{T(\Delta)}\right)\,.\label{eq:inf-split}
\end{equation}

We have, if $\Delta$ satisfies the assumption of Proposition \ref{prop:erreur-locale},
\begin{multline}
\E\left(\inf\left(1,\frac{\Vert\pi_{k\tau}-\overline{R}_{k}^{\Delta}(\pi_{(k-1)\tau})\Vert^{2}}{T(\Delta)}\right)\right)\\
=\E\left(\frac{\Vert\pi_{k\tau}-\overline{R}_{k}^{\Delta}(\pi_{(k-1)\tau})\Vert^{2}}{T(\Delta)}\1_{[0,1]}\left(\frac{\Vert\pi_{k\tau}-\overline{R}_{k}^{\Delta}(\pi_{(k-1)\tau})\Vert^{2}}{T(\Delta)}\right)\right)\\
+\p\left(\frac{\Vert\pi_{k\tau}-\overline{R}_{k}^{\Delta}(\pi_{(k-1)\tau})\Vert^{2}}{T(\Delta)}>1\right)\\
\leq\frac{2}{\sqrt{T(\Delta)}}\E(\Vert\pi_{k\tau}-\overline{R}_{k}^{\Delta}(\pi_{(k-1)\tau})\Vert)\\
\mbox{(using Prop. \ref{prop:erreur-locale})}\preceq\sqrt{T(\Delta)}\,.\label{eq:inf-00}
\end{multline}

We look now at the term $\inf(1,T(\Delta)(\epsilon'_{k+2})^{-4}\epsilon_{k+1}^{-8})$.
Using Equations (\ref{eq:borne-theta}), (\ref{eq:def-d-Delta}),
(\ref{eq:def-T-Delta}), (\ref{eq:dl-sigma12}), (\ref{eq:dl-sigma22}),
(\ref{eq:lim-Cx}), (\ref{eq:lim-Cz}) and the remarks below Equation
(\ref{eq:borne-theta}), we have, for all $k$, 
\begin{multline}
T(\Delta)\underset{\Delta,c}{\preceq}\frac{h^{-1+\iota}\tau^{\iota-\frac{1}{2}}}{\Delta}\exp\left(-\frac{1}{2}\left(\frac{\tau^{\frac{1}{2}-\iota}h^{1-\iota}\Delta}{12C\sqrt{2}}\right)^{2}\right)\\
+\left(M+\sqrt{h}\right)\frac{he^{26M^{2}\tau+\frac{7\tau M}{2}+40M^{2}/h}}{\Delta\theta^{3/2}}\exp\left(-\frac{1}{4B_{2}}d(\Delta)^{2}\right)C_{1}'(h,\tau)+e^{-C_{0}\Delta^{2}}\,,\label{eq:maj-T-Delta}
\end{multline}
and using Equations (\ref{eq:def-xi_1}), (\ref{eq:def-xi_2}), (\ref{eq:def-epsilon}), 

\begin{multline}
\epsilon(D,\Delta)^{-1}=\exp\left(\frac{\left(D+\frac{\Delta}{B_{2}(1+p_{2,1})}\right)^{2}}{2\tau}-\frac{\left(\left(D-\frac{\Delta}{B_{2}(1+p_{2,1})}\right)_{+}\right)^{2}}{2\tau}\right)e^{2M\left(D+\frac{\Delta}{B_{2}(1+p_{2,1})}\right)+\left(\tau+\frac{\tau^{2}}{2}\right)M}\\
=\begin{cases}
\exp\left(\frac{2D\Delta}{\tau B_{2}(1+p_{2,1})}+2M\left(D+\frac{\Delta}{B_{2}(1+p_{2,1})}\right)+\left(\tau+\frac{\tau^{2}}{2}\right)M\right) & \mbox{ if }D\geq\frac{\Delta}{B_{2}(1+p_{2,1})}\,,\\
\exp\left(\frac{\left(D+\frac{\Delta}{B_{2}(1+p_{2,1})}\right)^{2}}{2\tau}+2M\left(D+\frac{\Delta}{B_{2}(1+p_{2,1})}\right)+\left(\tau+\frac{\tau^{2}}{2}\right)M\right) & \mbox{ otherwise .}
\end{cases}\label{eq:re-def-epsilon}
\end{multline}
and using Equation (\ref{eq:def-epsilon-prime}),
\begin{multline}
(\epsilon'(D,\Delta))^{-2}=\exp\left[\left(B_{2}p_{2,1}^{2}+\frac{1}{2\tau}\right)\left(\frac{\Delta}{B_{2}(1+p_{2,1})}+D\right)^{2}\right.\\
\left.+\left(2\Delta p_{2,1}+6M\right)\left(\frac{\Delta}{B_{2}(1+p_{2,1})}+D\right)+3\tau\left(M+\frac{M^{2}}{2}\right)\right]\,.\label{eq:re-def-epsilon-prime}
\end{multline}
We note that the above expressions are nondecreasing functions of
$D$. From Equation (\ref{eq:variation-m_k}), we get, for $j=k+1,\,k+2$.
\begin{equation}
D_{j}\leq C(\tau M+\mathcal{V}_{(k-1)\tau,(k+2)\tau}+2\mathcal{W}_{(k-1)\tau,(k+2)\tau})\,.\label{eq:maj-D_k}
\end{equation}
The variables $\mathcal{V}_{(k-1)\tau,(k+2)\tau}$ and $\mathcal{\mathcal{W}}_{(k-1)\tau,(k+2)\tau}$
are independent and can be controlled as in Equations (\ref{eq:maj-Vcal}),
(\ref{eq:maj-Wcal}). So we can bound
\[
\forall x\in\R\,,\,\p(\mathcal{V}_{(k-1)\tau,(k+2)\tau}+2\mathcal{W}_{(k-1)\tau,(k+2)\tau}\geq x)\leq2\p(2\mathcal{V}_{0,3\tau}\geq\frac{x}{2})\leq4\p(8|W_{3\tau}|\geq x)\,.
\]
So, by Lemma \ref{lem:comparaison-queues},
\begin{multline}
\E\left(\inf\left(1,\frac{T(\Delta)}{(\epsilon'_{k+2})^{4}\epsilon_{k+1}^{8}}\right)\right)\leq\int_{0}^{+\infty}\inf\left\{ 1,T(\Delta)\epsilon(C\tau M+Cz,\Delta)^{-8}\right.\\
\left.\times\epsilon'(C\tau M+Cz,\Delta)^{-4}\right\} \frac{8\exp\left(-\frac{z^{2}}{2(192\tau)}\right)}{\sqrt{2\pi\times192\tau}}dz\,.\label{eq:borne-T-01}
\end{multline}

We have %
{} 
\begin{multline}
\int_{0}^{\left(\frac{\Delta}{CB_{2}(1+p_{2,1})}-\tau M\right)_{+}}\epsilon(C\tau M+Cz,\Delta)^{-8}\epsilon'(C\tau M+Cz,\Delta)^{-4}\frac{8\exp\left(-\frac{z^{2}}{2(192\tau)}\right)}{\sqrt{2\pi\times192\tau}}dz\\
\leq8\epsilon\left(\frac{\Delta}{B_{2}(1+p_{2,1})},\Delta\right)^{-8}\epsilon'\left(\frac{\Delta}{B_{2}(1+p_{2,1})},\Delta\right)^{-4}\\
=8\exp\left[\frac{20}{\tau}\left(\frac{\Delta}{B_{2}(1+p_{2,1})}\right)^{2}+32\left(\frac{M\Delta}{B_{2}(1+p_{2,1})}\right)+8\left(\tau+\frac{\tau^{2}}{2}\right)M+8B_{2}p_{2,1}^{2}\left(\frac{\Delta}{B_{2}(1+p_{2,1})}\right)^{2}\right.\\
\left.+8\Delta p_{2,1}\left(\frac{\Delta}{B_{2}(1+p_{2,1})}\right)+24M\left(\frac{\Delta}{B_{2}(1+p_{2,1})}\right)+6\tau\left(M+\frac{M^{2}}{2}\right)\right]\,.\label{eq:borne-sur-inf-integrale}
\end{multline}
From Subsection \ref{subsec:Asymptotics} (remember also Equation
(\ref{eq:def-p})), we get 
\begin{equation}
p_{2,1}=O\left(\frac{1}{\theta}\right)\,,\,B_{2}\underset{\theta\rightarrow+\infty}{\longrightarrow}\frac{h}{2}\,.\label{eq:asymp-theta}
\end{equation}
So there exists $\tau_{0}$, such that, for $\tau\geq\tau_{0}$, 
\begin{multline}
\log\left(\int_{0}^{\left(\frac{\Delta}{CB_{2}(1+p_{2,1})}-\tau M\right)_{+}}\inf\{1,T(\Delta)\epsilon(C\tau M+Cz,\Delta)^{-8}\times\epsilon'(C\tau M+Cz,\Delta)^{-4}\}\right.\\
\left.\frac{8\exp\left(-\frac{z^{2}}{2(192\tau)}\right)}{\sqrt{2\pi\times192\tau}}dz\right)\preDc-\inf\left(\frac{1}{h},C_{0}\right)\Delta^{2}\,.\label{eq:maj-log-01}
\end{multline}

We now want to bound 
\[
\int_{\left(\frac{\Delta}{CB_{2}(1+p_{2,1})}-\tau M\right)_{+}}^{+\infty}\inf\{1,T(\Delta)\epsilon(C\tau M+Cz,\Delta)^{-8}\times\epsilon'(C\tau M+Cz,\Delta)^{-4}\}\frac{8\exp\left(-\frac{z^{2}}{2(192\tau)}\right)}{\sqrt{2\pi\times192\tau}}dz\,.
\]
Let us set, for $z\geq\left(\frac{\Delta}{CB_{2}(1+p_{2,1})}-\tau M\right)_{+}$,
\begin{multline*}
\Phi(\Delta,z)=T(\Delta)\epsilon(C\tau M+Cz,\Delta)^{-8}\times\epsilon'(C\tau M+Cz,\Delta)^{-4}\\
=T(\Delta)\times\exp\left(\frac{16(C\tau M+Cz)\Delta}{\tau B_{2}(1+p_{2,1})}+16M\left(C\tau M+Cz+\frac{\Delta}{B_{2}(1+p_{2,1})}\right)+8\left(\tau+\frac{\tau^{2}}{2}\right)M\right)\\
\times\exp\left[(2B_{2}p_{2,1}^{2}+\frac{1}{\tau})\left(\frac{\Delta}{B_{2}(1+p_{2,1})}+C\tau M+Cz\right)^{2}\right.\\
\left.+(4\Delta p_{2,1}+12M)\left(\frac{\Delta}{B_{2}(1+p_{2,1})}+C\tau M+Cz\right)+6\tau\left(M+\frac{M^{2}}{2}\right)\right]\,.
\end{multline*}
For $z\leq\left(\frac{\Delta}{CB_{2}(1+p_{2,1})}-\tau M\right)_{+}$,
we define $\Phi(\Delta,z)$ by 
\begin{multline*}
\Phi(\Delta,z)=\\
T(\Delta)\times\exp\left(\frac{16(C\tau M+Cz)\Delta}{\tau B_{2}(1+p_{2,1})}+16M\left(C\tau M+Cz+\frac{\Delta}{B_{2}(1+p_{2,1})}\right)+8\left(\tau+\frac{\tau^{2}}{2}\right)M\right)\\
\times\exp\left[(2B_{2}p_{2,1}^{2}+\frac{1}{\tau})\left(\frac{\Delta}{B_{2}(1+p_{2,1})}+C\tau M+Cz\right)^{2}\right.\\
\left.+(4\Delta p_{2,1}+12M)\left(\frac{\Delta}{B_{2}(1+p_{2,1})}+C\tau M+Cz\right)+6\tau\left(M+\frac{M^{2}}{2}\right)\right]\,.
\end{multline*}
For a constant $\overline{C}$ bigger than $C$, we define $\overline{\Phi}(\Delta,z)$
to have the same expression as $\Phi$, except that we replace $C$
by $\overline{C}$. We choose a 
\begin{equation}
\overline{C}=\sup\left(C,\frac{1}{\sqrt{2\times192}}\right)\,,\label{eq:dec-C-barre}
\end{equation}
 so that $z\mapsto\overline{\Phi}(\Delta,z)\times\exp(-z^{2}/(2\times192\tau))$
is nondecreasing in $z$. We have $\overline{\Phi}(\Delta,z)\rightarrow\infty$
when $z\rightarrow+\infty$ and $\Delta$ is fixed. Let us set, for
a fixed $\Delta$, 
\[
z_{0}=\inf\{z\,:\,\overline{\Phi}(\Delta,z)\geq1\}\,.
\]
There exists $\lambda_{1}>0$ and $\tau_{1}>0$ such that for all
$\tau\geq\tau_{1}$,
\[
T(\Delta)\epsilon(\overline{C}\tau M+\overline{C}\lambda_{1}\sqrt{\tau}\Delta,\Delta)^{-8}\epsilon'(\overline{C}\tau M+\overline{C}\lambda_{1}\sqrt{\tau}\Delta,\Delta)^{-4}\underset{\Delta\rightarrow+\infty}{\longrightarrow}0\,.
\]
Looking at the definition of $p_{2,1}$ (Equation (\ref{eq:def-p}))
and at Equations (\ref{eq:lim-Cxz}), (\ref{eq:lim-Cz}), we see that
$\lambda_{1}$ can be chosen as a function of $h$, which we denote
by $\lambda_{1}(h)$. And there exists $\lambda_{2}>0$ and $\tau_{2}>0$
such that for all $\tau\geq\tau_{2}$,
\[
T(\Delta)\epsilon(\overline{C}\tau M+\overline{C}\lambda_{2}\sqrt{\tau}\Delta,\Delta)^{-8}\epsilon'(\overline{C}\tau M+\overline{C}\lambda_{2}\sqrt{\tau}\Delta,\Delta)^{-4}\underset{\Delta\rightarrow+\infty}{\longrightarrow}+\infty\,.
\]
So there exists $\Delta_{\text{1 }}$ such that, for $\Delta$ bigger
than $\Delta_{1}$ and $\tau$ bigger than $\sup(\tau_{1},\tau_{2})$,
\[
\lambda_{1}\sqrt{\tau}\Delta\leq z_{0}\leq\lambda_{2}\sqrt{\tau}\Delta\,.
\]
 We can then bound, if $\tau\geq\sup(\tau_{1},\tau_{2})$ ,
\begin{eqnarray*}
\int_{0}^{z_{0}}\inf(1,\Phi(\Delta,z))\frac{8\exp\left(-\frac{z^{2}}{2(192\tau)}\right)}{\sqrt{2\pi(192\tau)}}dz & \leq & \int_{0}^{z_{0}}\inf(1,\overline{\Phi}(\Delta,z))\frac{8\exp\left(-\frac{z^{2}}{2(192\tau)}\right)}{\sqrt{2\pi(192\tau)}}dz\\
 & \leq & z_{0}\overline{\Phi}(\Delta,z_{0})\frac{8\exp\left(-\frac{z_{0}^{2}}{2(192\tau)}\right)}{\sqrt{2\pi(192\tau)}}\\
 & = & z_{0}\frac{8\exp\left(-\frac{z_{0}^{2}}{2(192\tau)}\right)}{\sqrt{2\pi(192\tau)}}\\
 & \leq & \lambda_{2}\sqrt{\tau}\Delta\frac{8\exp\left(-\frac{\lambda_{1}^{2}\Delta^{2}}{2\times192}\right)}{\sqrt{2\pi(192\tau)}}\,.
\end{eqnarray*}

{} So, if $\tau\geq\sup(\tau_{0},\tau_{1},\tau_{2}),$ we get, using
again Equation (\ref{eq:queue-gaussienne})

\begin{multline}
\int_{\left(\frac{\Delta}{CB_{2}(1+p_{2,1})}-\tau^{2}M\right)_{+}}^{+\infty}\inf\{1,T(\Delta)\epsilon(C\tau^{2}M+Cz,\Delta)^{-8}\times\epsilon'(C\tau^{2}M+Cz,\Delta)^{-4}\}\\
\times\frac{8\exp\left(-\frac{z^{2}}{2(192\tau)}\right)}{\sqrt{2\pi\times192\tau}}dz\\
\preceq\int_{0}^{z_{0}}\Phi(\Delta,z)\frac{e^{-\frac{z^{2}}{2(192\tau)}}}{\sqrt{2\pi\times192\tau}}dz+\int_{z_{0}}^{+\infty}\frac{e^{-\frac{z^{2}}{2(192\tau)}}}{\sqrt{2\pi\times192\tau}}dz\\
\leq\lambda_{2}\sqrt{\tau}\Delta\frac{8\exp\left(-\frac{\lambda_{1}^{2}\Delta^{2}}{384}\right)}{\sqrt{2\pi(192\tau)}}+\frac{e^{-\frac{z_{0}^{2}}{2(192\tau)}}}{z_{0}\sqrt{2\pi}}\times\sqrt{192\tau},\label{eq:borne-log-02}
\end{multline}
and so
\begin{multline}
\log\left(\int_{-\infty}^{+\infty}\inf(1,T(\Delta)\epsilon(C\tau M+Cz,\Delta)^{-8}\times\epsilon'(C\tau M+Cz,\Delta)^{-4})\frac{e^{-\frac{z^{2}}{2(192\tau)}}}{\sqrt{2\pi\times192\tau}}dz\right)\\
\preDc-\Delta^{2}\inf\left(\frac{1}{h},C_{0},\lambda_{1}^{2}\right)\,.\label{eq:borne-log-03}
\end{multline}

In the remaining of the proof, we will suppose $\tau\geq\sup(\tau_{0},\tau_{1},\tau_{2})$.
Looking at Equation (\ref{eq:somme-telescopique-01}), we see that
we can now bound all the terms on its right-hand side. We have
\[
\E(\Vert\pi_{n\tau}-\overline{R}_{n}^{\Delta}(\pi_{(n-1)\tau})\Vert)\preceq T(\Delta)\,,
\]
by Proposition \ref{prop:erreur-locale}. Recall, that, from Equations
(\ref{eq:maj-T-Delta}), (\ref{eq:def-d-Delta}), we get
\begin{equation}
\log(T(\Delta))\preDc-\Delta^{2}\inf\left(\frac{1}{h},C_{0}\right)\,.\label{eq:maj-log-T}
\end{equation}
 For $k$ in $\{1,\dots,n-1\}$, we have bounded
\begin{multline*}
\E(\Vert\overline{R}_{n:k+1}^{\Delta}(\pi_{k\Delta})-\overline{R}_{n:k+1}^{\Delta}(\overline{R}_{k}^{\Delta}(\pi_{(k-1)\Delta})\Vert)\\
\leq\left(1-\frac{\epsilon(L,\Delta)^{2}\epsilon'(L,\Delta)^{2}}{2}\right)^{\left\lceil \frac{1}{2}\left(\lfloor\frac{n-k}{2}\rfloor-4\right)_{+}\right\rceil }\times2\E\left(\inf\left(1,\frac{\Vert\pi_{k\tau}-\overline{R}_{k}^{\Delta}(\pi_{(k-1)\tau})\Vert}{(\epsilon'_{k+2})^{2}\epsilon_{k+1}^{4}}\right)\right)\,.
\end{multline*}
And the last expectation can be bounded by the sum of the following
expectations~: 
\[
\E\left(\inf\left(1,\frac{\Vert\pi_{k\tau}-\overline{R}_{k}^{\Delta}(\pi_{(k-1)\tau})\Vert^{2}}{T(\Delta)}\right)\right)\preceq\sqrt{T(\Delta)}\,,
\]
\[
\E\left(\inf\left(1,\frac{T(\Delta)}{(\epsilon'_{k+2})^{4}\epsilon_{k+1}^{8}}\right)\right)\leq\exp\left(-\widehat{B}_{1}\Delta^{2}\inf\left(\frac{1}{h},C_{0},\lambda_{1}^{2}\right)\right)\mbox{ for }\Delta\geq\Delta_{0}(\tau)\,,
\]
for some constant $\widehat{B}_{1}$ and some function $\Delta_{0}$,
where the bounds come from Equations (\ref{eq:inf-00}), (\ref{eq:borne-T-01}),%
{} (\ref{eq:borne-log-03}) (we also use Lemma \ref{lem:proprietes-preceq}).
The constant $\widehat{B}_{1}$ above is universal and $\Delta_{0}$
is continuous in $\tau$. So we get, for all $\Delta\geq\Delta_{0}(\tau)$,
using Equation (\ref{eq:maj-log-T}), 
\[
\E(\Vert\pi_{n\tau}-\pi_{n\tau}^{\Delta}\Vert)\leq\exp\left(-\widehat{C}_{1}\Delta^{2}\inf\left(\frac{1}{h},C_{0},\lambda_{1}^{2}\right)\right)\sum_{k\geq0}\left(1-\frac{\epsilon(L,\Delta)^{4}\epsilon'(L,\Delta)^{2}}{2}\right)^{\left\lceil \frac{1}{2}\left(\lfloor\frac{n-k}{2}\rfloor-4\right)_{+}\right\rceil }\,,
\]
(for some universal constant $\widehat{C}_{1}$) from which we get
\begin{eqnarray*}
\sup_{n\geq0}\log\E(\Vert\pi_{n\tau}-\pi_{n\tau}^{\Delta}\Vert) & \preDc & -\log(\epsilon(L,\Delta)\epsilon'(L,\Delta))-\Delta^{2}\inf\left(\frac{1}{h},C_{0},\lambda_{1}^{2}\right)\,.
\end{eqnarray*}
 Looking at Equations (\ref{eq:re-def-epsilon}), (\ref{eq:re-def-epsilon-prime}),
we see there exists $\tau_{3}$, such that, for $\tau>\tau_{3}$,
\[
\sup_{n\geq0}\log\E(\Vert\pi_{n\tau}-\pi_{n\tau}^{\Delta}\Vert)\preDc-\Delta^{2}\inf\left(\frac{1}{h},C_{0},\lambda_{1}^{2}\right)\,.
\]
\end{proof}

\subsection{Stability of the optimal filter}

\begin{proof}[Proof of Theorem \ref{thm:stabilite}] We decompose,
for all $n$,
\begin{multline*}
\Vert\pi_{n\tau}-\pi'_{n\tau}\Vert\leq\Vert\pi_{n\tau}-\overline{R}_{n}^{\Delta}\dots\overline{R}_{1}^{\Delta}(\pi_{0})\Vert+\Vert\overline{R}_{n}^{\Delta}\dots\overline{R}_{1}^{\Delta}(\pi_{0})-\overline{R}_{n}^{\Delta}\dots\overline{R}_{1}^{\Delta}(\pi_{0}')\Vert\\
+\Vert\overline{R}_{n}^{\Delta}\dots\overline{R}_{1}^{\Delta}(\pi_{0})-\pi'_{n\tau}\Vert\,.
\end{multline*}
Let $\tau_{\infty}$ be the parameter defined in Proposition \ref{prop:approx-par-filtre-robuste}.
Recall that the operators $(R_{n})_{n\geq0}$, $(R_{n}^{\Delta})_{n\geq0}$
depend on $\tau$. %
Suppose that $L$ is such that (as in Equation (\ref{eq:cond-L}))
\[
L>3|m_{0}|+3CM\tau_{\infty}\,,\,\widetilde{\alpha}(L)\leq\frac{1}{4}\,.
\]
Then, as in Equation (\ref{eq:oubli-erreur-locale-02}), we have,
for all $\tau\in[\tau_{\infty},2\tau_{\infty}]$, for all $n\geq0$,
\begin{multline*}
\E(\Vert\overline{R}_{n}^{\Delta}\dots\overline{R}_{1}^{\Delta}(\pi_{0})-\overline{R}_{n}^{\Delta}\dots\overline{R}_{1}^{\Delta}(\pi_{0}')\Vert)\\
\leq\left(1-\frac{\epsilon(L,\Delta)^{2}\epsilon'(L,\Delta)^{2}}{2}\right)^{\left\lceil \frac{1}{2}\left(\lfloor\frac{n}{2}\rfloor-4\right)_{+}\right\rceil }\times2\E\left(\inf\left(1,\frac{\Vert\pi_{0}-\pi_{0}'\Vert}{(\epsilon'_{2})^{2}\epsilon_{1}^{4}}\right)\right)\\
\leq2\left(1-\frac{\epsilon(L,\Delta)^{2}\epsilon'(L,\Delta)^{2}}{2}\right)^{\left\lceil \frac{1}{2}\left(\lfloor\frac{n}{2}\rfloor-4\right)_{+}\right\rceil }\,.
\end{multline*}
We have (using Equations (\ref{eq:re-def-epsilon}), (\ref{eq:re-def-epsilon-prime}))
\[
\log(\epsilon(L,\Delta)\epsilon'(L,\Delta))\underset{\Delta,c}{\succeq}-\left(\text{\ensuremath{\frac{\Delta^{2}}{\tau B_{2}^{2}(1+p_{2,1})^{2}}}+\ensuremath{\frac{\left(B_{2}p_{2,1}^{2}+\frac{1}{\tau}\right)\Delta^{2}}{B_{2}^{2}(1+p_{2,1})^{2}}}+\ensuremath{\frac{p_{2,1}\Delta^{2}}{B_{2}(1+p_{2,1})}}}\right)\,.
\]
We now take a sequence $\Delta_{n}=\sqrt{\nu\log(n)}$, for some $\nu>0$.
By Lemma \ref{lem:proprietes-preceq}, \ref{enu:lemme-technique-iii},
there exist a constants $b_{1}$ and an integer $n_{0}$ such that,
for all $\tau\in[\tau_{\infty},2\tau_{\infty}]$, for $n>n_{0}$,
\begin{multline}
\left(1-\frac{(\epsilon(L,\Delta_{n})\epsilon'(L,\Delta_{n}))^{2}}{2}\right)^{\left\lceil \frac{1}{2}\left(\lfloor\frac{n}{2}\rfloor-4\right)_{+}\right\rceil }\\
\leq\exp\left[-\frac{1}{2}\left\lceil \frac{1}{2}\left(\lfloor\frac{n}{2}\rfloor-4\right)_{+}\right\rceil \right.\\
\left.\times\exp\left(-b_{1}\Delta_{n}^{2}\left(\frac{1}{\tau B_{2}^{2}(1+p_{2,1})^{2}}+\frac{B_{2}p_{2,1}^{2}+\frac{1}{\tau}}{B_{2}^{2}(1+p_{2,1})^{2}}+\frac{p_{2,1}}{B_{2}(1+p_{2,1})}\right)\right)\right]\\
=\exp\left[-\frac{1}{2}\left\lceil \frac{1}{2}\left(\lfloor\frac{n}{2}\rfloor-4\right)_{+}\right\rceil n^{-\nu'}\right]\,,\label{eq:terme-01}
\end{multline}
with 
\[
\nu'=b_{1}\nu\left(\frac{1}{\tau B_{2}^{2}(1+p_{2,1})^{2}}+\frac{B_{2}p_{2,1}^{2}+\frac{1}{\tau}}{B_{2}^{2}(1+p_{2,1})^{2}}+\frac{p_{2,1}}{B_{2}(1+p_{2,1})}\right)\,.
\]
 By Proposition \ref{prop:approx-par-filtre-robuste}, we know there
exists a constants $b_{1}'$ and a integer $n_{0}'$ such that, for
all $\tau\in[\tau_{\infty},2\tau_{\infty}]$ and $n\geq n_{0}'$,
\begin{multline*}
\sup(\E(\Vert\pi_{n\tau}-\overline{R}_{n}^{\Delta_{n}}\dots\overline{R}_{1}^{\Delta_{n}}(\pi_{0})\Vert),\E(\Vert\pi_{n\tau}'-\overline{R}_{n}^{\Delta_{n}}\dots\overline{R}_{1}^{\Delta_{n}}(\pi_{0}')\Vert))\\
\leq\exp\left(-b_{1}'\Delta_{n}^{2}\lambda_{1}'(h)\right)\leq n^{-\nu''}\,,
\end{multline*}
with $\nu''=b'_{1}\nu\lambda_{1}'(h)$. Let us set $\epsilon\in(0,1)$.
We choose 
\[
\nu=\frac{(1-\epsilon)}{b_{1}}\left(\frac{1}{\tau B_{2}^{2}(1+p_{2,1})^{2}}+\frac{B_{2}p_{2,1}^{2}+\frac{1}{\tau}}{B_{2}^{2}(1+p_{2,1})^{2}}+\frac{p_{2,1}}{B_{2}(1+p_{2,1})}\right)^{-1}\,,
\]
which leads to $\nu'=1-\epsilon$. We set $\nu_{0}=\nu''$. For any
$t\geq\tau_{\infty}$, if we set $n=\lfloor t/\tau_{\infty}\rfloor$,
then $t=n\tau$ with $\tau\in[\tau_{\infty},2\tau_{\infty}]$, and
so~:
\[
\E(\Vert\pi_{t}-\pi'_{t}\Vert)\leq2n^{-\nu_{0}}+2\exp\left(-\frac{1}{2}\left\lceil \frac{1}{2}\left(\lfloor\frac{n-k}{2}\rfloor-4\right)_{+}\right\rceil n^{-\nu'}\right)\,,
\]
\[
\E(\Vert\pi_{t}-\pi'_{t}\Vert)=O(t^{-\nu_{0}})\,.
\]
\end{proof} 
\begin{rem}
One could seek to obtain a sharper bound in the above Theorem by choosing
another sequence $(\Delta_{n})_{n\geq0}$. Up to some logarithmic
terms, the bound would still be a power of $t$. 
\end{rem}

\section{Appendix}

\subsection{\label{subsec:Proofs-of-Section-1}Proofs of Section \ref{sec:Introduction}}

\begin{proof}[Proof of Lemma \ref{lem:encadrement-transition}] Following~\cite{bain-crisan-2009}
(Chapter 6, Section 6.1), we introduce the process 
\[
\widehat{V}_{t}=V_{t}+\int_{0}^{t}f(X_{s})ds\,,\,t\geq0\,.
\]
We introduce a new probability $\widetilde{\p}$ defined by
\begin{equation}
\left.\frac{d\p}{d\widetilde{\p}}\right|_{\mathcal{F}_{t}}=\exp\left(\int_{0}^{t}f(X_{s})d\widehat{V}_{s}-\frac{1}{2}\int_{0}^{t}f(X_{s})^{2}ds\right)\,.\label{eq:def-Ptilde}
\end{equation}
By Girsanov's theorem, $\widehat{V}$ is a standard Brownian motion
under $\widetilde{\p}$. We set $F$ to be a primitive of $f$. We
have, for all $t\geq0$, 
\begin{eqnarray*}
\int_{0}^{t}f(X_{s})d\widehat{V}_{s}-\frac{1}{2}\int_{0}^{t}f(X_{s})^{2}ds & = & \int_{0}^{t}f(X_{s})dX_{s}-\frac{1}{2}\int_{0}^{t}f(X_{s})^{2}ds\\
 & = & F(X_{t})-F(X_{0})-\frac{1}{2}\int_{0}^{t}f'(X_{s})ds-\frac{1}{2}\int_{0}^{t}f(X_{s})^{2}ds\\
 & \geq & -M|X_{t}-X_{0}|-\frac{Mt}{2}-\frac{M^{2}t}{2}\,.
\end{eqnarray*}
So, for any test function $\varphi$ in $\mathcal{C}_{b}^{+}(\R)$
(the set of bounded continuous functions on $\R$), $t\geq0$ 
\begin{eqnarray*}
\E(\varphi(X_{t})) & = & \E^{\p}(\varphi(X_{t}))\\
 & = & \E^{\widetilde{\p}}\left(\varphi(X_{t})\left.\frac{d\p}{d\widetilde{\p}}\right|_{\mathcal{F}_{t}}\right)\\
 & \geq & \E^{\widetilde{\p}}\left(\varphi(X_{t})\exp\left(-M|X_{t}-X_{0}|-\frac{Mt}{2}-\frac{M^{2}t}{2}\right)\right)\,.
\end{eqnarray*}
Similarly:
\[
\E(\varphi(X_{t}))\leq\E^{\widetilde{\p}}\left(\varphi(X_{t})\exp\left(M|X_{t}-X_{0}|+\frac{Mt}{2}\right)\right)
\]
So we have the result.\end{proof}

\begin{proof}[Proof of Lemma \ref{lem:encadrement-potentiel}]For
any test function $\varphi$ in $\mathcal{C}_{b}^{+}([0,t])$ and
any $t\geq0$, 
\begin{eqnarray*}
\E^{\p}(\varphi(Y_{0:t})|X_{0},X_{t}) & = & \frac{\E^{\widehat{\p}}\left(\left.\varphi(Y_{0:t})\left.\frac{d\p}{d\widehat{\p}}\right|_{\mathcal{F}_{t}}\right|X_{0},X_{t}\right)}{\E^{\widehat{\p}}\left(\left.\left.\frac{d\p}{d\widehat{\p}}\right|_{\mathcal{F}_{t}}\right|X_{0},X_{t}\right)}\\
 & = & \frac{\E^{\widehat{\p}}\left(\left.\varphi(Y_{0:t})\E^{\widehat{\p}}\left(\left.\left.\frac{d\p}{d\widehat{\p}}\right|_{\mathcal{F}_{t}}\right|X_{0},X_{t},Y_{0:t}\right)\right|X_{0},X_{t}\right)}{\E^{\widehat{\p}}\left(\left.\left.\frac{d\p}{d\widehat{\p}}\right|_{\mathcal{F}_{t}}\right|X_{0},X_{t}\right)}\,.
\end{eqnarray*}
By Girsanov's Theorem, $(\widehat{V},Y)$ is a standard two-dimensional
Brownian motion under $\widehat{\p}$. So, conditionally on $X_{0}$,
$X_{t}$, the law of $Y_{0:t}$ under $\p$ has the following density
with respect to the Wiener measure:
\[
y_{0:t}\mapsto\psi_{t}(Y_{0:t},X_{0},X_{t})=\frac{\E^{\widehat{\p}}\left(\left.\left.\frac{d\p}{d\widehat{\p}}\right|_{\mathcal{F}_{t}}\right|X_{0},X_{t},Y_{0:t}\right)}{\E^{\widehat{\p}}\left(\left.\left.\frac{d\p}{d\widehat{\p}}\right|_{\mathcal{F}_{t}}\right|X_{0},X_{t}\right)}
\]
 We have
\begin{multline}
\left.\frac{d\p}{d\widehat{\p}}\right|_{\mathcal{F}_{t}}=\exp\left(F(X_{1})-F(X_{0})-\frac{1}{2}\int_{0}^{t}f'(X_{s})ds-\frac{1}{2}\int_{0}^{1}f(X_{s})^{2}ds\right.\\
\left.+\int_{0}^{t}hX_{s}dY_{s}-\frac{1}{2}\int_{0}^{t}h^{2}X_{s}^{2}ds\right)\\
\E^{\widehat{\p}}\left(\left.\left.\frac{d\p}{d\widehat{\p}}\right|_{\mathcal{F}_{t}}\right|X_{0},X_{t}\right)=\E^{\widehat{\p}}\left(\left.\exp\left(F(X_{1})-F(X_{0})-\frac{1}{2}\int_{0}^{t}f'(X_{s})ds-\frac{1}{2}\int_{0}^{t}f(X_{s})^{2}ds\right)\right|X_{0},X_{1}\right)\,,\label{eq:dP/dPchapeau}
\end{multline}
so
\[
\exp\left(-M|X_{t}-X_{0}|-\frac{t(M+M^{2})}{2}\right)\leq\E^{\widehat{\p}}\left(\left.\left.\frac{d\p}{d\widehat{\p}}\right|_{\mathcal{F}_{t}}\right|X_{0},X_{t}\right)\leq\exp\left(M|X_{t}-X_{0}|+\frac{tM}{2}\right)\,.
\]
Using the above calculations (Equation (\ref{eq:dP/dPchapeau})),
we can write:
\begin{multline*}
\exp\left(-M|X_{t}-X_{0}|-\frac{t(M+M^{2})}{2}\right)\widehat{\psi}_{t}(Y_{0:t},x_{0},x_{1})\leq\\
\E^{\widehat{\p}}\left(\left.\left.\frac{d\p}{d\widehat{\p}}\right|_{\mathcal{F}_{t}}\right|X_{0},X_{t},Y_{0:t}\right)\leq\exp\left(M|X_{t}-X_{0}|+\frac{tM}{2}\right)\widehat{\psi}(Y_{0:t},x_{0},x_{1})\,.
\end{multline*}
So we have the result.\end{proof}

\begin{proof}[Proof of Lemma \ref{lem:kallianpur-striebel}]We define
a new probability $\check{\p}$ by 
\[
\left.\frac{d\p}{d\check{\p}}\right|_{\mathcal{F}_{t}}=\exp\left(\int_{0}^{t}hX_{s}dY_{s}-\frac{1}{2}\int_{0}^{t}h^{2}X_{s}^{2}ds\right)\,,\,\forall t\geq0\,.
\]
By Girsanov's Theorem, $(Y_{t})$ is a Brownian motion under $\check{\p}$.
For all bounded continuous $\varphi$ and all $t\geq0$, we have (Kallianpur-Striebel,
see \cite{bain-crisan-2009}, p.57)
\[
\E(\varphi(X_{t})|Y_{0:t})=\frac{\E^{\check{\p}}\left(\varphi(X_{t})\left.\frac{d\p}{d\check{\p}}\right|_{\mathcal{F}_{t}}|Y_{0:t}\right)}{\E^{\check{\p}}\left(\left.\frac{d\p}{d\check{\p}}\right||Y_{0:t}\right)}\,,
\]
and
\[
\E^{\check{\p}}\left(\varphi(X_{t})\left.\frac{d\p}{d\check{\p}}\right|_{\mathcal{F}_{t}}|Y_{0:t}\right)=\E^{\check{\p}}\left(\varphi(X_{t})\E^{\check{\p}}\left(\left.\frac{d\p}{d\check{\p}}\right|_{\mathcal{F}_{t}}|Y_{0:t},X_{0},X_{t}\right)|Y_{0:t}\right)\,,
\]
and
\begin{eqnarray*}
\E^{\check{\p}}\left(\left.\frac{d\p}{d\check{\p}}\right|_{\mathcal{F}_{t}}|Y_{0:t},X_{0},X_{t}\right) & = & \frac{\E^{\widehat{\p}}\left(\left.\frac{d\p}{d\check{\p}}\right|_{\mathcal{F}_{t}}\left.\frac{d\check{\p}}{d\widehat{\p}}\right|_{\mathcal{F}_{t}}|Y_{0:t},X_{0},X_{t}\right)}{\E^{\widehat{\p}}\left(\left.\frac{d\check{\p}}{d\widehat{\p}}\right|_{\mathcal{F}_{t}}|Y_{0:t},X_{0},X_{t}\right)}\\
 & = & \frac{\psi_{t}(Y_{0:t},X_{0},X_{t})\E^{\widehat{\p}}\left(\left.\frac{d\p}{d\widehat{\p}}\right|_{\mathcal{F}_{t}}|X_{0},X_{t}\right)}{\E^{\widehat{\p}}\left(\left.\frac{d\p}{d\widetilde{\p}}\right|_{\mathcal{F}_{t}}|Y_{0:t},X_{0},X_{t}\right)}\\
 & = & \psi_{t}(Y_{0:t},X_{0},X_{t})\times\frac{\E^{\widehat{\p}}\left(\left.\frac{d\p}{d\widetilde{\p}}\right|_{\mathcal{F}_{t}}|X_{0},X_{t}\right)}{\E^{\widehat{\p}}\left(\left.\frac{d\p}{d\widetilde{\p}}\right|_{\mathcal{F}_{t}}|X_{0},X_{t}\right)}\\
 & = & \psi_{t}(Y_{0:t},X_{0},X_{t})\,.
\end{eqnarray*}
As the law of $(X_{s})_{s\geq0}$ is the same under $\p$ or $\check{\p}$,
we get the desired result.\end{proof}

\subsection{Proofs of Section \ref{sec:Computation-of}\label{subsec:Proofs-of-Section-psi}}

We first prove two technical Lemmas.
\begin{lem}
\label{lem:representation-O-U}For any $t>0$, for any function $g:\R\mapsto\R$
which is measurable with respect to the Lebesgue measure and such
that $\int_{0}^{t}g(s)^{2}ds<\infty$, we have~:
\[
\int_{0}^{t}g(s)dB_{s}=\int_{0}^{t}\left(g(s)-\theta e^{\theta s}\int_{s}^{t}e^{-\theta u}g(u)du\right)d\beta_{s}\,.
\]
\end{lem}

\begin{proof}
Under $\q$, $B$ is an Ornstein-Uhlenbeck process (see Equation (\ref{eq:O-U})).
We can write $B$ as the strong solution of (\ref{eq:O-U}):
\begin{equation}
B_{t}=e^{-\theta t}\int_{0}^{t}e^{\theta s}d\beta_{s}\,,\,\forall t\geq0\,.\label{eq:representation-O-U}
\end{equation}
 We use the integration by parts  formula to compute:
\begin{multline*}
\int_{0}^{t}\left(g(s)-\theta e^{\theta s}\int_{s}^{t}e^{-\theta u}g(u)du\right)d\beta_{s}\\
=\int_{0}^{t}\left(g(s)-\theta e^{\theta s}\int_{0}^{t}e^{-\theta u}g(u)du\right)d\beta_{s}+\int_{0}^{t}\left(\theta e^{\theta s}\int_{0}^{s}e^{-\theta u}g(u)du\right)d\beta_{s}\\
=\int_{0}^{t}g(s)d\beta_{s}-\left(\int_{0}^{t}e^{-\theta u}g(u)du\right)\left(\int_{0}^{t}\theta e^{\theta s}d\beta_{s}\right)+\int_{0}^{t}\left(\theta e^{\theta s}\int_{0}^{s}e^{-\theta u}g(u)du\right)d\beta_{s}\\
=\int_{0}^{t}g(s)d\beta_{s}-\int_{0}^{t}e^{-\theta u}g(u)\left(\int_{0}^{u}\theta e^{\theta s}d\beta_{s}\right)du-\int_{0}^{t}\left(\int_{0}^{s}e^{-\theta u}g(u)du\right)\theta e^{\theta s}d\beta_{s}\\
+\int_{0}^{t}\left(\theta e^{\theta s}\int_{0}^{s}e^{-\theta u}g(u)du\right)d\beta_{s}\\
=\int_{0}^{t}g(s)d\beta_{s}-\int_{0}^{t}\theta g(u)B_{u}du=\int_{0}^{t}g(s)dB_{s}\,.
\end{multline*}
\end{proof}
\begin{lem}
\label{lem:integral-O-U}We have, for all~$s,t\geq0$,

{} 
\[
g(s)-\theta e^{\theta s}\int_{s}^{t}e^{-\theta u}g(u)du=\begin{cases}
e^{\theta(s-t)} & \mbox{ if }g(u)=1,\,\forall u,\\
\left(t+\frac{1}{\theta}\right)e^{\theta(s-t)}-\frac{1}{\theta} & \mbox{ if }g(u)=u,\,\forall u,\\
\left(t^{2}+\frac{2t}{\theta}+\frac{2}{\theta^{2}}\right)e^{\theta(s-t)}-\left(\frac{2s}{\theta}+\frac{2}{\theta^{2}}\right) & \mbox{ if }g(u)=u^{2},\,\forall u\,.
\end{cases}
\]
\end{lem}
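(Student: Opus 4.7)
The plan is to verify each of the three cases by direct computation: the right-hand side in each case is obtained by evaluating $\int_s^t e^{-\theta u}g(u)\,du$ explicitly through repeated integration by parts, multiplying by $\theta e^{\theta s}$, and subtracting the resulting expression from $g(s)$.

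For the constant case $g\equiv 1$, I would simply write $\int_s^t e^{-\theta u}\,du = \theta^{-1}(e^{-\theta s}-e^{-\theta t})$, so that $\theta e^{\theta s}\int_s^t e^{-\theta u}\,du = 1 - e^{\theta(s-t)}$, and subtracting from $g(s)=1$ gives the claimed $e^{\theta(s-t)}$. For $g(u)=u$, a single integration by parts with $f=u$, $dg=e^{-\theta u}du$ yields
\[
\int_s^t u e^{-\theta u}\,du = \left(\frac{s}{\theta}+\frac{1}{\theta^2}\right)e^{-\theta s} - \left(\frac{t}{\theta}+\frac{1}{\theta^2}\right)e^{-\theta t}\,;
\]
multiplying by $\theta e^{\theta s}$ and subtracting from $s$ collapses the $s$-term and produces the stated $(t+\theta^{-1})e^{\theta(s-t)}-\theta^{-1}$. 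For $g(u)=u^2$, one more round of integration by parts reduces to the previous case and gives
\[
\int_s^t u^2 e^{-\theta u}\,du = \left(\frac{s^2}{\theta}+\frac{2s}{\theta^2}+\frac{2}{\theta^3}\right)e^{-\theta s} - \left(\frac{t^2}{\theta}+\frac{2t}{\theta^2}+\frac{2}{\theta^3}\right)e^{-\theta t}\,,
\]
after which the same algebraic cancellation with $g(s)=s^2$ yields the third formula.

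There is no real obstacle; this is a routine but slightly bookkeeping-heavy calculation. The only thing to watch for is sign errors and making sure the constant terms produced by integration by parts at the lower limit $u=s$ cancel exactly against the polynomial $g(s)$, which is precisely what produces the clean form of the remainder $-\theta^{-1}$ (resp.\ $-(2s/\theta+2/\theta^{2})$). It is also worth remarking that one could alternatively derive all three identities in one stroke by noting that $v(s):=g(s)-\theta e^{\theta s}\int_s^t e^{-\theta u}g(u)\,du$ satisfies the ODE $v'(s) = g'(s) - \theta v(s)$ with terminal condition $v(t)=g(t)$, and then solving explicitly for the three polynomial right-hand sides; but since only the three specific cases are needed, the direct integration by parts is the shortest route.
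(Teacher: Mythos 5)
Your proposal is correct and follows exactly the same route as the paper: the paper dismisses the case $g\equiv 1$ as straightforward and computes the other two cases by writing down the antiderivatives $-\left(\frac{u}{\theta}+\frac{1}{\theta^{2}}\right)e^{-\theta u}$ and $-\left(\frac{u^{2}}{\theta}+\frac{2u}{\theta^{2}}+\frac{2}{\theta^{3}}\right)e^{-\theta u}$, evaluating between $s$ and $t$, multiplying by $\theta e^{\theta s}$ and cancelling against $g(s)$, precisely as you describe. Your antiderivative formulas and the resulting cancellations are all correct.
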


\begin{proof}
The proof in the case $g(u)=1$ is straightforward. We compute, for
all $s,t\geq0$:
\begin{eqnarray*}
s-\theta e^{\theta s}\int_{s}^{t}ue^{-\theta u}du & = & s-\theta e^{\theta s}\left[\left(-\frac{u}{\theta}-\frac{1}{\theta^{2}}\right)e^{-\theta u}\right]_{s}^{t}\\
 & = & s-\theta\left(-\frac{t}{\theta}-\frac{1}{\theta^{2}}\right)e^{\theta(s-t)}+\theta\left(-\frac{s}{\theta}-\frac{1}{\theta^{2}}\right)\\
 & = & \left(t+\frac{1}{\theta}\right)e^{\theta(s-t)}-\frac{1}{\theta}\,,
\end{eqnarray*}
\begin{eqnarray*}
s^{2}-\theta e^{\theta s}\int_{s}^{t}u^{2}e^{-\theta u}du & = & s^{2}-\theta e^{\theta s}\left[\left(-\frac{u^{2}}{\theta}-\frac{2u}{\theta^{2}}-\frac{2}{\theta^{3}}\right)e^{-\theta u}\right]_{s}^{t}\\
 & = & s^{2}-\theta e^{\theta s}\left(-\frac{t^{2}}{\theta}-\frac{2t}{\theta^{2}}-\frac{2}{\theta^{3}}\right)e^{-\theta t}+\theta e^{\theta s}\left(-\frac{s^{2}}{\theta}-\frac{2s}{\theta^{2}}-\frac{2}{\theta^{3}}\right)e^{-\theta s}\\
 & = & \left(t^{2}+\frac{2t}{\theta}+\frac{2}{\theta^{2}}\right)e^{\theta(s-t)}-\left(\frac{2s}{\theta}+\frac{2}{\theta^{2}}\right)\,.
\end{eqnarray*}
\end{proof}
\begin{proof}[Proof of Lemma \ref{lem:calcul-var}] Lemma \ref{lem:representation-O-U}
tells us that the variables $G_{1}$, $G_{2}$, $G_{3}$, $G_{4}$
are centered Gaussians under $\mathbb{Q}$.~Using Lemma \ref{lem:integral-O-U},~we
compute the following expectations:
\begin{eqnarray*}
\E^{\q}(G_{1}^{2}) & = & \int_{0}^{1}e^{2\theta(s-1)}ds\\
 & = & \frac{1-e^{-2\theta}}{2\theta}\,,
\end{eqnarray*}
\begin{eqnarray*}
\E^{\q}(G_{2}^{2}) & = & \int_{0}^{1}\left(\left(1+\frac{1}{\theta}\right)e^{\theta(s-1)}-\frac{1}{\theta}\right)^{2}ds\\
 & = & \int_{0}^{1}\left(1+\frac{1}{\theta}\right)^{2}e^{2\theta(s-1)}+\frac{1}{\theta^{2}}-\frac{2}{\theta}\left(1+\frac{1}{\theta}\right)e^{\theta(s-1)}ds\\
 & = & \left[\left(1+\frac{1}{\theta}\right)^{2}\frac{e^{2\theta(s-1)}}{2\theta}+\frac{s}{\theta^{2}}-\left(1+\frac{1}{\theta}\right)\frac{2e^{\theta(s-1)}}{\theta^{2}}\right]_{0}^{1}\\
 & = & \left(1+\frac{1}{\theta}\right)^{2}\frac{(1-e^{-2\theta})}{2\theta}+\frac{1}{\theta^{2}}-\left(\frac{2}{\theta^{2}}+\frac{2}{\theta^{3}}\right)(1-e^{-\theta})\,,
\end{eqnarray*}
\begin{multline*}
\E^{\q}(G_{3}^{2})=\int_{0}^{1}\left(\left(1+\frac{2}{\theta}+\frac{2}{\theta^{2}}\right)e^{\theta(s-1)}-\left(\frac{2s}{\theta}+\frac{2}{\theta^{2}}\right)\right)^{2}ds=\\
\int_{0}^{1}\left(1+\frac{2}{\theta}+\frac{2}{\theta^{2}}\right)^{2}e^{2\theta(s-1)}+\left(\frac{2s}{\theta}+\frac{2}{\theta^{2}}\right)^{2}-2\left(1+\frac{2}{\theta}+\frac{2}{\theta^{2}}\right)e^{\theta(s-1)}\left(\frac{2s}{\theta}+\frac{2}{\theta^{2}}\right)ds=\\
\left[\left(1+\frac{2}{\theta}+\frac{2}{\theta^{2}}\right)^{2}\frac{e^{2\theta(s-1)}}{2\theta}+\left(\frac{2s}{\theta}+\frac{2}{\theta^{2}}\right)^{3}\frac{\theta}{6}-2\left(1+\frac{2}{\theta}+\frac{2}{\theta^{2}}\right)\left(\frac{2s}{\theta^{2}}\right)e^{\theta(s-1)}\right]_{0}^{1}=\\
\left(1+\frac{2}{\theta}+\frac{2}{\theta^{2}}\right)^{2}\frac{(1-e^{-2\theta})}{2\theta}+\left(\frac{2}{\theta}+\frac{2}{\theta^{2}}\right)^{3}\frac{\theta}{6}-\frac{8}{6\theta^{5}}-\frac{4}{\theta^{2}}\left(1+\frac{2}{\theta}+\frac{2}{\theta^{2}}\right)\,,
\end{multline*}
\begin{multline*}
\E^{\q}(G_{1}G_{2})=\int_{0}^{1}e^{\theta(s-1)}\times\left(\left(1+\frac{1}{\theta}\right)e^{\theta(s-1)}-\frac{1}{\theta}\right)ds\\
=\left[\left(1+\frac{1}{\theta}\right)\frac{e^{2\theta(s-1)}}{2\theta}-\frac{e^{\theta(s-1)}}{\theta^{2}}\right]_{0}^{1}=\left(\frac{1}{2\theta}+\frac{1}{2\theta^{2}}\right)(1-e^{-2\theta})+\frac{e^{-\theta}-1}{\theta^{2}}\,,
\end{multline*}
\begin{multline*}
\E^{\q}(G_{1}G_{3})=\int_{0}^{1}e^{\theta(s-1)}\left(\left(1+\frac{2}{\theta}+\frac{2}{\theta^{2}}\right)e^{\theta(s-1)}-\left(\frac{2s}{\theta}+\frac{2}{\theta^{2}}\right)\right)ds\\
=\left[\left(1+\frac{2}{\theta}+\frac{2}{\theta^{2}}\right)\frac{e^{2\theta(s-1)}}{2\theta}-\frac{2s}{\theta^{2}}e^{\theta(s-1)}\right]_{0}^{1}=\left(\frac{1}{2\theta}+\frac{1}{\theta^{2}}+\frac{1}{\theta^{3}}\right)(1-e^{-2\theta})-\frac{2}{\theta^{2}}\,,
\end{multline*}
\begin{multline*}
\E^{\q}(G_{2}G_{3})=\int_{0}^{1}\left(\left(1+\frac{1}{\theta}\right)e^{\theta(s-1)}-\frac{1}{\theta}\right)\times\left(\left(1+\frac{2}{\theta}+\frac{2}{\theta^{2}}\right)e^{\theta(s-1)}-\left(\frac{2s}{\theta}+\frac{2}{\theta^{2}}\right)\right)ds\\
=\int_{0}^{1}\left(1+\frac{1}{\theta}\right)\left(1+\frac{2}{\theta}+\frac{2}{\theta^{2}}\right)e^{2\theta(s-1)}-\frac{1}{\theta}\left(1+\frac{2}{\theta}+\frac{2}{\theta^{2}}\right)e^{\theta(s-1)}\\
-\left(1+\frac{1}{\theta}\right)e^{\theta(s-1)}\left(\frac{2s}{\theta}+\frac{2}{\theta^{2}}\right)+\frac{1}{\theta}\left(\frac{2s}{\theta}+\frac{2}{\theta^{2}}\right)ds\\
=\left[\left(1+\frac{1}{\theta}\right)\left(1+\frac{2}{\theta}+\frac{2}{\theta^{2}}\right)\frac{e^{2\theta(s-1)}}{2\theta}-\left(\frac{1}{\theta}+\frac{2}{\theta^{2}}+\frac{2}{\theta^{3}}\right)\frac{e^{\theta(s-1)}}{\theta}\right.\\
\left.-\left(1+\frac{1}{\theta}\right)\frac{2s}{\theta^{2}}e^{\theta(s-1)}+\left(\frac{s^{2}}{\theta^{2}}+\frac{2s}{\theta^{3}}\right)\right]_{0}^{1}
\end{multline*}
\end{proof}

\begin{proof}[Proof of Lemma \ref{lem:asymptotics-of-A-B-C}]The coefficient
of $x^{2}$ in $P$ is 
\begin{multline}
-A_{2}(\theta)=-h\frac{\theta}{6}+\frac{\sigma_{1}^{2}}{2}h\theta^{3}\left(-\frac{\alpha}{3}-\frac{\beta}{2}+a\right)^{2}\\
+\frac{\sigma_{2}^{2}}{2}h\theta^{3}\left(b-\frac{\gamma}{2}-\frac{\theta^{2}\alpha\gamma}{2}\sigma_{1}^{2}\left(-\frac{\alpha}{3}-\frac{\beta}{2}+a\right)\right)^{2}+\frac{1}{2}h\theta^{3}c^{2}\,.\label{eq:A2}
\end{multline}
 We compute (using \cite{mathematica} software):
\begin{equation}
\alpha=\frac{1}{\sqrt{2\theta}}+o\left(\frac{1}{\theta^{n}}\right)\,,\,\forall n\geq1\,,\label{eq:dl-alpha}
\end{equation}
\begin{equation}
\cov^{\q}(G_{1},G_{3})=\frac{1}{2\theta}-\frac{1}{\theta^{2}}+\frac{1}{\theta^{3}}+o\left(\frac{1}{\theta^{3}}\right)\,,\label{eq:dl-C13}
\end{equation}
\begin{equation}
\beta=\frac{1}{\sqrt{2\theta}}-\frac{\sqrt{2}}{\theta^{3/2}}+\frac{\sqrt{2}}{\theta^{5/2}}+o\left(\frac{1}{\theta^{5/2}}\right)\,,\label{eq:dl-beta}
\end{equation}
\begin{equation}
\beta^{2}=\frac{1}{2\theta}-\frac{2}{\theta^{2}}+\frac{4}{\theta^{3}}-\frac{4}{\theta^{4}}+o\left(\frac{1}{\theta^{4}}\right)\,,\label{eq:dl-beta2}
\end{equation}
\begin{equation}
\var^{\q}(G_{3})=\frac{1}{2\theta}-\frac{2}{3\theta^{2}}+o\left(\frac{1}{\theta^{4}}\right)\,,\label{eq:dl-V3}
\end{equation}
\begin{equation}
\gamma=\frac{2}{\theta\sqrt{3}}-\frac{\sqrt{3}}{\theta^{2}}+\frac{\sqrt{3}}{4\theta^{3}}+\frac{3\sqrt{3}}{8\theta^{4}}+o\left(\frac{1}{\theta^{4}}\right)\,,\label{eq:dl-gamma}
\end{equation}
\begin{equation}
\cov^{\q}(G_{1},G_{2})=\frac{1}{2\theta}-\frac{1}{2\theta^{2}}+o\left(\frac{1}{\theta^{3}}\right)\,,\label{eq:dl-C12}
\end{equation}
\begin{equation}
\sigma_{1}^{2}=\frac{6}{\theta}+\frac{18}{\theta^{2}}+\frac{18}{\theta^{3}}-\frac{54}{\theta^{4}}+o\left(\frac{1}{\theta^{4}}\right)\,,\label{eq:dl-sigma12}
\end{equation}
\begin{equation}
a=\frac{1}{\sqrt{2\theta}}-\frac{1}{\theta^{3/2}\sqrt{2}}+o\left(\frac{1}{\theta^{5/2}}\right)\,,\label{eq:dl-a}
\end{equation}
\begin{equation}
\var^{\q}(G_{2})=\frac{1}{2\theta}-\frac{3}{2\theta^{3}}+o\left(\frac{1}{\theta^{3}}\right)\,,\label{eq:dl-V2}
\end{equation}
\begin{equation}
b=\frac{\sqrt{3}}{2\theta}-\frac{\sqrt{3}}{4\theta^{2}}-\frac{9\sqrt{3}}{16\theta^{3}}+o\left(\frac{1}{\theta^{3}}\right)\,,\label{eq:dl-b}
\end{equation}

\begin{equation}
c^{2}=\frac{1}{4\theta^{2}}-\frac{5}{4\theta^{3}}+o\left(\frac{1}{\theta^{3}}\right)\,,\label{eq:dl-c2}
\end{equation}
\begin{equation}
\sigma_{2}^{2}=\frac{1}{3\theta^{2}}-\frac{1}{\theta}+2+o\left(\frac{1}{\theta^{3}}\right)\,.\label{eq:dl-sigma22}
\end{equation}
From which we deduce
\[
b-\frac{\gamma}{2}-\frac{\theta^{2}\alpha\gamma}{2}\sigma_{1}^{2}\left(-\frac{\alpha}{3}-\frac{\beta}{2}+a\right)=\frac{\sqrt{3}}{2\theta^{3}}+o\left(\frac{1}{\theta^{3}}\right)\,,
\]
\[
\frac{\sigma_{2}^{2}}{2}h\theta^{3}\left(b-\frac{\gamma}{2}-\frac{\theta^{2}\alpha\gamma}{2}\sigma_{1}^{2}\left(-\frac{\alpha}{3}-\frac{\beta}{2}+a\right)\right)^{2}\underset{\theta\rightarrow+\infty}{\longrightarrow}0\,,
\]
\[
\frac{1}{2}h\theta^{3}c^{2}=h\left(\frac{\theta}{8}-\frac{5}{8}+o(1)\right)\,,
\]
\[
\frac{\sigma_{1}^{2}}{2}h\theta^{3}\left(-\frac{\alpha}{3}-\frac{\beta}{2}+a\right)^{2}=h\left(\frac{\theta}{24}+\frac{1}{8}+o(1)\right)\,,
\]
\begin{equation}
A_{2}(\theta)\underset{\theta\rightarrow+\infty}{\longrightarrow}\frac{h}{2}\,.\label{eq:lim-Cx}
\end{equation}
The coefficient of $z^{2}$ in $P$ is
\begin{equation}
-B_{2}(\theta)=-h\frac{\theta}{6}+\frac{\sigma_{1}^{2}}{2}h\theta^{3}\left(-\frac{\alpha}{6}+\frac{\beta}{2}\right)^{2}+\frac{\sigma_{2}^{2}}{2}h\theta^{3}\left(\frac{\gamma}{2}-\frac{\theta^{2}\alpha\gamma}{2}\sigma_{1}^{2}\left(-\frac{\alpha}{6}+\frac{\beta}{2}\right)\right)^{2}\,.\label{eq:def-Cz2}
\end{equation}
We have:
\[
\frac{\sigma_{2}^{2}}{2}h\theta^{3}\left(\frac{\gamma}{2}-\frac{\theta^{2}\alpha\gamma}{2}\sigma_{1}^{2}\left(-\frac{\alpha}{6}+\frac{\beta}{2}\right)\right)^{2}\underset{\theta\rightarrow+\infty}{\longrightarrow}0\,,
\]
\[
\frac{\sigma_{1}^{2}}{2}h\theta^{3}\left(-\frac{\alpha}{6}+\frac{\beta}{2}\right)^{2}=h\left(\frac{\theta}{6}-\frac{1}{2}+o(1)\right)\,,
\]
\begin{equation}
B_{2}(\theta)\underset{\theta\rightarrow+\infty}{\longrightarrow}\frac{h}{2}\,.\label{eq:lim-Cz}
\end{equation}
The coefficient of $xz$ in $P$ is 
\begin{multline}
C_{1}(\theta)=-h\frac{\theta}{6}+\sigma_{1}^{2}h\theta^{3}\left(-\frac{\alpha}{3}-\frac{\beta}{2}+a\right)\left(-\frac{\alpha}{6}+\frac{\beta}{2}\right)\\
+\sigma_{2}^{2}h\theta^{3}\left(b-\frac{\gamma}{2}-\frac{\theta^{2}\alpha\gamma}{2}\sigma_{1}^{2}\left(-\frac{\alpha}{3}-\frac{\beta}{2}+a\right)\right)\left(\frac{\gamma}{2}-\frac{\theta^{2}\alpha\gamma}{2}\sigma_{1}^{2}\left(-\frac{\alpha}{6}+\frac{\beta}{2}\right)\right)\,\label{eq:def-C1}
\end{multline}
(it does not depend on $y_{0:\tau}$). We have:
\[
\sigma_{2}^{2}h\theta^{3}\left(b-\frac{\gamma}{2}-\frac{\theta^{2}\alpha\gamma}{2}\sigma_{1}^{2}\left(-\frac{\alpha}{3}-\frac{\beta}{2}+a\right)\right)\left(\frac{\gamma}{2}-\frac{\theta^{2}\alpha\gamma}{2}\sigma_{1}^{2}\left(-\frac{\alpha}{6}+\frac{\beta}{2}\right)\right)\underset{\theta\rightarrow+\infty}{\longrightarrow}0\,,
\]
\[
\sigma_{1}^{2}h\theta^{3}\left(-\frac{\alpha}{3}-\frac{\beta}{2}+a\right)\left(-\frac{\alpha}{6}+\frac{\beta}{2}\right)=h\left(\frac{\theta}{6}-\frac{3}{2\theta}+o\left(\frac{1}{\theta}\right)\right)\,,
\]
\begin{equation}
C_{1}(\theta)=\frac{3h}{2\theta}+o\left(\frac{1}{\theta}\right)\,.\label{eq:lim-Cxz}
\end{equation}
\end{proof}

We need the following Lemma before going into the proof of Lemma \ref{lem:variation-const}.
\begin{lem}
\label{lem:variation-Y}For all $k\in\N,$ $s\in[k,k+1]$ 
\[
\left|Y_{\tau s}-\int_{k}^{k+1}Y_{\tau u}du-Y_{\tau(s+1)}+\int_{k}^{k+1}Y_{\tau(u+1)}du\right|\preceq h\tau^{2}M+h\tau\mathcal{V}_{k\tau,(k+1)\tau}+\mathcal{W}_{k\tau,(k+1)\tau}\,,
\]
\[
\left|\int_{0}^{1}\frac{e^{-\theta}\sinh(\theta s)}{\theta}dY_{\tau k+\tau s}-\int_{0}^{1}\frac{e^{-\theta}\sinh(\theta s)}{\theta}dY_{\tau(k+1)+\tau s}\right|\preceq\frac{M\tau^ {}+\mathcal{V}_{k\tau,(k+1)\tau}+\mathcal{W}_{k\tau,(k+1)\tau}}{\theta}\,.
\]
And, for all $s\in[0,1]$,
\[
\left|Y_{\tau s}-\int_{0}^{1}Y_{\tau u}du\right|\preceq h\tau^{2}M+h\tau\mathcal{V}_{0,\tau}+\mathcal{W}_{0,\tau}\,,
\]
\[
\left|\int_{0}^{1}\frac{e^{-\theta}\sinh(\theta s)}{\theta}dY_{\tau s}\right|\preceq\frac{M\tau+\mathcal{V}_{0,\tau}+\mathcal{W}_{0,\tau}}{\theta}\,.
\]
\end{lem}

\begin{proof}
We write the proof only for the first two formulas. We have, for all
$k\in\N$, $s\in[k,k+1]$,
\begin{multline*}
\left|Y_{\tau s}-\int_{k}^{k+1}Y_{\tau u}du-Y_{\tau(s+1)}+\int_{k}^{k+1}Y_{\tau(u+1)}du\right|\\
=\left|-\int_{\tau s}^{\tau(s+1)}hX_{u}du+W_{\tau s}-W_{\tau(s+1)}-\int_{k}^{k+1}\left(-h\int_{\tau u}^{\tau(u+1)}X_{v}dv+W_{\tau u}-W_{\tau(u+1)}\right)du\right|\\
\preceq\left|\int_{k}^{k+1}h\left(\int_{\tau s}^{\tau(s+1)}X_{v}dv-\int_{\tau u}^{\tau(u+1)}X_{v}dv\right)du\right|+\mathcal{\mathcal{W}}_{k\tau,(k+2)\tau}\\
=\left|h\int_{k}^{k+1}\left(\int_{\tau s}^{\tau(s+1)}X_{v}-X_{v+\tau(u-s)}dv\right)du\right|+\mathcal{\mathcal{W}}_{k\tau,(k+2)\tau}\\
=\left|h\int_{k}^{k+1}\left(\int_{\tau s}^{\tau(s+1)}\int_{v+\tau(u-s)}^{v}f(X_{t})dt+V_{v}-V_{v+\tau(u-s)}dvdu\right)\right|+\mathcal{\mathcal{W}}_{k\tau,(k+2)\tau}\\
\leq h\tau^{2}M+h\tau\mathcal{V}_{k\tau,(k+2)\tau}+\mathcal{\mathcal{W}}_{k\tau,(k+2)\tau}\,,
\end{multline*}
and (using integration by parts)
\begin{multline*}
\left|\int_{0}^{1}\frac{e^{-\theta}\sinh(\theta s)}{\theta}dY_{\tau k+\tau s}-\int_{0}^{1}\frac{e^{-\theta}\sinh(\theta s)}{\theta}dY_{\tau(k+1)+\tau s}\right|\\
=\left|\int_{0}^{\tau}\frac{e^{-\theta}\sinh(hs)}{\theta}(h(X_{\tau k+s}-X_{\tau(k+1)+s})ds+dW_{\tau k+s}-dW_{\tau(k+1)+s})\right|\\
\leq\left|\int_{0}^{\tau}\frac{e^{-\theta}\sinh(hs)}{\theta}h(X_{\tau k+s}-X_{\tau(k+1)+s})ds\right|\\
+\left|\frac{e^{-\theta}\sinh(\theta)}{\theta}(W_{\tau(k+1)}-W_{\tau(k+2)})\right|\\
+\left|\int_{0}^{\tau}(W_{\tau k+s}-W_{\tau(k+1)+s})\frac{e^{-\theta}\cosh(hs)}{\tau}ds\right|\\
\preceq\int_{0}^{\tau}\frac{e^{-\theta}\sinh(hs)}{\theta}h(\tau M+\mathcal{V}_{k\tau,(k+2)\tau})ds+\frac{e^{-\theta}\sinh(\theta)}{\theta}\mathcal{W}_{k\tau,(k+2)\tau}+\int_{0}^{\tau}\mathcal{W}_{k\tau,(k+2)\tau}\frac{e^{-\theta}\cosh(hs)}{\tau}ds\\
\preceq\frac{h(M\tau+\mathcal{V}_{k\tau,(k+2)\tau})}{h\theta}+\frac{\mathcal{W}_{k\tau,(k+2)\tau}}{\theta}\,.
\end{multline*}
\end{proof}
\begin{proof}[Proof of Lemma \ref{lem:variation-const}]We write the
proof in the case $k=0$. From (\ref{eq:psi-chapeau-01}), (\ref{eq:psi-chapeau-05}),
we deduce
\begin{multline}
B_{1}(Y_{0:\tau},\theta)=-\sigma_{1}^{2}h\theta^{2}\left(-\frac{\alpha}{6}+\frac{\beta}{2}\right)\lambda_{1}(Y_{0:\tau})+h\int_{0}^{1}sdY_{\tau s}\\
+\sigma_{2}^{2}h\theta^{2}\left(\frac{\gamma}{2}-\frac{\theta^{2}\alpha\gamma\sigma_{1}^{2}}{2}\left(-\frac{\alpha}{6}+\frac{\beta}{2}\right)\right)\left(-\lambda_{2}(Y_{0:\tau})+\frac{\theta^{2}\alpha\gamma\sigma_{1}^{2}}{2}\lambda_{1}(Y_{0:\tau})\right)\,.\label{eq:def-Cz}
\end{multline}
For further use, we also write the formula for $A_{1}(Y_{0:\tau},\theta)$:
\begin{multline}
A_{1}(Y_{0:\tau},\theta)=h\int_{0}^{1}(1-s)dY_{s\tau}+\sigma_{1}^{2}h\theta^{2}\left(\frac{\alpha}{3}+\frac{\beta}{2}-a\right)\lambda_{1}(Y_{0:\tau})\\
+\sigma_{2}^{2}h\theta^{2}\left(b-\frac{\gamma}{2}-\frac{\theta^{2}\alpha\gamma\sigma_{1}^{2}}{2}\left(-\frac{\alpha}{3}-\frac{\beta}{2}+a\right)\right)\left(-\lambda_{2}(Y_{0:\tau})+\frac{\theta^{2}\alpha\gamma\sigma_{1}^{2}}{2}\lambda_{1}(Y_{0:\tau})\right)-h\theta^{2}\lambda_{3}(Y_{0:\tau})c.\label{eq:def-Cx}
\end{multline}
 We have to remember here that $\lambda_{1}$, $\lambda_{2}$, $\lambda_{3}$
are functions of $y_{0:\tau}$. So we might write $\lambda_{1}(y_{0:\tau})$,
\ldots{} to stress this dependency (and the same goes for other quantities).
From Lemmas \ref{lem:representation-O-U}, \ref{lem:integral-O-U},
we get ($g_{1}$, $g_{2}$ defined below)
\begin{eqnarray}
\cov^{\q}(G_{1},G_{4})(Y_{0:\tau}) & = & \int_{0}^{1}e^{\theta(s-1)}\times\left(g_{1}(s)-\theta e^{\theta s}\int_{s}^{1}e^{-\theta u}g_{1}(u)du\right)ds\nonumber \\
 & = & \int_{0}^{1}g_{1}(s)e^{\theta(s-1)}ds-\int_{0}^{1}e^{-\theta u}g_{1}(u)\int_{0}^{u}\theta e^{2\theta s-\theta}dsdu\nonumber \\
 & = & \int_{0}^{1}g_{1}(s)e^{\theta(s-1)}ds-\int_{0}^{1}e^{-\theta u}g_{1}(u)\left(\frac{e^{2\theta u}-1}{2}\right)e^{-\theta}du\nonumber \\
 & = & \int_{0}^{1}g_{1}(s)e^{-\theta}\cosh(\theta s)ds\,,\label{eq:cov14-a}
\end{eqnarray}
\begin{equation}
\cov^{\q}(G_{1},G_{4})(Y_{\tau:2\tau})=\int_{0}^{1}g_{2}(s)e^{-\theta}\cosh(\theta s)ds\,,\label{eq:cov14-b}
\end{equation}
with 
\[
g_{1}(s)=Y_{\tau s}-\int_{0}^{1}Y_{\tau u}du\,,\,g_{2}(s)=Y_{\tau(s+1)}-\int_{0}^{1}Y_{\tau(u+1)}du\,,
\]
and
\begin{multline}
\cov(G_{3},G_{4})(Y_{0:\tau})=\int_{0}^{1}\left(\left(1+\frac{2}{\theta}+\frac{2}{\theta^{2}}\right)e^{\theta(s-1)}-\left(\frac{2s}{\theta}+\frac{2}{\theta^{2}}\right)\right)\\
\times\left(g_{1}(s)-\theta e^{\theta s}\int_{s}^{1}e^{-\theta u}g_{1}(u)du\right)ds\\
=\left(1+\frac{2}{\theta}+\frac{2}{\theta^{2}}\right)\cov(G_{1},G_{4})(Y_{0:\tau})-\int_{0}^{1}\frac{2s}{\theta}g_{1}(s)ds\\
+\int_{0}^{1}e^{-\theta u}g_{1}(u)\int_{0}^{u}\left(2s+\frac{2}{\theta}\right)e^{\theta s}dsdu\\
=\left(1+\frac{2}{\theta}+\frac{2}{\theta^{2}}\right)\cov(G_{1},G_{4})(Y_{0:\tau})-\int_{0}^{1}\frac{2s}{\theta}g_{1}(s)ds\\
+\int_{0}^{1}e^{-\theta u}g_{1}(u)\frac{2u}{\theta}e^{\theta u}du\\
=\left(1+\frac{2}{\theta}+\frac{2}{\theta^{2}}\right)\cov(G_{1},G_{4})(Y_{0:\tau})\,,\label{eq:cov34-a}
\end{multline}
\begin{equation}
\cov(G_{3},G_{4})(Y_{\tau:2\tau})=\left(1+\frac{2}{\theta}+\frac{2}{\theta^{2}}\right)\cov(G_{1},G_{4})(Y_{\tau:2\tau})\,.\label{eq:cov34-b}
\end{equation}

From (\ref{eq:def-lambdas}), (\ref{eq:dl-alpha})-(\ref{eq:dl-sigma22}),
we deduce (using again \cite{mathematica})
\[
-\sigma_{1}^{2}h\theta^{2}\left(-\frac{\alpha}{6}+\frac{\beta}{2}\right)\lambda_{1}(Y_{0:\tau})=-h(2\theta+O(1))\cov^{\q}(G_{1},G_{4})(Y_{0:\tau})\,,
\]
\[
\sigma_{2}^{2}h\theta^{2}\left(\frac{\gamma}{2}-\frac{\theta^{2}\alpha\gamma\sigma_{1}^{2}}{2}\left(-\frac{\alpha}{6}+\frac{\beta}{2}\right)\right)=hO\left(\theta\right)\,,
\]
\begin{eqnarray}
-\lambda_{2}(Y_{0:\tau})+\frac{\theta^{2}\alpha\gamma\sigma_{1}^{2}}{2}\lambda_{1}(Y_{0:\tau}) & = & \cov^{\q}(G_{1},G_{4})(Y_{0:\tau})\left(-\frac{1}{\gamma}\left(1+\frac{2}{\theta}+\frac{2}{\theta^{2}}-\frac{\beta}{\alpha}\right)+\frac{\beta}{\alpha\gamma}+\frac{\theta^{2}\gamma\sigma_{1}^{2}}{2}\right)\nonumber \\
 & = & \cov^{\q}(G_{1},G_{4})(Y_{0:\tau})\times O\left(\frac{1}{\theta}\right)\,.\label{eq:Cx-part-02}
\end{eqnarray}
So we get
\begin{multline}
-\sigma_{1}^{2}h\theta^{2}\left(-\frac{\alpha}{6}+\frac{\beta}{2}\right)\lambda_{1}(Y_{0:\tau})+\sigma_{2}^{2}h\theta^{2}\left(\frac{\gamma}{2}-\frac{\theta^{2}\alpha\gamma\sigma_{1}^{2}}{2}\left(-\frac{\alpha}{6}+\frac{\beta}{2}\right)\right)\left(-\lambda_{2}(Y_{0:\tau})+\frac{\theta^{2}\alpha\gamma\sigma_{1}^{2}}{2}\lambda_{1}(Y_{0:\tau})\right)\\
=-2h(\theta+O(1))\cov^{\q}(G_{1},G_{4})(Y_{0:\tau})\,.\label{eq:borne-B-01}
\end{multline}
We have
\begin{eqnarray}
\cov^{\q}(G_{1},G_{4})(Y_{0:\tau}) & = & \int_{0}^{1}\left(Y_{\tau s}-\int_{0}^{1}Y_{\tau u}du\right)e^{-\theta}\cosh(\theta s)ds\nonumber \\
 & = & \left(Y_{\tau}-\int_{0}^{1}Y_{\tau u}du\right)\frac{e^{-\theta}\sinh(\theta)}{\theta}-\int_{0}^{1}\frac{e^{-\theta}\sinh(\theta s)}{\theta}dY_{\tau s}\nonumber \\
 & = & \int_{0}^{1}sdY_{\tau s}\times\frac{e^{-\theta}\sinh(\theta)}{\theta}-\int_{0}^{1}\frac{e^{-\theta}\sinh(\theta s)}{\theta}dY_{\tau s}\,,\label{eq:borne-B-02}
\end{eqnarray}
and so
\begin{multline}
-2h(\theta+O(1))\cov^{\q}(G_{1},G_{4})(Y_{0:\tau})+h\int_{0}^{1}sdY_{\tau s}=-2h(\theta+O(1))\left(\int_{0}^{1}sdY_{\tau s}\right)\left(\frac{1}{2\theta}-\frac{e^{-2\theta}}{2\theta}\right)\\
+h\int_{0}^{1}sdY_{\tau s}+h(\theta+O(1))\times2\int_{0}^{1}\frac{e^{-\theta}\sinh(\theta s)}{\theta}dY_{\tau s}\\
=h\left(\int_{0}^{1}sdY_{\tau s}\right)\times O\left(\frac{1}{\theta}\right)-h(\theta+O(1))\times2\int_{0}^{1}\frac{e^{-\theta}\sinh(\theta s)}{\theta}dY_{\tau s}\label{eq:borne-B-03}
\end{multline}
And so, using Lemma \ref{lem:variation-Y}, Equations (\ref{eq:borne-B-01}),
(\ref{eq:borne-B-02}), (\ref{eq:borne-B-03}) (as similar formulas
of the ones above are valid if we replace $Y_{0:\tau}$ by $Y_{0:2\tau}$),
we get
\begin{eqnarray*}
|B_{1}(Y_{0:\tau},\theta)-B_{1}(Y_{\tau:2\tau},\theta)| & \preceq & \frac{1}{\tau}(h\tau^{2}M+h\tau\mathcal{V}_{0,2\tau}+\mathcal{W}_{0,2\tau})+h\theta\left(\frac{M\tau+\mathcal{V}_{0,2\tau}+\mathcal{W}_{0,2\tau}}{\theta}\right)\\
 & \preceq & Mh\tau+h\mathcal{V}_{0,2\tau}+(h+\frac{1}{\tau})\mathcal{W}_{0,2\tau}\,.
\end{eqnarray*}
\end{proof}

\subsection{Technical Lemmas used in Section \ref{sec:Robust-approximation-of}}
\begin{lem}
\label{lem:lambda-integral}We have $\lambda_{1}=\int_{0}^{\tau}f_{1}(s)dy_{\tau s}$
for some deterministic function $f_{1}$. And the same is true for
$\lambda_{2}$, $\lambda_{3}$. 
\end{lem}

\begin{proof}
We write the proof only for $\lambda_{1}$. Using Equation (\ref{eq:def-lambdas})
and Lemmas \ref{lem:representation-O-U}, \ref{lem:integral-O-U}
and integrations by parts, we get
\begin{eqnarray*}
\alpha\lambda_{1} & = & \cov^{\q}(G_{1},G_{4})\\
 & = & \cov^{\q}\left(B_{1},\int_{0}^{1}y_{\tau s}-\left(\int_{0}^{1}y_{\tau u}du\right)dB_{s}\right)\\
 & = & \cov^{\q}\left(\int_{0}^{1}e^{\theta(s-1)}d\beta_{s},\int_{0}^{1}y_{\tau s}-\left(\int_{0}^{1}y_{\tau u}du\right)-\theta e^{\theta s}\int_{s}^{1}e^{-\theta u}\left(y_{\tau u}-\left(\int_{0}^{1}y_{\tau v}dv\right)\right)dud\beta_{s}\right)\\
 & = & \int_{0}^{1}e^{\theta(s-1)}\left(y_{\tau s}-\left(\int_{0}^{1}y_{\tau u}du\right)-\theta e^{\theta s}\int_{s}^{1}e^{-\theta u}\left(y_{\tau u}-\left(\int_{0}^{1}y_{\tau v}dv\right)\right)du\right)ds\\
 & = & \int_{0}^{1}e^{\theta(s-1)}y_{\tau s}ds-\int_{0}^{1}e^{\theta(s-1)}ds\times\int_{0}^{1}y_{\tau u}du-\int_{0}^{1}e^{-\theta u}y_{\tau u}\int_{u}^{1}\theta e^{\theta(2s-1)}dsdu\\
 &  & \,\,\,\,\,\,\,\,-\int_{0}^{1}\theta e^{\theta(2s-1)}\frac{(e^{-\theta s}-e^{-\theta})}{\theta}ds\times\int_{0}^{1}y_{\tau u}du\\
\text{} & = & \frac{y_{\tau}}{\theta}-\int_{0}^{1}\frac{e^{\theta(s-1)}}{\theta}dy_{\tau s}-\frac{(1-e^{-\theta})}{\theta}\left(y_{\tau}-\int_{0}^{1}udy_{\tau u}\right)-\int_{0}^{1}y_{\tau u}\sinh(\theta(1-u))du\\
 &  & \,\,\,\,\,\,\,-\left(\frac{1-e^{-\theta}}{\theta}-\frac{(1-e^{-2\theta})}{2\theta}\right)\left(y_{\tau}-\int_{0}^{1}udy_{\tau u}\right)\\
 & = & \frac{y_{\tau}}{\theta}-\int_{0}^{1}\frac{e^{\theta(s-1)}}{\theta}dy_{\tau s}-\frac{(1-e^{-\theta})}{\theta}\left(y_{\tau}-\int_{0}^{1}udy_{\tau u}\right)+\frac{y_{\tau}}{\theta}-\int_{0}^{1}\frac{\cosh(\theta(1-u))}{\theta}dy_{\tau u}\\
 &  & \,\,\,\,\,\,\,-\left(\frac{1-e^{-\theta}}{\theta}-\frac{(1-e^{-2\theta})}{2\theta}\right)\left(y_{\tau}-\int_{0}^{1}udy_{\tau u}\right)\,.
\end{eqnarray*}
\end{proof}
\begin{lem}
\label{lem:A-as-integral}We can write 
\[
A_{1}^{Y_{0:\tau}}=\int_{0}^{\tau}(f_{1}(s)dW_{s}+f_{2}(s))dX_{s}
\]
for some deterministic functions $f_{1}$, $f_{2}$.
\end{lem}

\begin{proof}
From Lemma \ref{lem:lambda-integral} and Equation (\ref{eq:def-Cx}),
we know there exists a deterministic function $g_{1}$ such that 
\[
A_{1}^{Y_{0:\tau}}=\int_{0}^{1}g_{1}(s)dY_{\tau s}\,.
\]
So, integrating by parts, we get (where $G$ is the antiderivative
of $g_{1}$ such that $G(0)=0$)
\begin{eqnarray*}
A_{1}^{Y_{0:\tau}} & = & \int_{0}^{1}g_{1}(s)(hX_{\tau s}\tau ds+dW_{\tau s})\\
 & = & G(1)hX_{\tau}\tau-\int_{0}^{1}G(s)h\tau dX_{\tau s}+\int_{0}^{1}g_{1}(s)dW_{\tau s}\\
 & = & \int_{0}^{\tau}\left(G(1)h-G\left(\frac{s}{\tau}\right)h\right))dX_{s}+\int_{0}^{\tau}g_{1}\left(\frac{s}{\tau}\right)dW_{s}\,.
\end{eqnarray*}
\end{proof}
\begin{lem}
\label{lem:tech-03}For any measurable $\mathcal{B}$, subset of $\R^{2}$
and $\Phi$ a Gaussian density,
\begin{multline*}
\int_{(t_{1},t_{2})\in\mathcal{B}}\Phi(t_{1},t_{2})\Psi_{1}(t_{1},t_{2})dt_{1}dt_{2}\\
\leq\int_{\mathcal{B}}\Phi(t_{1},t_{2})C_{1}'(h,\tau)\exp\left(2M|(1,-1)\kappa^{-1}(t_{1},t_{2})^{T}|+\tau(2M+M^{2})\right)dt_{1}dt_{2}
\end{multline*}
(the constant $C_{1}'(h,\tau)$ coming from Lemma \ref{lem:borne-Psi_1}).
\end{lem}

\begin{proof}
For any $\epsilon'<\epsilon$ ($\epsilon$ comes from Lemma \ref{lem:borne-Psi_1}),
we can write $\mathcal{B}=\sqcup_{i\in I}\mathcal{B}_{i}$, where
$I\subset\N,$ $\sqcup$ means that this is a partition, and for all
$i$, the set $\mathcal{B}_{i}$ is a subset of $B(2(x_{i},z_{i})\kappa,\epsilon')$
(for some $x_{i}$, $z_{i}$). For all $i$, for almost all $(t_{1},t_{2})$
in $B(2(x_{i},z_{i})\kappa,\epsilon')$, we have, by Lemma \ref{lem:borne-Psi_1},
\begin{eqnarray*}
\Psi_{1}(t_{1},t_{2}) & \leq & C_{1}'(h,\tau)\exp(4M|x_{i}-z_{i}|+\tau(2M+M^{2}))\\
 & = & C_{1}'(h,\tau)\exp(4M|(x_{i},z_{i})(1,-1)^{T}|+\tau(2M+M^{2}))\\
 & = & C_{1}'(h,\tau)\exp(4M|2(x_{i},z_{i})\kappa\frac{1}{2}\kappa^{-1}(1,-1)^{T}|+\tau(2M+M^{2}))\\
 & \leq & C_{1}'(h,\tau)\exp(4M|(t_{1},t_{2})\frac{1}{2}\kappa^{-1}(1,-1)^{T}|+C'\epsilon'+\tau(2M+M^{2}))\,,
\end{eqnarray*}
for some constant $C'$. So we have
\begin{multline*}
\int_{(t_{1},t_{2})\in\mathcal{B}}\Phi(t_{1},t_{2})\Psi_{1}(t_{1},t_{2})dt_{1}dt_{2}=\sum_{i\in I}\int_{(t_{1},t_{2})\in\mathcal{B}_{i}}\Phi(t_{1},t_{2})\Psi_{1}(t_{1},t_{2})dt_{1}dt_{2}\\
\leq\sum_{i\in I}\int_{(t_{1},t_{2})\in\mathcal{B}_{i}}\Phi(t_{1},t_{2})C_{1}'(h,\tau)\exp(4M|(t_{1},t_{2})\frac{1}{2}\kappa^{-1}(1,-1)^{T}|+C'\epsilon'+\tau(2M+M^{2}))dt_{1}dt_{2}\\
=\int_{(t_{1},t_{2})\in\mathcal{B}}\Phi(t_{1},t_{2})C_{1}'(h,\tau)\exp(4M|(t_{1},t_{2})\frac{1}{2}\kappa^{-1}(1,-1)^{T}|+C'\epsilon'+\tau(2M+M^{2}))dt_{1}dt_{2}\,.
\end{multline*}
And by taking $\epsilon'\rightarrow0$, we get the desired result.
\end{proof}

\subsection{Technical Lemma used in Section \ref{sec:Stability-results}}
\begin{lem}
\label{lem:comparaison-queues}Suppose that $f_{1}$ and $f_{2}$
are two probability densities on $\R_{+}$ and that $\varphi$ is
a nondecreasing function from $\R_{+}$ to $\R_{+}$. Suppose the
distribution functions $F_{1}$ an $F_{2}$ (associated respectively
to $f_{1}$, $f_{2}$) are such that 
\[
1-F_{1}(x)\leq K(1-F_{2}(x))\text{ , for all }x\geq0\,,
\]
for some constant $K\geq1$. Then, for $X_{1}$ a random variable
of density $f_{1}$ and $X_{2}$ a random variable of density $f_{2}$,
we have
\[
\E(\varphi(X_{1}))\leq K\E(\varphi(X_{2}))\,.
\]
\end{lem}

\begin{proof}
Let $U$ be a random variable of uniform law on $[0,1]$. Let $m$
be such that $F_{2}(m)=1-1/K$. Let $(K(1-F_{2}))^{-1}$ be the inverse
of $\widehat{F}_{2}\,:\,x\in[m,+\infty[\mapsto K(1-F_{2}(x))\in[0,1]$.
We have:
\begin{eqnarray*}
\E(\varphi(X_{1})) & = & \E(\varphi((1-F_{1})^{-1}(U))\\
 & \leq & \E(\varphi((K(1-F_{2}))^{-1}(U)))\\
 & = & \E(\varphi(X_{2})|X_{2}\geq m)\\
 & = & \E(\varphi(X_{2})\1_{[m,+\infty[}(X_{2}))\times K\\
 & \leq & K\E(\varphi(X_{2}))\,.
\end{eqnarray*}
\end{proof}
\textbf{THANKS}~: The authors would like to thank the following colleagues,
whose help was greatly appreciated: Dan Crisan, François Delarue.\bibliographystyle{alpha}
\bibliography{bib-article-bui}

\end{document}